\theoremstyle{plain}
\newtheorem{theorem}{Theorem}[section]
\newtheorem{lemma}[theorem]{Lemma}
\newtheorem{proposition}[theorem]{Proposition}
\theoremstyle{definition}
\newtheorem{problem}[theorem]{Problem}
\theoremstyle{remark}
\newtheorem{remark}[theorem]{Remark}
\numberwithin{figure}{section}
\numberwithin{equation}{section}
\DeclareMathOperator{\ad}{ad}
\DeclareMathOperator{\Real}{Re}
\DeclareMathOperator{\Imag}{Im}
\newenvironment{doublecases}
{
	\left\{ 
			\begin{array}{lllll}
}
{			
			\end{array} 
			\right.
}
\begin{document}

\title[Long-Time Behavior of DNLS]{Long-Time Behavior of Solutions to the Derivative Nonlinear Schr\"{o}dinger Equation for Soliton-Free Initial Data}
\author{Jiaqi Liu}
\author{Peter A. Perry}
\author{Catherine Sulem}
\address[Liu]{Department of Mathematics, University of Kentucky, Lexington, Kentucky 40506--0027}
\address[Perry]{ Department of Mathematics, University of Kentucky, Lexington, Kentucky 40506--0027}
\address[Sulem]{Department of Mathematics, University of Toronto, Toronto, Ontario M5S 2E4, Canada }
\thanks{P. Perry supported in part by a Simons Research and Travel Grant.}
\thanks {C. Sulem supported in part by NSERC Grant 46179-13}
\date{\today}

\begin{abstract}
The large-time behavior of  solutions to  the derivative nonlinear Schr\"{o}dinger equation is established for initial 
conditions in  some weighted Sobolev spaces under the assumption that the initial conditions  do not support 
solitons. Our approach uses the inverse scattering setting and the  nonlinear steepest descent method of Deift and Zhou as  
recast by Dieng  and  McLaughlin.

\medskip

\centerline{\bf{ Comportement aux temps longs des solutions de l' \'equation }}
\centerline{\bf{de Schr\"odinger nonlin\'eraire
 avec d\'eriv\'ee en l'absence de solitons}}

\medskip
On \'etablit le comportement au temps long des solutions de l'\'equation de Schr\"odinger nonlin\'eraire avec d\'eriv\'ee
dans des espaces de Sobolev \`a poids, sous l'hypoth\`ese que les conditions initiales ne supportent pas de solitons.
Notre approche utlise  l'inverse scattering et  la m\'ethode  de la plus grande pente (``steepest descent'') de Deift  et Zhou revisit\'ee par
 Dieng  et McLaughlin.

\end{abstract}

\maketitle
\tableofcontents

%
%

\newcommand{\eps}{\varepsilon}
\newcommand{\lam}{\lambda}

\newcommand{\bfN}{\mathbf{N}}
\newcommand{\calbR}{\mathcal{ \breve{R}}}
\newcommand{\rhobar}{\overline{\rho}}
\newcommand{\zetabar}{\overline{\zeta}}

\newcommand{\rarr}{\rightarrow}
\newcommand{\darr}{\downarrow}

\newcommand{\dee}{\partial}
\newcommand{\dbar}{\overline{\partial}}

\newcommand{\dint}{\displaystyle{\int}}

\newcommand{\dotarg}{\, \cdot \, }

%
%

\newcommand{\RHP}{\mathrm{PC}}			
\newcommand{\PC}{\mathrm{PC}}

%
%

\newcommand{\zbar}{\overline{z}}

\newcommand{\bbC}{\mathbb{C}}
\newcommand{\bbR}{\mathbb{R}}

\newcommand{\calB}{\mathcal{B}}
\newcommand{\calC}{\mathcal{C}}
\newcommand{\calR}{\mathcal{R}}
\newcommand{\calS}{\mathcal{S}}

\newcommand{\ba}{\breve{a}}
\newcommand{\bb}{\breve{b}}

\newcommand{\balpha}{\breve{\alpha}}
\newcommand{\brho}{\breve{\rho}}

\newcommand{\tPhi}{{\widetilde{\Phi}}}

\newcommand{\bfe}{\mathbf{e}}
\newcommand{\bfn}{\mathbf{n}}

\newcommand{\bphi}{\breve{\Phi}}
\newcommand{\bN}{\breve{N}}
\newcommand{\bV}{\breve{V}}
\newcommand{\bR}{\breve{R}}
\newcommand{\bdelta}{\breve{\delta}}
\newcommand{\bzeta}{\breve{\zeta}}
\newcommand{\bbeta}{\breve{\beta}}
\newcommand{\bm}{\breve{m}}
\newcommand{\br}{\breve{r}}
\newcommand{\bnu}{\breve{\nu}}
\newcommand{\bbfN}{\breve{\mathbf{N}}}

\newcommand{\One}{\mathbf{1}}

%
%

\newcommand{\bigO}[2][ ]
{
\mathcal{O}_{#1}
\left(
{#2}
\right)
}

\newcommand{\littleO}[1]{{o}\left( {#1} \right)}

\newcommand{\norm}[2]
{
\left\Vert		{#1}	\right\Vert_{#2}
}

%
%

\newcommand{\rowvec}[2]
{
\left(
	\begin{array}{cc}
		{#1}	&	{#2}	
	\end{array}
\right)
}

\newcommand{\uppermat}[1]
{
\left(
	\begin{array}{cc}
	0		&	{#1}	\\
	0		&	0
	\end{array}
\right)
}

\newcommand{\lowermat}[1]
{
\left(
	\begin{array}{cc}
	0		&	0	\\
	{#1}	&	0
	\end{array}
\right)
}

\newcommand{\offdiagmat}[2]
{
\left(
	\begin{array}{cc}
	0		&	{#1}	\\
	{#2}	&	0
	\end{array}
\right)
}

\newcommand{\diagmat}[2]
{
\left(
	\begin{array}{cc}
		{#1}	&	0	\\
		0		&	{#2}
		\end{array}
\right)
}

\newcommand{\Offdiagmat}[2]
{
\left(
	\begin{array}{cc}
		0			&		{#1} 	\\
		\\
		{#2}		&		0
		\end{array}
\right)
}

\newcommand{\twomat}[4]
{
\left(
	\begin{array}{cc}
		{#1}	&	{#2}	\\
		{#3}	&	{#4}
		\end{array}
\right)
}

\newcommand{\unitupper}[1]
{	
	\twomat{1}{#1}{0}{1}
}

\newcommand{\unitlower}[1]
{
	\twomat{1}{0}{#1}{1}
}

\newcommand{\Twomat}[4]
{
\left(
	\begin{array}{cc}
		{#1}	&	{#2}	\\[10pt]
		{#3}	&	{#4}
		\end{array}
\right)
}

%
%
%

\newcommand{\JumpMatrixFactors}[6]
{
	\begin{equation}
	\label{#2}
	{#1} =	\begin{cases}
					{#3} {#4}, 	&	z \in (-\infty,\xi) \\
					\\
					{#5}{#6},	&	z \in (\xi,\infty)
				\end{cases}
	\end{equation}
}


%
%
%

\newcommand{\RMatrix}[9]
{
\begin{equation}
\label{#1}
\begin{aligned}
\left. R_1 \right|_{(\xi,\infty)} 	&= {#2} &	\qquad\qquad		
\left. R_1 \right|_{\Sigma_1}		&= {#3} 
\\[5pt]
\left. R_3 \right|_{(-\infty,\xi)} 	&= {#4} 	&	
\left. R_3 \right|_{\Sigma_2} 	&= {#5}
\\[5pt]
\left. R_4 \right|_{(-\infty,\xi)} 	&= {#6} &	
\left. R_4 \right|_{\Sigma_3} 	&= {#7} 
\\[5pt]
\left. R_6 \right|_{(\xi,\infty)}  	&= {#8} &	
\left. R_6 \right|_{\Sigma_4} 	&= {#9}
\end{aligned}
\end{equation}
}

%
%

%
%
%
%
%
%

\newcommand{\SixMatrix}[6]
{
\begin{figure}
\centering
\caption{#1}
\vskip 15pt
\begin{tikzpicture}
%
%
\draw[thick]	 (-4,0) -- (4,0);
\draw[thick] 	(-4,4) -- (4,-4);
\draw[thick] 	(-4,-4) -- (4,4);
\path[fill=gray,opacity=0.25] (0,0) -- (-4,4) -- (4,4) -- (0,0);
\path[fill=gray,opacity=0.25] (0,0) -- (-4,-4) -- (4,-4) -- (0,0);
%
%
\draw	[fill]		(0,0)						circle[radius=0.075];
\node[below] at (0,-0.1) 				{$\xi$};
%
%
\node[above] at (3.5,2.5)				{$\Omega_1$};
\node[below]  at (3.5,-2.5)			{$\Omega_6$};
\node[above] at (0,3.25)				{$\Omega_2$};
\node[below] at (0,-3.25)				{$\Omega_5$};
\node[above] at (-3.5,2.5)			{$\Omega_3$};
\node[below] at (-3.5,-2.5)			{$\Omega_4$};
%
%
\node[above] at (0,1.25)				{$\twomat{1}{0}{0}{1}$};
\node[below] at (0,-1.25)				{$\twomat{1}{0}{0}{1}$};
%
%
\node[right] at (1.20,0.70)			{$#3$};
\node[left]   at (-1.20,0.70)			{$#4$};
\node[left]   at (-1.20,-0.70)			{$#5$};
\node[right] at (1.20,-0.70)			{$#6$};
\end{tikzpicture}
\label{#2}
\end{figure}
}

%
%
%
%

\newcommand{\JumpMatrixLeftCut}[6]
{
\begin{figure}
\centering
\caption{#1}
\vskip 15pt
\begin{tikzpicture}[scale=0.9]
%
%
\draw [fill] (4,4) circle [radius=0.075];												
\node at (4.0,3.65) {$\xi$};																
%
%
\draw 	[->, thick]  	(4,4) -- (5,5) ;				
\draw		[thick] 		(5,5) -- (6,6) ;
\draw		[->, thick] 	(2,6) -- (3,5) ;				
\draw		[thick]		(3,5) -- (4,4);	
\draw		[->, thick]	(2,2) -- (3,3);				
\draw		[thick]		(3,3) -- (4,4);
\draw		[->,thick]	(4,4) -- (5,3);				
\draw		[thick]  		(5,3) -- (6,2);
%
%
\draw [ thick, blue, decorate, decoration={snake,amplitude=0.5mm}] (0,4)  -- (4,4);				
\node at (6.5,4) {$-\pi < \arg (\zeta-\xi) < \pi$};
%
%
\node at (8.5,8.5)  	{$\Sigma_1$};
\node at (-0.5,8.5) 	{$\Sigma_2$};
\node at (-0.5,-0.5)	{$\Sigma_3$};
\node at (8.5,-0.5) 	{$\Sigma_4$};
%
%
\node at (7,7) {${#3}$};							
\node at (1,7) {${#4}$};							
\node at (1,1) {${#5}$};							
\node at (7,1) {${#6}$};							
\end{tikzpicture}
\label{#2}
\end{figure}
}

%
%
%
%

\newcommand{\JumpMatrixRightCut}[6]
{
\begin{figure}
\centering
\caption{#1}
\vskip 15pt
\begin{tikzpicture}[scale=0.9]
%
%
\draw [fill] (4,4) circle [radius=0.075];						
\node at (4.0,3.65) {$\xi$};										
%
%
\draw 	[->, thick]  	(4,4) -- (5,5) ;								
\draw		[thick] 		(5,5) -- (6,6) ;
\draw		[->, thick] 	(2,6) -- (3,5) ;								
\draw		[thick]		(3,5) -- (4,4);	
\draw		[->, thick]	(2,2) -- (3,3);								
\draw		[thick]		(3,3) -- (4,4);
\draw		[->,thick]	(4,4) -- (5,3);								
\draw		[thick]  		(5,3) -- (6,2);
%
%
\draw [  thick, blue, decorate, decoration={snake,amplitude=0.5mm}] (4,4)  -- (8,4);				
\node at (1.5,4) {$0 < \arg (\zeta-\xi) < 2\pi$};
%
%
\node at (8.5,8.5)  	{$\Sigma_1$};
\node at (-0.5,8.5) 	{$\Sigma_2$};
\node at (-0.5,-0.5)	{$\Sigma_3$};
\node at (8.5,-0.5) 	{$\Sigma_4$};
%
%
\node at (7,7) {${#3}$};						
\node at (1,7) {${#4}$};						
\node at (1,1) {${#5}$};						
\node at (7,1) {${#6}$};						
\end{tikzpicture}
\label{#2}
\end{figure}
}

%
%

\section{Introduction}

%
%

This paper is devoted to the 
large-time asymptotic behavior  of  solutions  to  the  Derivative 
Nonlinear Schr\"{o}dinger Equation (DNLS)
\begin{equation}
\label{DNLS1}
i u_t + u_{xx} = i \varepsilon (|u|^2 u)_x \qquad x\in \bbR
\end{equation}
where  $\varepsilon = \pm 1$. It follows our recent work \cite{LPS15} (referred to hereafter as Paper I)  where we established global existence of solutions for initial conditions in weighted Sobolev spaces satisfying some
additional spectral constraints. 
To make these assumptions more precise, let us first fix  
$\eps=1$ (since solutions of \eqref{DNLS1} with 
$\eps=1$ are mapped to solutions of \eqref{DNLS1} with 
$\eps = -1$ by $u \mapsto u(-x,t)$).  It is convenient to 
consider  a gauge-equivalent form of \eqref{DNLS1}. 
 Under the transformation
\begin{equation}
\label{q.gauge}
 q(x,t) = u(x,t) 
 			\exp
 				\left(
 					{-i\eps \int_{-\infty}^x |u(y,t)|^2 dy}
 				\right),
\end{equation}
solutions of \eqref{DNLS1} are mapped into solutions of 
\begin{equation}
\label{DNLS2}
i q_t +q_{xx} +i  q^2 \bar q_x +\frac{1}{2} |q|^4 q=0.
\end{equation}
This equation  is sometimes referred to as the Gerjikov-Ivanov equation \cite{F00}.

It is well-known since the seminal article of Kaup and Newell \cite{KN78} that the DNLS equation is 
solvable by the inverse scattering method. In his doctoral thesis, Lee \cite{Lee83} studied in detail the spectral problem posed by Kaup and Newell, and  the direct and inverse scattering maps for 
 generic   Schwartz class data. 

In Paper I,  we develop a rigorous analysis of the direct and inverse scattering transform 
for a class of initial conditions $q_0(x)=q(x,t=0)$ belonging to the space  $H^{2,2}(\bbR)$ 
and obeying additional spectral constraints that rule out ``bright'' and algebraic solitons that led us to a global existence result in this setting.
Here,  $H^{2,2}(\bbR)$  denotes  
the completion of $C_0^\infty(\bbR)$ in the norm
$$
\norm{u}{H^{2,2}(\bbR)}
 	= \left( \norm{(1+|x|^2)u}{2}^2 + \norm{u''}{2}^2 \right)^{1/2}. 
$$
A recent work by Pelinovsky-Shimabukuro \cite{PS16}
addresses these questions in slightly different spaces.
In the present paper, we give a full description of the large-time behavior of solutions.
Before stating our assumptions and results more precisely,  we recall known results concerning the long-time behavior of DNLS solutions. The first results go back to 
the work of Hayashi, Naumkin and Uchida \cite{HNU99} where the authors consider a class of  one-dimensional 
nonlinear Schr\"odinger  equations with 
general nonlinearities containing first-order derivatives.They prove a global existence result for  smooth initial conditions that are small in some weighted 
Sobolev spaces, as well as  a  time-decay rate. Their analysis gives   the existence of asymptotic states and a logarithmic correction to the phase.

In the context of inverse scattering,  the first  work to provide
explicit formulas (i.e.,  depending only on initial conditions) 
for
large-time asymptotics of solutions
is due to  Zakharov and Manakov  \cite{ZM76} in the context of  the NLS equation.
In this setting, the  inverse scattering map  and the reconstruction of the solution (potential)  
is formulated through an oscillatory Riemann Hilbert problem (RHP).  The  latter  
(in our case,  Problem \ref{prob:DNLS.RH0})
consists of  an oriented contour specifying the discontinuities of a piecewise analytic function, and jump matrices relating their limits from above and below.  The solution to the original PDE is recovered from the asymptotics of solutions to the RHP (for our case, see the reconstruction formula \eqref{DNLS.q}). 

The now well-known steepest descent method of Deift and Zhou \cite{DZ93} provides a systematic  method to 
reduce the original RHP 
to a canonical model RHP whose solution is calculated in terms of parabolic cylinder  functions.  This reduction is done through a sequence of transformations whose effects do not change  the  large--time behavior 
of the recovered solution at leading order. 
In this way, one  obtains the asymptotic 
behavior of the solution in terms of the spectral data (thus in terms of the initial conditions) 
with a degree of precision that is not currently obtainable
through direct PDE methods.  
This approach  has  been applied to a number of
integrable systems including mKdV \cite{DIZ93,DZ93} and defocusing NLS \cite{DZ03}.

A formal analysis of general oscillatory  RHP with Schwartz class scattering data is presented in Varzugin
\cite{V96}. More recently, Do \cite{Do11} developed  a version of { the} Deift-Zhou steepest descent 
method that  emphasizes  real-variable methods and extends to a much larger class of RHPs. 
A key step in the nonlinear steepest descent method consists in deforming the contour 
associated to  the RHP in a way adapted to the structure of the phase function that  defines the oscillatory 
dependence on parameters (for our case, see \eqref{DNLS.V} for the jump matrix, 
\eqref{DNLS.phase} for the phase function, and
Figure \ref{fig:contour} for the deformation). 
When the entries of the jump matrix are not analytic, 
they must be approximated by rational functions so that the deformation can be carried out, and the error in the recovered solution due to the approximation must be estimated.

Dieng and McLaughlin \cite{DM08} proposed a variant of Deift-Zhou method combining  
steepest descent and $\bar{\partial}$-problem asymptotics. This approach 
allows a certain amount 
of non-analyticity in the RHP reductions,  leading to a $\bar{\partial}$-problem to 
be solved in some sectors of the complex plane  where analyticity of 
 the jump matrix (and hence the solution to the RHP) fails.
 The
new  $\bar{\partial}$-problem can be recast into an integral equation and solved by 
Neumann series.   
These ideas were implemented by Miller and McLaughlin \cite{MM08} to the 
study
of asymptotic stability of orthogonal polynomials.
In the context of NLS with soliton solutions, 
they were
successfully applied to prove  asymptotic stability of 
$N$-soliton solutions to defocusing NLS \cite{CJ14} 
and address the soliton resolution  problem for focusing NLS \cite{BJM16}.  

In this paper, we adapt this analysis  to
the DNLS equation for initial conditions excluding solitons, building on our Paper I 
where we proved the
Lipschitz continuity of  the direct and inverse scattering map from $H^{2,2}(\bbR)$ to itself. 
The presence of solitons will be addressed in a forthcoming  article.

To describe our approach, we recall that \eqref{DNLS2} generates an isospectral flow for the problem
\begin{equation}
\label{L}
\frac{d}{dx} \Psi = -i\zeta^2 \sigma_3 \Psi + \zeta Q(x) \Psi + P(x) \Psi
\end{equation}
where
$$ \sigma_3 = \diagmat{1}{-1}, \,\,\, Q(x) = \offdiagmat{q(x)}{\overline{q(x)}}, \,\, \, P(x) =\frac{i}{2} \diagmat{-|q(x)|^2}{|q(x)|^2}.$$
If $q \in L^1(\bbR) \cap L^2(\bbR)$, equation \eqref{L} admits bounded 
solutions 
for $\zeta \in \Sigma$ where
$$ \Sigma = \left\{ \zeta \in \bbC: \Imag(\zeta^2) = 0 \right\}. $$
For $\zeta \in \Sigma$ and 
$q \in L^1(\bbR)\cap L^2(\bbR)$,  there exist unique solutions $\Psi^\pm$ of \eqref{L} obeying the respective asymptotic conditions
$$\lim_{x \rarr \pm \infty} \Psi^\pm(x,\zeta) e^{ix\zeta^2 \sigma_3} = \diagmat{1}{1},$$
and there is a matrix $T(\zeta)$, the transition matrix, with 
 $\Psi^+(x,\zeta)=\Psi^-(x,\zeta) T(\zeta)$.
The matrix $T(\zeta)$ takes the form
\begin{equation} \label{matrixT}
 T(\zeta) = \twomat{a(\zeta)}{\bb(\zeta)}{b(\zeta)}{\ba(\zeta)} 
 \end{equation}
where $a$, $b$, $\ba$, $\bb$ obey the determinant relation
$$ a(\zeta)\ba(\zeta) - b(\zeta)\bb(\zeta) = 1 $$
and the symmetry relations (see Paper I, eq. (1.20))
\begin{align} \label{symmetry}
a(-\zeta) = a(\zeta), \quad b(-\zeta) = -b(\zeta), \quad \ba(\zeta)=\overline{a(\zetabar)}, \quad \bb(\zeta) = \overline{ b(\zetabar)}. 
\end{align}
In order to rule out algebraic and bright solitons, we assume that $q_0$ is so chosen that $a(\zeta)$ is nonvanishing on $\Sigma$ {(which rules out algebraic solitons)} and admits a zero-free analytic continuation to $\Imag(\zeta^2)<0$ 
(which rules out bright solitons). 

%
%

\begin{figure}[h!]
\caption{The Contours $\Sigma$ and $\mathbb{R}$}
\vskip 15pt
\begin{tikzpicture}[scale=0.7]
\path[fill=cyan,opacity=0.5]	(0,0) rectangle(4,4);
\path[fill=cyan,opacity=0.5]    (-4,-4) rectangle(0,0);
\path[fill=pink,opacity=0.5]		(-4,0) rectangle (0,4);
\path[fill=pink,opacity=0.5]		(0,-4) rectangle (4,0);
\draw[fill] (0,0) circle[radius=0.075];
\draw[thick,->,>=stealth] 	(0,0) -- (2,0);
\draw	[thick]    	(2,0) -- (4,0);
\draw[thick,->,>=stealth]	(0,0) -- (-2,0);
\draw[thick]		(-2,0) -- (-4,0);
\draw[thick,->,>=stealth]	(0,4) -- (0,2);
\draw[thick]		(0,2) -- (0,0);
\draw[thick,->,>=stealth]	(0,-4) -- (0,-2);
\draw[thick]		(0,-2) -- (0,0);
\node at (2,2) 	{$\Omega^+$};
\node at (-2,-2)	{$\Omega^+$};
\node at (-2,2)	{$\Omega^-$};
\node at (2,-2)	{$\Omega^-$};
\node[above] at (2,0.05)  	{$+$};
\node[below] at (2,-0.05)		{$-$};
\node[above] at (-2,0.05)		{$-$};
\node[below] at (-2,-0.05)	{$+$};
\node[left] at (-0.05,2)			{$-$};
\node[right] at (0.05,2)			{$+$};
\node[left] at (-0.05,-2)		{$+$};
\node[right] at (0.05,-2)		{$-$};
\end{tikzpicture}
\qquad
\begin{tikzpicture}[scale=0.7]
\path[fill=cyan,opacity=0.5] 	(-4,0)  rectangle (4,4);
\path[fill=pink,opacity=0.5] 	(-4,-4) rectangle (4,0);
\draw[fill] (0,0) circle[radius=0.075];
\draw[->,thick,>=stealth] 	(-4,0) -- (-2,0);
\draw[thick]		(-2,0) -- (0,0);
\draw[->,thick,>=stealth]	(0,0) -- (2,0);
\draw[thick]		(2,0) -- (4,0);
\node[above] at (0,1) {$\mathbb{C}^+$};
\node[below] at (0,-1) {$\mathbb{C}^-$};
\node[above] at (1.25,0.05) {$+$};
\node[below] at (1.25,-0.05) {$-$};
\end{tikzpicture}
\label{fig:contours}
\end{figure}
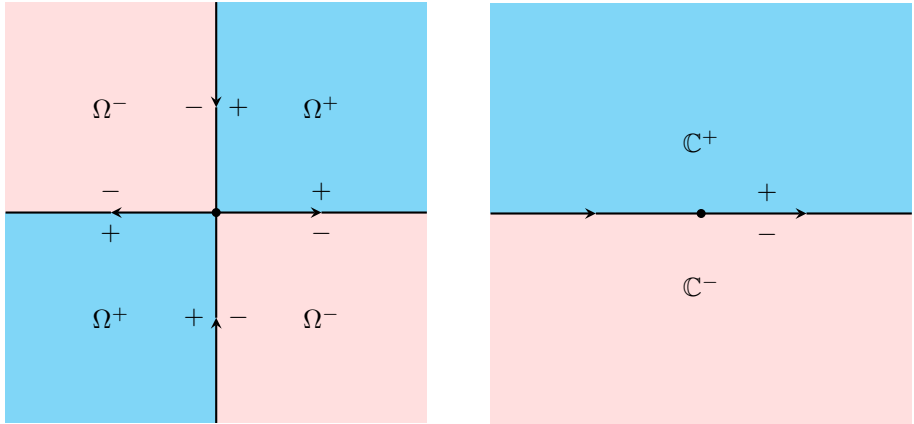

%
%

As shown in Section 1.2 of Paper I, the scattering data and Jost solutions, which are naturally functions of $\zeta \in \Sigma$, may be transformed to functions on $\bbR$, with consequent simplifications of the direct and inverse scattering problems.
Even functions $f$ on $\Sigma$ define functions $g$ on the real line $\bbR$ via $g(\zeta^2) = f(\zeta)$ and the map $\zeta \rarr \zeta^2$ maps the contour $\Sigma$ onto the contour $\bbR$. 
This fact, together with the 
symmetry relations \eqref{symmetry}, implies that the functions 
$\zeta^{-1} \bb(\zeta)/a(\zeta)$ and $\zeta^{-1} b(\zeta)/\ba(\zeta)$ induce functions $\rho(z)$ and $\brho(z)$ on the real line, and, under an appropriate change of variable (see Section 1.2 of Paper I), the Jost solutions may be regarded as functions of $z=\zeta^2$. The functions $\rho$ and $\brho$ are called the \emph{scattering data} 
for $q_0$. 

Figure \ref{fig:contours} displays the contours $\Sigma$ and $\bbR$ with their orientation as well as the sectors
$\Omega^\pm = \{ \zeta  \in \bbC\ : \ \pm ~{\rm Im } (\zeta^2) > 0\}$ and $\bbC^\pm = \{  z \in \bbC\ : \ \pm ~ {\rm Im } (z)   > 0\}$. 
The map $\zeta \mapsto \zeta^2$ preserves the orientations shown there.

We note the important identity
\begin{equation*}
a(\zeta) \ba(\zeta)   = (1-z|\rho(z)|^2)^{-1} = (1-z|\brho(z)|^2)^{-1}, \quad z=\zeta^2.
\end{equation*}
Hence, $1- z|\rho(z)|^2 >  c > 0$ if $|a(\zeta)| $ is bounded from above. The latter is true when in particular  $q \in H^{2,2}(\bbR)$ (see Propositions   3.1 and 3.2
 of Paper I).

In Paper I, we showed that the maps $q_0 \mapsto \rho$ and $q_0 \mapsto \brho$ are Lipschitz continuous from the soliton-free $H^{2,2}(\bbR)$ potentials $q_0$ 
into
 $H^{2,2}(\bbR)$.  We assume that the Cauchy data are soliton-free, thus  only the reflection coefficient $\rho$ is needed for the reconstruction of the
solution.

The scattering data $\rho$ and $\brho$ are not independent; as showed in Section 6
of Paper I (see the remarks at the beginning of Section 6 and Lemma 6.14),
$\brho$ can be recovered from $\rho$ by solving a scalar RHP. 
We proved in turn that, given $\rho$ {corresponding to the Cauchy data $q(x,0)$,} 
we may recover the solution $q(x,t)$ of \eqref{DNLS2}  through 
RHPs. There are two versions of  the RHP,
one to recover the solution for  $x \ge 0$ and one for  $x \le  0$. 
For example, the following RHP  provides the reconstruction formula when $x \ge 0$.
 
\begin{problem}
\label{prob:DNLS.RH0}
Given $\rho \in H^{2,2}(\bbR)$ with $1-z |\rho(z)|^2 > 0$ for all $z \in \bbR$, 
find a row vector-valued function $\bfN(z;x,t)$ on $\bbC \setminus \bbR$ with the following properties:
\begin{enumerate}
\item		$\bfN(z;x,t) \rarr (1,0)$ as $|z| \rarr \infty$,
			\medskip
			
\item		$\bfN(z;,x,t)$ is analytic for $z \in \bbC \setminus \bbR$ with continuous boundary values
			$$\bfN_\pm(z;x,t) = \lim_{\eps \darr 0} \bfN(z\pm i\eps;x,t),$$
			\medskip
			
\item		
The jump relation $\bfN_+(z;x,t) = \bfN_	-(z;x,t) V(z)$ holds, where
			\begin{equation}
			\label{DNLS.V}
			V(z)	=		\Twomat{1-z |\rho(z)|^2}{\rho(z)e^{2it \theta}}
											{-z\overline{\rho(z)}e^{-2it\theta}}{1}
			\end{equation}
and the real phase function $\theta$ is given by
\begin{equation}
\label{DNLS.phase}
\theta(z;x,t) = -\left( z\frac{x}{t} + 2z^2 \right).
\end{equation}
\end{enumerate}
\end{problem}
\bigskip\noindent
From the solution of Problem \ref{prob:DNLS.RH0}, we recover
\begin{equation}
\label{DNLS.q}
q(x,t) = \lim_{z \rarr \infty} 2iz \bfN_{12}(x,t,z)
\end{equation}
for $x \ge 0$,  where the limit is taken in $\bbC\setminus \bbR$ along any direction not tangent to $\bbR$.

\bigskip

\begin{remark}
\label{rem:DNLS.RHP0}
The jump matrix \eqref{DNLS.V} satisfies $V \in L^\infty(\bbR)$ and $\det V(z)=1$.
It follows from a standard result in RHP theory (see, for example, \cite[Theorem 2.10]{DZ03}) that Problem \eqref{prob:DNLS.RH0} may have at most one solution.

\end{remark}

\begin{remark}
The symmetry reduction from the contour $\Sigma$ to the contour $\bbR$ significantly simplifies the analysis of the 
RHP because in this setting, the phase factor $\theta$ has only one stationary point (instead of two as is the case 
in \cite{KV97}). The reason why we seek a row vector-valued solution rather than a matrix-valued 
solution is that the matrix-valued solution is not properly normalized; see
Paper I, Section 1.2 for further discussion.
\end{remark}

The central results of this paper are the following theorems
that give the long-time behavior of  the  solutions $q$
 of \eqref{DNLS2} and $u$ of \eqref{DNLS1} respectively.

\begin{theorem}
\label{thm:main1}
Suppose that $q_0 \in H^{2,2}(\bbR)$ is a soliton-free potential. In particular,   its reflection coefficient $\rho \in H^{2,2}(\bbR)$ and $c  =\inf_{z \in \bbR} \left(1-z |\rho(z)|^2 \right)>0$. Denote by  $\xi=-x/4t$ the stationary phase point of the phase function \eqref{DNLS.phase}.

\begin{enumerate}
\item[(i)]  As $t \rarr + \infty$, 
\begin{equation}
\label{result-Th1-1}
q(x,t) \sim 
	\begin{cases}
			\dfrac{1}{\sqrt{t}} \alpha_1(\xi) e^{-i\kappa(\xi) \log(8t) + ix^2/(4t)} + \bigO{t^{-3/4}}, 	&	x>0\\
			\\
			\dfrac{1}{\sqrt{t}} \alpha_2(\xi) e^{-i\kappa(\xi) \log(8t) + ix^2/(4t)} + \bigO{t^{-3/4}}, 	&	x<0
	\end{cases}
\end{equation}

\item[(ii)] As $t \rarr -\infty$, 
\begin{equation}
\label{result-Th1-2}
q(x,t) \sim
	\begin{cases}
			\dfrac{1}{\sqrt{-t}} \alpha_2(\xi) e^{i\kappa(\xi) \log(-8t) + ix^2/(4t)} + \bigO{(-t)^{3/4}}, & x >0\\
			\\
			\dfrac{1}{\sqrt{-t}} \alpha_1(\xi) e^{i\kappa(\xi) \log(-8t) + ix^2/(4t)} + \bigO{(-t)^{3/4}}, & x < 0.
	\end{cases}
\end{equation}
\end{enumerate}
Here
\begin{align}
\label{result-Th1-kappa}
\kappa(z)			&	=	-\frac{1}{2\pi}\log(1-z|\rho(z)|^2),\\
\label{result-Th1-alpha} 
|\alpha_1(\xi)|^2 &= |\alpha_2(\xi)|^2 =	\frac{\kappa(\xi)}{2\xi}.
\end{align}
%
%
For $t>0$,
\begin{align*}
\arg \alpha_1(\xi) 	&=		\frac{\pi}{4}+ \arg \Gamma(i\kappa(\xi)) + \arg \rho(\xi)\\
							&\quad	+ \frac{1}{\pi} \int_{-\infty}^\xi \log|s-\xi| \, d \log \left(1-s|\rho(s)|^2 \right),\\[5pt]
\arg \alpha_2(\xi)	&=		\arg \alpha_1(\xi) - \pi
\end{align*}
while for $t<0$,
\begin{align*}
\arg \alpha_1(\xi)	&=		-\frac{\pi}{4} - \arg \Gamma(i\kappa(\xi)) + \arg \rho(\xi)\\		
							&\quad	+ \frac{1}{\pi} \int_\xi^\infty \log|s-\xi| \, d \log \left(1-s|\rho(s)|^2 \right),\\[5pt]
\arg \alpha_2(\xi)	&=		\arg \alpha_1(\xi) + \pi.
\end{align*}
%
%
In \eqref{result-Th1-1} and \eqref{result-Th1-2}, the implied constants in the remainder terms depend only on
$\norm{\rho}{H^{2,2}(\bbR)}$ and $c>0$.
\end{theorem}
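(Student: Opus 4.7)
The plan is to apply the Dieng--McLaughlin variant of the Deift--Zhou nonlinear steepest descent method to Problem \ref{prob:DNLS.RH0} (for $x>0$) and to its analogue for $x<0$, then extract the asymptotic through the reconstruction formula \eqref{DNLS.q}. I will describe the argument in detail for the case $t>0$, $x>0$; the remaining three sign combinations follow by exchanging the roles of $(-\infty,\xi)$ and $(\xi,\infty)$ and conjugating the relevant jump matrices, which produces the interchange between $\alpha_1$ and $\alpha_2$ and the sign flip in the $\arg \Gamma$ contribution.

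The first step is to introduce a scalar $\bdelta(z;\xi)$ solving the scalar RHP on $(-\infty,\xi)$ with jump $1-z|\rho(z)|^2$ and the normalization $\bdelta(z)\rarr 1$ as $|z|\rarr\infty$, so that Plemelj yields an explicit Cauchy-integral representation producing the exponent $\kappa(\xi)$ of \eqref{result-Th1-kappa} and the integral $\tfrac{1}{\pi}\int_{-\infty}^{\xi}\log|s-\xi|\,d\log(1-s|\rho(s)|^2)$ appearing in $\arg\alpha_1(\xi)$. The lower bound $1-z|\rho(z)|^2 \ge c>0$ ensures $\log(1-z|\rho|^2) \in H^{2,2}(\bbR)$ so that $\bdelta$ has the required boundedness and regularity. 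The conjugation $\bfN \mapsto \bfN\,\bdelta^{-\sigma_3}$ replaces $V$ by a matrix that admits a single upper--lower triangular factorization on $(\xi,\infty)$ and a single lower--upper factorization on $(-\infty,\xi)$, with off-diagonal entries proportional to $e^{\pm 2it\theta}$ decaying on the appropriate sectors emanating from $\xi$.

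Next I open lenses along the steepest-descent rays at angles $\pm\pi/4$, $\pm 3\pi/4$ from $\xi$. Because $\rho\in H^{2,2}(\bbR)$ is merely Sobolev and not analytic, the off-diagonal factors are extended into the lens sectors by $\dbar$-extensions $R_j$ chosen to equal the required factorization data on $\bbR$, to vanish on the new contour, and to satisfy $|\dbar R_j|\lesssim |\rho'(\Real z)|+|z-\xi|^{-1/2}$ as in \cite{DM08, MM08}. This deformation yields a mixed problem: a pure RHP on the cross of four rays through $\xi$, together with a $\dbar$-equation supported in the four lens sectors. Splitting $\bfN = \bfN^{(\dbar)}\,\bfN^{(\mathrm{RHP})}$, the RHP factor is localized to a neighborhood of $\xi$ and, under the rescaling $\zeta = \sqrt{8t}\,(z-\xi)$, the jump matrix freezes to one whose entries involve only $\rho(\xi)$ and $\bdelta(\xi)$. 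This is the standard parabolic-cylinder model problem solved in terms of Weber functions; its $1/\zeta$-coefficient gives the leading $t^{-1/2}$ contribution, producing $|\alpha_j(\xi)|^2 = \kappa(\xi)/(2\xi)$ and the $\arg\Gamma(i\kappa(\xi))+\pi/4$ term in $\arg\alpha_1$.

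The $\dbar$-factor $\bfN^{(\dbar)}$ is treated by the associated solid-Cauchy integral equation. The bounds $|e^{2it\theta}|\le e^{-ct|z-\xi|^2}$ on the lens sectors together with the estimate on $|\dbar R_j|$ give $L^\infty\to L^\infty$ operator-norm decay of order $t^{-1/4}$ for the relevant solid-Cauchy operator, so that the Neumann series converges for large $t$ and the contribution to $\lim 2iz\bfN_{12}$ is $\bigO{t^{-3/4}}$, uniformly in $\xi$ with constants controlled by $\norm{\rho}{H^{2,2}(\bbR)}$ and $c$. Combining the model contribution and the $\dbar$-error through \eqref{DNLS.q} produces \eqref{result-Th1-1}; the case $x<0$ uses the companion RHP of Paper I, in which $\bdelta$ is built from the interval $(\xi,\infty)$, explaining the swap $\alpha_1\leftrightarrow\alpha_2$. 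The main obstacle is the careful bookkeeping of phase through the four transformations (conjugation by $\bdelta$, lens opening, local rescaling, and solution of the model problem) so that the $\log(8t)$ coefficient $\kappa(\xi)$ and the closed-form expressions for $\arg\alpha_j(\xi)$ emerge with the correct sign conventions, together with the quantitative $\dbar$-estimate at only $H^{2,2}$ regularity of $\rho$.
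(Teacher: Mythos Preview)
Your proposal is correct and follows essentially the same route as the paper: conjugation by the scalar $\delta$, $\dbar$-extension onto the cross $\Sigma^{(2)}$, factoring off the parabolic-cylinder model RHP, and Neumann-series control of the $\dbar$-remainder yielding the $t^{-3/4}$ error. One slip: the extensions $R_j$ do not vanish on the rays $\Sigma_j$ but instead match the frozen-coefficient data there (cf.\ \eqref{R1}--\eqref{R6}), which is exactly what produces the nontrivial jump of the model problem you then solve.
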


As a consequence, we get the long-time behavior of the solution $u$  to the original DNLS equation \eqref{DNLS1}.  
\begin{theorem}
\label{thm:main2}
Suppose that $u_0 \in H^{2,2}(\bbR)$ and let 
$$ q_0(x) = u_0(x) \exp\left(-i\int_{-\infty}^x |u_0(y)|^2 \, dy \right).$$
Let  $\rho$ be the reflection coefficient associated to $q_0$ by the direct scattering map 
and  $\kappa$ defined by \eqref{result-Th1-kappa}. Assume also that $c=\inf_{z \in \bbR} \left(1-z|\rho(z)|^2\right) > 0$. Denote  by $\xi = -x/4t$ the stationary 
phase point of the phase function \eqref{DNLS.phase} and fix $\xi \neq 0$. Then:
\begin{enumerate}
\item[(i)]		As $t \rarr +\infty$, 
\begin{equation}
\label{result-Th2-1}
u(x,t)	\sim
	\begin{cases}
		\dfrac{1}{\sqrt{t}} \alpha_3(\xi) e^{-i\kappa(\xi) \log(8t) + ix^2/(4t)} + \bigO[\xi]{ t^{-3/4}}, 	&	x > 0 \\
		\\
		\dfrac{1}{\sqrt{t}} \alpha_4(\xi) e^{-i\kappa(\xi) \log(8t) + ix^2/(4t)} + \bigO[\xi]{ t^{-3/4}},		&	x < 0
	\end{cases}	
\end{equation}
\item[(ii)] As $t \rarr -\infty$,
\begin{equation}
\label{result-Th2-2}
u(x,t) \sim
	\begin{cases}
		\dfrac{1}{\sqrt{-t}}\alpha_4(\xi) e^{i\kappa(\xi) \log(-8t) + ix^2/(4t)} + \bigO[\xi]{(-t)^{-3/4}}		&	x > 0 \\
		\\
		\dfrac{1}{\sqrt{-t}}\alpha_3(\xi) e^{i\kappa(\xi) \log(-8t) + ix^2/(4t)}  + \bigO[\xi]{(-t)^{-3/4}}		&	x < 0
	\end{cases}
\end{equation}
\end{enumerate}
Here,
\begin{equation}
\label{result-Th2-alpha}
|\alpha_3(\xi)|^2 	= 	|\alpha_4(\xi)|^2 = \frac{\kappa(\xi)}{2\xi} 
\end{equation}
For $t>0$,
\begin{align}
\arg \alpha_3(\xi) 	&=	\arg \alpha_1(\xi)  - \frac{1}{\pi} \int_{\xi}^{ \infty} \frac{\log(1-s|\rho(s)|^2)}{s} \, ds\\
\label{result-Th2-arg-alpha+}
\arg \alpha_4(\xi)		&=	\arg \alpha_2(\xi) - \frac{1}{\pi} \int_{\xi}^{ \infty} \frac{\log(1-s|\rho(s)|^2)}{s} \, ds.
\end{align}
while for $t<0$,
\begin{align}
\arg \alpha_3(\xi) 	&=	\arg \alpha_1(\xi)  - \frac{1}{\pi} \int_{-\infty}^{ \xi} \frac{\log(1-s|\rho(s)|^2)}{s} \, ds\\
\label{result-Th2-arg-alpha-}
\arg \alpha_4(\xi)		&=	\arg \alpha_2(\xi) - \frac{1}{\pi} \int_{-\infty}^{ \xi} \frac{\log(1-s|\rho(s)|^2)}{s} \, ds.
\end{align}
\end{theorem}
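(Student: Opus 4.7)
The plan is to deduce Theorem~\ref{thm:main2} from Theorem~\ref{thm:main1} by inverting the gauge transformation~\eqref{q.gauge}. Since the exponential factor in~\eqref{q.gauge} is unimodular, $|u(x,t)| = |q(x,t)|$ everywhere, and solving for $u$ gives
\begin{equation*}
u(x,t) = q(x,t) \exp\bigl(i \Phi(x,t)\bigr), \qquad \Phi(x,t) := \int_{-\infty}^x |q(y,t)|^2 \, dy.
\end{equation*}
This identity immediately produces $|\alpha_3(\xi)| = |\alpha_1(\xi)|$ and $|\alpha_4(\xi)| = |\alpha_2(\xi)|$, giving~\eqref{result-Th2-alpha}, and reduces the theorem to computing the asymptotic value of the real phase $\Phi(x,t)$ along rays $\xi = -x/(4t)$ fixed with $\xi \neq 0$.

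The central computation is a change of variable $\eta = -y/(4t)$ applied to the leading-order asymptotic from Theorem~\ref{thm:main1}. Using $|\alpha_{1,2}(\xi)|^2 = \kappa(\xi)/(2\xi)$ from~\eqref{result-Th1-alpha}, one writes
\begin{equation*}
|q(y,t)|^2 = \frac{\kappa(\eta)}{2 \eta \, |t|} + r(y,t),
\end{equation*}
with $|r(y,t)| \le C |t|^{-5/4}$ pointwise, $C$ depending only on $\norm{\rho}{H^{2,2}(\bbR)}$ and $c$. For $t \to +\infty$ the change of variable sends $y \in (-\infty, x)$ onto $\eta \in (\xi, \infty)$, and the leading part of $\Phi$ becomes
\begin{equation*}
2 \int_\xi^\infty \frac{\kappa(\eta)}{\eta} \, d\eta = -\frac{1}{\pi} \int_\xi^\infty \frac{\log\bigl(1 - s|\rho(s)|^2\bigr)}{s} \, ds
\end{equation*}
by the definition~\eqref{result-Th1-kappa} of $\kappa$. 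This is exactly the phase shift appearing between $\arg\alpha_1$ and $\arg\alpha_3$ (and between $\arg\alpha_2$ and $\arg\alpha_4$) in Theorem~\ref{thm:main2}. For $t \to -\infty$, the change of variable instead sends $y \in (-\infty, x)$ onto $\eta \in (-\infty, \xi)$, yielding $-\pi^{-1} \int_{-\infty}^\xi \log(1-s|\rho(s)|^2)/s \, ds$, consistent with~\eqref{result-Th2-arg-alpha-}. The $O(|t|^{-3/4})$ remainder for $u$ then inherits from that of $q$, since $e^{i\Phi}$ is unimodular and does not degrade the rate.

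The main obstacle will be justifying the passage to the limit in the integral defining $\Phi$: the pointwise bound $|r(y,t)| = O(|t|^{-5/4})$, integrated over an interval of infinite length, does not by itself yield a vanishing error. I would handle this by splitting the integration domain into a bulk part on $|\eta| \le M$ (equivalently $|y| \le 4M|t|$) and a tail on $|\eta| > M$. On the bulk, the error contributes $O(M |t|^{-1/4}) \to 0$ for each fixed $M$ as $|t| \to \infty$. For the tail, I would invoke conservation of the $L^2$ norm $\norm{q(\cdot,t)}{2} = \norm{q_0}{2}$ together with the trace identity
\begin{equation*}
\norm{q_0}{2}^2 = -\frac{1}{\pi} \int_{-\infty}^\infty \frac{\log\bigl(1 - s|\rho(s)|^2\bigr)}{s} \, ds,
\end{equation*}
which can be read off from the factorization $a(\zeta)\,\ba(\zeta) = (1-z|\rho(z)|^2)^{-1}$ together with the large-$\zeta$ expansion of $\log a(\zeta)$. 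This bounds the tail of $\int |q|^2 \, dy$ by the tail of the explicit scattering integral uniformly in $t$, so sending $|t| \to \infty$ first and then $M \to \infty$ closes the argument and completes the proof.
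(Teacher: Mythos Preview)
Your reduction to computing the phase $\Phi(x,t)=\int_{-\infty}^x|q(y,t)|^2\,dy$ is correct, and the change of variables identifying its formal limit with $-\pi^{-1}\int_\xi^\infty \log(1-s|\rho(s)|^2)/s\,ds$ is fine. The gap is in the sentence ``the $O(|t|^{-3/4})$ remainder for $u$ then inherits from that of $q$, since $e^{i\Phi}$ is unimodular and does not degrade the rate.'' Unimodularity only controls the term $(q-q_{\mathrm{as}})e^{i\Phi}$; you still have to bound $q_{\mathrm{as}}\bigl(e^{i\Phi}-e^{i\Phi_\infty}\bigr)$, which is $O(t^{-1/2})\cdot|\Phi-\Phi_\infty|$. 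To recover the stated $O_\xi(t^{-3/4})$ you would need $|\Phi-\Phi_\infty|=O_\xi(t^{-1/4})$, but your bulk--tail splitting, with $M$ fixed first and then $t\to\infty$, yields only $|\Phi-\Phi_\infty|=o(1)$ with no rate. Even optimizing $M=M(t)$ and using the quantitative tail bound $\int_{|\eta|>M}\kappa(\eta)/\eta\,d\eta\lesssim M^{-4}$ available from $\rho\in H^{2,2}$, balancing $M^{-4}$ against $Mt^{-1/4}$ gives only $|\Phi-\Phi_\infty|\lesssim t^{-1/5}$, hence $u-u_{\mathrm{as}}=O(t^{-7/10})$, which falls short of $t^{-3/4}$.

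The paper avoids this loss by taking a genuinely different route: rather than integrating the pointwise asymptotics of $|q|^2$, it identifies $\exp\bigl(-\tfrac{i}{2}\int_{\pm\infty}^x|q|^2\bigr)$ with the $(1,1)$ entry of the Beals--Coifman solution at the spectral point $z=0$, and then reads this off from the already-constructed representation $\bfN=\bfN^{(3)}\bfN^{\RHP}(\calR^{(2)})^{-1}\delta^{\sigma_3}$ evaluated at $z=0$. Since each factor is known explicitly there (in particular $\bfN^{\RHP}(\zeta(0);\xi)=I+O((\sqrt{|t|}\,\xi)^{-1})$ and $\calR^{(2)}(0)=I$), one obtains $e^{i\Phi}=e^{i\Phi_\infty}+O_\xi(t^{-1/2})$ directly, which multiplied by $q_{\mathrm{as}}=O(t^{-1/2})$ is $O_\xi(t^{-1})$ and is absorbed into the $O_\xi(t^{-3/4})$ remainder. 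Your integration approach is conceptually more elementary but, as written, does not deliver the claimed rate; if you pursue it you must either sharpen the tail estimate substantially or accept a weaker remainder than the theorem asserts.
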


Theorem \ref{thm:main2}
is a direct consequence of  Theorem \ref{thm:main1}  and
Proposition \ref{prop:gauge+}.

%
%

\begin{remark}
Here we examine the continuity of our asymptotic formulas for $q(x,t)$ at $x=0$ by computing left- and right-hand
limits as $x \rarr 0$  for the two cases in \eqref{result-Th1-1}. A similar analysis can be made for the two cases 
in \eqref{result-Th1-2}. First, notice that the Gamma function has the property that
$$ 
\lim_{x \rarr 0^+} \arg \Gamma(ix) = -\frac{\pi}{2},
\quad
\lim_{x \rarr 0^-} \arg \Gamma(ix) =  \frac{\pi}{2}.
$$
Recalling that 
$$
\kappa(\xi) = -\frac{1}{2\pi} \log \left( 1- \xi |\rho(\xi)|^2 \right),
$$
we see that $\kappa(\xi) <0$ for $\xi < 0$ while $\kappa(\xi)>0$ for $\xi>0$. 
Since $\xi = -x/4t$,  for $x>0$ and $t>0$, $\xi < 0$,  and therefore 
$$ \lim_{x \rarr 0^+} \arg \left( \Gamma(i\kappa(\xi))\right) = \frac{\pi}{2}, $$
while for $x<0$ and $t>0$, $\xi<0$ and therefore
$$ \lim_{x\rarr 0^-} \arg \left( \Gamma(i\kappa(\xi))  \right) = -\frac{\pi}{2}. $$
This observation, and the fact that $\arg \alpha_1(\xi)$ and $\arg \alpha_2(\xi)$ differ by $\pi$, shows that the asymptotic formulas
for $q(x,t)$ in \eqref{result-Th1-1} agree in the 
respective limits $x \rarr 0^-$ and $x \rarr 0^+$. A similar argument shows that the asymptotic formulas
for $q(x,t)$ when $t<0$ and $x \rarr 0^+$ and $x \rarr 0^-$ also agree.
\end{remark}

\begin{remark}
In contrast to Theorem \ref{thm:main1}, the remainder estimates 
depend on $\xi$  as well as on $\norm{\rho}{H^{2,2}}$ and $c>0$. 
This dependence arises from Proposition 
\ref{prop:gauge+}. The error estimate is  well-behaved for $|\xi|>1$ but poorly behaved as $|\xi| \rarr 0$.
\end{remark}

\begin{remark}
Although we do not make any explicit ``small data'' assumption, we are have so far been unable  to construct  large initial data satisfying our hypotheses. 
\end{remark}

Kitaev and Vartanian
\cite{KV97}  as well as more recently Xu and Fan \cite{XuFan12}, considered the same problem for 
 Schwartz class initial data in the soliton-free sector
and obtain in the asymptotic formula \eqref{result-Th1-1}  an error term of order $(\log t)/t$ .
Our results apply to a larger class of initial data and, thanks to 
the $\dbar$-approach, arguably entail a simpler proof than earlier studies of the problem. 

 The proof of  Theorem \ref{thm:main1}  addresses separately the four cases $x  \lessgtr 0$, $t \to \pm\infty$. 
Indeed,
to reconstruct the solution $q(x,t)$, we need
to solve two different RHPs, one for $x>0$ and one for $x<0$.
The  sign of $t$ is important in the phase factors of the entries of the 
jump matrix $V$ of \eqref{DNLS.V}. Depending on the sign of $t$, 
one  performs different factorizations of 
the jump matrix $V$ in order to have the correct exponential decay on the deformed contour. 
 Finally,  a  large-time estimate of the phase factor 
$ \exp{(-i\int_{-\infty}^x |q(y,t)|^2  dy)}$
of \eqref{q.gauge} in terms of the scattering data,
 obtained in Section \ref{sec:gauge},
is  needed to obtain Theorem \ref{thm:main2}. 

As discussed earlier, the proof of Theorem \ref{thm:main1}, following \cite{BJM16,DM08},
consists of  several steps corresponding to  transformations of the initial RHP
\ref{prob:DNLS.RH0} implemented successively. For sake of clarity, we present in Section  \ref{sec:summary} 
a summary of the analysis of the various steps in each of the four cases, $x  \lessgtr 0$, $t \to \pm\infty$ and we show how the RHPs and the
respective factorizations are modified to take into account the signs of $x$ and $t$.
In the next Sections
(Sections \ref{sec:prep} to \ref{sec:large-time}), we provide the details of each step in one  case $x>0$, $t \to \infty$ as follows.

The first step,  carried out 
in  Section \ref{sec:prep}, is the conjugation of 
the row vector $\bfN$ with a scalar function $\delta(z)$   
that solves the  scalar model RHP Problem  \ref{prob:RH.delta} (see equation \eqref{N1}). This operation is standard
 and leads to a new RHP, Problem \ref{prob:DNLS.RHP1}. It is performed in order to ensure that the phase factors
in the factorization of the jump matrix \eqref{DNLS.V1}
have the correct exponential decay when the contour deformation described in Section \ref{sec:mixed} is carried out.

The second  step
(Section \ref{sec:mixed}) is a  deformation of contour from $\bbR$ to a new
contour  $\Sigma^{(2)}$ defined in \eqref{new-contour} (see Figure \ref{fig:contour}),
in such a way that   the exponential factors $e^{\pm it\theta}$ have strong decay (in time) along the rays of the contour. 
The solution has no jump along the real axis (this is important because there is no decay of the phase for large $z \in \bbR$).
This transformation induces some  `small' deviation from analyticity in the sectors $\Omega_1\cup \Omega_3 \cup \Omega_4 \cup \Omega_6$,  and leads to
a mixed $\dbar$--RHP-problem,  Problem \ref{prob:DNLS.RHP.dbar},  for a new row-vector valued function denoted $\bfN^{(2)}$.
This is where the approach of Dieng-McLaughlin \cite{DM08} differs from the steepest descent of \cite{DZ03} 
which in contrast only deals with piecewise analytic solutions. 
In the approach of \cite{DZ03}, the contour deformation is carried out by approximating the  entries of the jump matrix by rational functions which admit a direct, analytic continuation.

The  third step (Section \ref{sec:model}) is a `factorization' of  $\bfN^{(2)}$ 
in the form $\bfN^{(2)} =  \bfN^{(3)} \bfN^{\rm{PC}}$ where $\bfN^{\rm{PC}}$ is solution of a 
model RHP problem,
Problem \ref{prob:DNLS.V.RHP}, and $\bfN^{(3)}$ 
a solution of $\bar\partial$ problem, Problem \ref{prob:DNLS.dbar}.

The fourth step is the  derivation of the  explicit solution of the RHP by parabolic cylinder functions
(Section \ref{sec:model}); 
this procedure is standard but we give the key steps for the reader's convenience.

The fifth step is the solution of the $\dbar$-problem using integral equation methods. 
The $\dbar$ problem may be written as an integral equation
(equation \eqref{DNLS.dbar.int}) whose integral operator has small norm  at  large times (see  equation \eqref{dbar.int.est1})
allowing the use of Neumann series (Section \ref{sec:dbar}).

At each step  of   the analysis, one needs to estimate how the reductions modify the long-time asymptotics of the solution and carefully keep track of the 
dependency of the constants (as functions of the stationary phase point $\xi$).

The sixth step, carried out in Section \ref{sec:large-time}, consists in  regrouping the transformations to  find the behavior  of the solution of DNLS for  $x>0$ as $t\to \infty$,  using the large-$z$ behavior of the
RHP solutions.  

Finally,  the long-time behavior of the phase factor   
appearing in \eqref{q.gauge} necessary
 to obtain Theorem \ref{thm:main2}, is given in Section \ref{sec:gauge}.

The paper ends with some technical appendices.  
Appendix \ref{app:delta-asy} gives the  asymptotics of the functions $\delta_\ell$ and $\delta_r$  
which solve scalar model RHPs and are used in the first step of the reduction. 
Appendix \ref{app:4-RHPs} outlines the solution of the 
appropriate RHP's for all four cases $\pm t>0$, $\pm x>0$.
Appendix \ref{app:Phi-sol} records solution formulae important for the four model RHP's.
Appendix \ref{app:NRHP.bd} proves $L^\infty$-bounds on the solution to the model RHP.
Appendix \ref{app:figures} contains figures  illustrating
how  the different jump matrices in the sequence of 
transformations of RHPs are modified according to the four cases $\pm t>0$, $\pm x>0$.

\section{Summary of the Proof} \label{sec:summary}

As discussed above, the large-time behavior of the solution to DNLS is obtained through a sequence of transformations of RHP's.
Special attention has to be given to the signs of $x$ and $t$ as slightly different RHP's are involved depending on the signs under consideration. In Sections 
\ref{sec:prep} to \ref{sec:large-time},  we present the full calculations   of the derivation in one case $x>0, t>0$. In this Section, 
we summarize the computations without details  in the four cases $\pm t>0$, $\pm x>0$  as they are needed to get the final expressions of Theorems \ref{thm:main1} and \ref{thm:main2}.
  
The initial normalized RHPs  
that provide the reconstruction formula for the potential have contour $\bbR$ and phase function
$$ \theta(z;x,t) = -\left(z \frac{x}{t}+2z^2\right).$$ 
If $x>0$,
the initial RHP is 
\begin{subequations}
\label{RHP.right}
\begin{align}
\label{RHP.right.jump}
\bfN_+(z;x,t)	&=	\bfN_-(z;x,t) e^{it\theta \ad \sigma_3} V_0(z)\\[5pt]
\label{RHP.right.matrix}
V_0(z)			&=	\Twomat{1-z|\rho(z)|^2}{\rho(z)}{-z\overline{\rho(z)}}{1}\\[5pt]
\label{RHP.right.asy}
\bfN(z;x,t)		&=	(1,0) + \bigO{\frac{1}{z}}
\end{align}
\end{subequations}
while if $x<0$, the initial RHP is 
\begin{subequations}
\label{RHP.left}
\begin{align}
\label{RHP.left.jump}
\bfN_+(z;x,t)	&=	\bfN_-(z;x,t) e^{it\theta \ad \sigma_3} \bV_0(z)\\[5pt]
\label{RHP.left.matrix}
\bV_0(z)		&=	\Twomat{1}{\brho(z)}{-z\overline{\brho(z)}}{1-z|\brho(z)|^2}\\[5pt]
\label{RHP.left.asy}
\bfN(z;x,t)		&=	(1,0) + \bigO{\frac{1}{z}}
\end{align}
\end{subequations}
where $\brho(z) = \rho(z)/\Delta(z)$ and
\begin{equation*}
\Delta(\lam)	=	\exp\left( \frac{1}{\pi i} \,\, \mathrm{p.v.} \int_{-\infty}^\infty \frac{\kappa(s)}{ \lam-s} \, ds \right).
\end{equation*}
In both of these cases, the solution  $q(x,t)$ of  \eqref{q.gauge} is recovered from the reconstruction formula
\begin{equation}
\label{q.recon.bis.bis}
q(x,t)	=	\lim_{z \rarr \infty} \left[ 2iz \left( \bfN(z;x,t) \right)_{12} \right].
\end{equation}
The derivation of the large-time behavior is obtained through several steps.
The first steps 
\begin{enumerate}
\item[(1)]	Preparation for steepest descent
\item[(2)]	Contour deformation from $\bbR$ to $\Sigma^{(2)}$ (see Figure \ref{fig:contour})
\item[(3)]	Reduction to a model RHP
\item[(4)]	Solution to the model RHP
\end{enumerate}
have to be performed successively for each case  $\pm t>0$, $\pm x>0$ as the calculations, although similar, are
 specific to each situation.
They are followed by 
\begin{enumerate}
\item[(5)]	Analysis of $\bar \partial$ problem
\item[(6)]   Regrouping   of the transformations.
\end{enumerate}
The latter are common to all cases and detailed  in Sections \ref{sec:dbar} and \ref{sec:large-time}  for $x>0, t>0$.

 We now summarize steps 1--4.

\bigskip

\textbf{Step 1}:  We change variables in the initial RHP using the analytic functions (with branch cut either on the left or right half-line with endpoint $\xi$)
\begin{equation}
\label{delta.app}	
\delta_\ell(z;\xi)	\coloneqq	\exp\left( i \int_{-\infty}^\xi \frac{\kappa(s)}{s-z} \, ds \right), \quad z \in \bbC \setminus (-\infty,\xi]
\end{equation}
and
\begin{equation}
\label{bdelta.app}
\delta_r(z;\xi) \coloneqq	\exp\left( - i \int_\xi^\infty \frac{\kappa(s)}{s-z} \, ds \right), \quad z \in \bbC \setminus [\xi,\infty).
\end{equation}
Here
\begin{equation*}
\kappa(s) = -\frac{1}{2\pi} \log \left( 1 - s|\rho(s)|^2 \right) = -\frac{1}{2\pi} \log \left( 1 - s|\brho(s)|^2 \right) .
\end{equation*}
The functions $\delta_\ell$ and $\delta_r$ are solutions of scalar model RHPs:  $\delta_\ell$ satisfies Problem  \ref{prob:RH.delta}
and $\delta_r$ satisfies a similar one with its branch cut at the right of the endpoint $\xi$. Their properties are recalled in  Appendix \ref{app:delta-asy}.
In particular,
they  obey the bounds
$$e^{-\norm{\kappa}{\infty}/2} \leq \left| \delta^*(z) \right| \leq e^{\norm{\kappa}{\infty}/2}$$
where $\delta^*$ is $\delta_\ell^{\pm 1}$ or $\delta_r^{\pm 1}$, as easily follows from 
$$ \left| \Imag\left(\int_{\pm \infty}^\xi \frac{\kappa(s)}{s-z} \, ds \right) \right| \leq \frac{\norm{\kappa}{\infty}}{2}. $$
By defining
\begin{equation}
\label{N1.def}
\bfN^{(1)}(z;x,t) = 
	\bfN(z;x,t) \times
	\begin{cases}
		\hspace{3pt} 	\delta_\ell^{-\sigma_3} 	&	t>0, \, x>0 \\
		\hspace{3pt}  \delta_r^{-\sigma_3}		&	t>0, \, x<0 \\
		\hspace{3pt} 	\delta_r^{\sigma_3}			&	t<0, \, x>0 \\
		\hspace{3pt} 	\delta_\ell^{\sigma_3}		&	t<0, \, x<0
	\end{cases}
\end{equation}
we obtain a RHP for $\bfN^{(1)}$ with a new
jump matrix $e^{2it\theta \ad \sigma_3} V^{(1)}$. We give expressions for $V^{(1)}$ for each of the four cases $\pm t>0$, $\pm x>0$ 
 in \eqref{V1++}, \eqref{V1+-}, \eqref{V1-+}, and \eqref{V1--} respectively.
The new RHP's are `prepared' for the steepest descent method in the sense that contours can be 
deformed so that the exponential functions $e^{\pm it\theta}$ have maximum decay in $|z-\xi|$.

\bigskip

\textbf{Step 2}: 
We introduce a new unknown 
\begin{equation} \label{N2.def} 
\bfN^{(2)} = \bfN^{(1)} \calR
\end{equation}
where $\calR$ is a piecewise continuous
matrix-valued function taking the form shown in Figure \ref{fig:R.++.+-} if $t>0$, and 
 in Figure \ref{fig:R.-+.--}
if $t<0$. The purpose of the deformation is to remove the jumps along the real axis and introduce jumps on the contours $\Sigma_1$, $\Sigma_2$, $\Sigma_3$, and $\Sigma_4$ corresponding to the model problem. Thus the values of the $R_i$ 
along $(-\infty,\xi)$ and $(\xi,\infty)$ are determined by the jump matrix $V^{(1)}$, while their values along the $\Sigma_i$ are determined as follows:
\begin{itemize}
\item[(1)]	Scattering data are replaced by their values at $z=\xi$ (`freezing coefficients')
\item[(2)]	Powers of $\delta$ are replaced by their asymptotic forms near $z=\xi$ (see Appendix \ref{app:delta-asy}, equations \eqref{delta.xi.asy.+}, \eqref{bdelta.xi.asy+}, \eqref{delta.xi.asy.-}, \eqref{bdelta.xi.asy.-}).
\end{itemize}
The expressions of the  matrix $\calR$ in each of the four cases are given respectively in  \eqref{R.++}, 
\eqref{R.+-}, \eqref{R.-+}, and \eqref{R.--}, noting that the symbols $\delta$, $\delta_0$, and $\delta_\pm$ are 
defined at the beginning of each subsection and 
\emph{have different meanings in each of them} as indicated in  \eqref{delta.++.def}, \eqref{delta.+-.def}, 
\eqref{delta.-+.def}, and \eqref{delta.--.def}.

The new unknown $\bfN^{(2)}$ has a jump matrix which is most easily described by introducing the scaled variable 
\begin{equation}
\label{z.to.zeta}
\zeta(z)=\sqrt{8|t|}(z-\xi).
\end{equation}
We then have
\begin{equation}
\label{V2}
V^{(2)} = \begin{cases}
					\zeta^{i \kappa \ad\sigma_3}e^{-\frac{i}{4}\zeta^2 \ad \sigma_3} 	V_{0}^{(2)}(\zeta;\xi) 	&	\pm x > 0,  \, t > 0\\
					\\
					\zeta^{-i\kappa \ad\sigma_3}e^{\frac{i}{4}\zeta^2 \ad \sigma_3} 	V_0^{(2)}	(\zeta;\xi)	&	\pm x > 0, \, t <0
				\end{cases}
\end{equation}
In the above expression, the complex powers are defined by choosing the branch of the logarithm with $-\pi< \arg \zeta < \pi$ in the cases $t>0, \, x>0$ and $t<0$, $x<0$, and the branch of the logarithm with $0 < \arg \zeta < 2\pi$ in the cases
$t>0$, $x<0$ and $t<0$, $x>0$.
The matrices $V^{(2)}_0(\zeta;\xi)$ for each of the four cases are shown in Figures \ref{fig:V20++}, \ref{fig:V20+-}, \ref{fig:V20-+},
and \ref{fig:V20--}. The branch cut for the logarithm is also indicated.
Because $\calR$ is not a holomorphic function, the new unknown $\bfN^{(2)}$ obeys
a mixed $\dbar$-RHP. 

\bigskip

\textbf{Step 3}:  Suppose that $\bfN^{\PC}$ solves the pure  
 RHP with jump matrix $V^{(2)}$. By factoring
 \begin{equation} \label{N3.def}
  \bfN^{(2)} = \bfN^{(3)} \bfN^{\PC}, 
  \end{equation}
  we see that 
 $\bfN^{(3)}$ solves the $\dbar$ problem
(in the $z$-variable)
\begin{align*}
\dbar \bfN^{(3)}(z;x,t)	&= 	\bfN^{(3)}(z;x,t) W(z;x,t)	\\
W(z;x,t)				&=	\bfN^{\PC}(\zeta;\xi) (\dbar \calR)(z;x,t) \bfN^{\PC}(\zeta;\xi)^{-1} \\
\bfN^{(3)}			&=	(1,0) + \bigO{\frac{1}{z}}
\end{align*}
which is equivalent to the integral equation
$$ \bfN^{(3)}(z;x,t) = (1,0) + \frac{1}{\pi} \int_{\bbC} \frac{1}{z-z'} \bfN^{(3)}(z';x,t) W(z',x,t) \, dz'. $$
It can be shown 
(see Proposition \ref{prop:N3.est})
that
$$ \bfN^{(3)}(z;x,t) = (1,0) + \frac{1}{z}\bfN^{(3)}_1(x,t) + o_{\xi,t}\left(\frac{1}{z}\right) $$
where
$$ \left| \bfN^{(3)}_1(x,t)\right| \lesssim t^{-3/4}. $$
This estimate shows that the leading asymptotics of $q(x,t)$, as computed from \eqref{q.recon.bis.bis}, 
will be determined by the solution $\bfN^{\PC}$ of the model Riemann-Hilbert problem.

\bigskip

\textbf{Step 4}: It remains to solve the model RHP for $\bfN^{\PC}$. 
It has contour $\Sigma^{(2)}_0$ (centered at $\zeta=0$ in the new variables) 
and the solution has the form
\begin{align*}
\bfN^{\PC}_+(\zeta;\xi) 	&=	\bfN^{\PC}_-(\zeta;\xi) V^{(2)}(\zeta;\xi)\\
\bfN^\PC(z;\xi) 				&\sim	I + \frac{m^{(0)}}{\zeta} + \littleO{\frac{1}{\zeta}} \text{ in } \bbC \setminus \Sigma^{(2)}_0
\end{align*}
where $V^{(2)}$ is given by \eqref{V2}.
This problem can be solved in a standard way using parabolic cylinder functions 
(see, for example, \cite{DIZ93,DZ93,DZ94,Its81}). 
We factor
\begin{equation}
\label{RHP.Model.factor}
\bfN^{\PC}(\zeta;\xi)	=	\begin{cases}
											\Phi(\zeta;\xi) P(\xi) 
											e^{\frac{i}{4}\zeta^2 \sigma_3} \zeta^{-i\kappa \sigma_3}
											&	t >0
											\\
											\\
											\Phi(\zeta;\xi) P(\xi) 
											e^{-\frac{i}{4}\zeta^2 \sigma_3} \zeta^{i\kappa \sigma_3}
											&	t < 0.
									\end{cases}
\end{equation}
The constant matrix $P(\xi)$ is derived from $V^{(2)}_0$ as shown in Figure \ref{fig:V.to.P}; for $i=1,2,3,4$, $V_i$ denotes the restriction of $V^{(2)}$ to $\Sigma_i$. This factorization introduces a new unknown, 
$\Phi(\zeta;\xi)$, which obeys an RHP with contour $\bbR$ and constant jump matrix. In case $t>0$, we have
\begin{equation}
\label{RHP.Phi.+}
\begin{aligned}
\Phi_+(\zeta;\xi)	&=		\Phi_-(\zeta;\xi) V^{(0)} \\[5pt]
V^{(0)}				&=		\Twomat{1-\xi|r_\xi|^2}{r_\xi}{-\xi \overline{r_\xi}}{1} \\[5pt]
\Phi(\zeta;\xi)		&\sim	e^{-\frac{i}{4}\zeta^2 \sigma_3} \zeta^{i\kappa \sigma_3} 
											\left(I + \frac{m^{(1)}}{\zeta} + \littleO{\zeta^{-1}} \right),
\end{aligned}
\end{equation}
while for $t<0$, we have
\begin{equation}
\label{RHP.Phi.-}
\begin{aligned}
\Phi_+(\zeta;\xi)	&=		\Phi_-(\zeta;\xi) \bV^{(0)} \\[5pt]
\bV^{(0)}			&=		\Twomat{1}{\br_\xi}{-\xi \overline{\br_\xi}}{1-\xi|\br_\xi|^2}\\[5pt]
\Phi(\zeta;\xi)		&\sim	e^{\frac{i}{4} \zeta^2 \sigma_3} \zeta^{-i\kappa \sigma_3}
													\left(I + \frac{ m^{(0)}}{\zeta} + \littleO{\zeta^{-1}} \right).
\end{aligned}
\end{equation}
Note that the meaning of $r_\xi$ or $\br_\xi$ is \emph{different} depending on which of the four cases
is under consideration (see equations \eqref{r.++}, \eqref{r.+-}, \eqref{r.-+}, \eqref{r.--}).

The matrix function $\Phi$ is obtained as a solution of an ODE. Differentiating the jump relation in 
\eqref{RHP.Phi.+} or \eqref{RHP.Phi.-} 
with respect to $\zeta$, one can show that
\begin{equation}
\label{Phi.DE}
\frac{d \Phi}{d\zeta} \pm i\frac{i\zeta}{2} \sigma_3 \Phi	= \beta \Phi, 	\quad \pm t >0 
\end{equation}
where 
\begin{equation}
\label{m.to.beta}
\beta = \frac{i}{2} \left[ \sigma_3, m^{(0)} \right]
\end{equation}
 or equivalently
$$ \beta_{12} = i \left( m^{(0)} \right)_{12}, \quad \beta_{21} = -i \left( m^{(0)} \right)_{21} $$ 
is unknown at this stage of the calculation.
The difference in sign between the $t>0$ and $t<0$ cases comes from the difference in the  prescribed 
factorization \eqref{RHP.Model.factor}.  
The goal is to compute $m^{(0)}$ which will determine leading asymptotics of $q(x,t)$. 

The solution of \eqref{Phi.DE} is expressed explicitly in terms parabolic cylinder functions, treating $\beta_{12}$ and $\beta_{21}$ as 
(unknown) constants. The solution formulas are given in Appendix \ref{app:Phi-sol}. One then substitutes these solutions into the appropriate jump relation \eqref{RHP.Phi.+} or \eqref{RHP.Phi.-}
in order to compute $\beta_{12}$ and hence, by \eqref{m.to.beta},  $m^{(0)}_{12}$. Indeed, one may easily deduce from the jump relation \eqref{RHP.Phi.+} that
\begin{equation}
\label{Phi.Wronski.+}
V^{(0)}_{21} 		= 	-\xi \overline{r_\xi} = \Phi_{11}^- \Phi_{21}^+ - \Phi_{21}^- \Phi_{11}^+ \
\end{equation}
for $t>0$, and similarly from the jump relation \eqref{RHP.Phi.-}, that
\begin{equation}
\label{Phi.Wronski.-}
\bV^{(0)}_{21}	= 	-\xi \overline{\br_\xi} = \Phi_{11}^- \Phi_{21}^+ - \Phi_{21}^- \Phi_{11}^+ 
\end{equation}						
for $t<0$. These Wronskians are evaluated for each of the four cases $\pm t>0$, $\pm x>0$ in Appendix \ref{app:Phi-sol},
equations \eqref{Phi.Wronski.+.comp} and \eqref{Phi.Wronski.-.comp}.
Using these results in \eqref{Phi.Wronski.+} and \eqref{Phi.Wronski.-}, we find
\begin{equation}
\label{beta12.+}
\beta_{12}	=	
\begin{cases}
		\dfrac{\sqrt{2\pi}e^{-\pi \kappa/2} e^{i\pi /4}}{-\xi \overline{r_\xi} \, \Gamma(-i\kappa)}
			&	t>0, \, x >0\\
		\\
		\dfrac{\sqrt{2\pi} e^{-\pi \kappa/2} e^{i\pi /4}}{-\xi \overline{r_\xi} \,  \Gamma(-i\kappa)} e^{-2\pi \kappa}
			& t>0, \, x<0
\end{cases} 
\end{equation}
and
\begin{equation}
\label{beta12.-}
\beta_{12} = 
\begin{cases}
	\dfrac{\sqrt{2\pi}e^{-\pi \kappa/2} e^{3\pi i /4}}{-\xi \overline{r_\xi} \, \Gamma(i\kappa)} e^{2\pi \kappa},
		&	t<0, \, x<0 \\
	\\
	\dfrac{\sqrt{2\pi}e^{-\pi \kappa/2}e^{3\pi i/4}}{-\xi \overline{\br_\xi} \,  \Gamma(i\kappa)},
		& t<0,  \, x<0
\end{cases}
\end{equation}
We recall that the values of $r_\xi$ and $\br_\xi$ differ from case to case.

We can now deduce the leading asymptotic behavior of $q(x,t)$ from the reconstruction formula
$$
q_{\mathrm{as}}(x,t)	
	=	\lim_{z \rarr \infty} 2iz \frac{\left(m^{(0)}\right)_{12}}{\zeta} 
	=	2\frac{\beta_{12}}{\sqrt{8|t|}}
$$
where we used \eqref{z.to.zeta} and \eqref{m.to.beta}. For $\pm t >0$ we find
\begin{equation}
\label{q.as.template}
q_{\mathrm{as}}(x,t) = \frac{1}{\sqrt{|t|}} \alpha(\xi) e^{\pm i\kappa(\xi) \log(8|t|)} e^{-i\frac{x^2}{4t}}
\end{equation}
with 
\begin{align}
\label{alpha.sq.bis}
|\alpha(\xi)|	^2		&=	\frac{1}{2}|\beta_{12}|^2\\
\label{alpha.arg.bis}
\arg \alpha(\xi)	&=	\arg \beta_{12}	\mp \kappa(\xi)  \log(8|t|) + x^2/4t
\end{align}
From \eqref{alpha.sq.bis}--\eqref{alpha.arg.bis}, \eqref{beta12.+}, \eqref{beta12.-}, and \eqref{q.as.template}, we can compute $q_{\mathrm{as}}(x,t)$ in each of the four cases. 
In Appendix \ref{app:4-RHPs}
we summarize the key formulae leading to $q_{\mathrm{as}}(x,t)$. 

In the next  five sections, we present the details of the proof  of Theorem \ref{thm:main1} in the case $x>0$,  $t>0$.

\section{Preparation for Steepest Descent}
\label{sec:prep}
In this section, we  provide the detailed analysis of Step 1 (as described in Section \ref{sec:summary}),  for the case $x>0$,  $t>0$.
In order to apply the method of steepest descent, we introduce a new unknown 
\begin{equation}
\label{N1}
\bfN^{(1)}(z;x,t) = \bfN(z;x,t) \delta(z)^{-\sigma_3} 
\end{equation}
where $\delta(z)=\delta_\ell(z)$ as defined in \eqref{delta.app}
and  solves 
the scalar RHP 
Problem \ref{prob:RH.delta} below. 
To state the scalar RHP, recall that the phase function \eqref{DNLS.phase} satisfies 
$$ \theta_z(x,t,z) = -\left(\frac{x}{t} +4 z\right)$$ and has a single critical point at $$\xi=-\frac{x}{4t}.$$ 

\begin{problem}
\label{prob:RH.delta}
Given $\xi \in \bbR$ and $\rho \in H^{2,2}(\bbR)$ with $1-s|\rho(s)|^2 >0$ for all $s \in \bbR$, find a scalar function 
$\delta(z) = \delta(z;\xi)$, analytic for
$z \in \bbC \setminus (-\infty,\xi]$ with the following properties:
\begin{enumerate}
\item		$\delta(z) \rarr 1$ as $z \rarr \infty$,
\item		$\delta(z)$ has continuous boundary values $\delta_\pm(z) =\lim_{\eps \darr 0} \delta(z \pm i\eps)$ for $z \in (-\infty, \xi)$,
\item		$\delta_\pm$ obey the jump relation
			$$ \delta_+(z) = \begin{cases}
											\delta_-(z)  \left(1 - z \left| \rho(z) \right|^2 \right),	&	 z\in (-\infty,\xi)\\
											\delta_-(z), &	z \in (\xi,\infty)
										\end{cases}
			$$
\end{enumerate}
\end{problem}
The following lemma is ``standard'' (see, for example, \cite[Proposition 2.12]{DZ03} or \cite[Proposition 6.1 and Lemma 6.2]{Do11}). Recall the definition \eqref{result-Th1-kappa} of $\kappa$.
\begin{lemma}
\label{lemma:delta}
Suppose $\rho \in H^{2,2}(\bbR)$ and that $\kappa(s)$ is real for all $s \in \bbR$. 
\begin{itemize}
\item[(i)]		(Existence, Uniqueness) Problem \ref{prob:RH.delta} has the unique solution
\begin{equation}
\label{RH.delta.sol}
\delta(z) = \exp	\left( 
								i \int_{-\infty}^\xi \frac{1}{s-z}\kappa(s) \, ds 
						\right).
\end{equation}
Moreover,
\begin{equation*}
\delta(z) \overline{\delta(\zbar)}=1
\end{equation*}
holds. 
The function  $\delta(z)$ satisfies the estimate
$$ e^{-\norm{\kappa}{\infty}/2} \leq |\delta(z)| \leq e^{\norm{\kappa}{\infty}/2}.  $$

\item[(iii)] (Large-$z$ asymptotics) 
 It admits a large-$|z|$ asymptotic expansion
$$ \delta(z)  = 1 + \frac{i}{z} 
				\int_{-\infty}^\xi 
						\kappa(s)  \, ds + \bigO{\frac{1}{z^2}}.$$
												
\item[(iv)] (Asymptotics as $z \rarr \xi$ along a ray in $\bbC \setminus \bbR$)
				Along any ray of the form $\xi + e^{i\phi}\bbR^+$ with $0<\phi<\pi$ or $\pi < \phi < 2\pi$, 
				
				$$ 
						 \left| \delta(z) - \delta_0(\xi) (z-\xi)^{i\kappa(\xi)} \right| 
						 		\lesssim_{\, \rho, \phi}
						-\left| z-\xi \right| \log |z -\xi|.$$
				The implied constant depends on $\rho$ through its $H^{2,2}(\bbR)$-norm  
				and is independent of $\xi \in \bbR$.
				Here
				$ \delta_0(\xi) = e^{i\beta(\xi,\xi)}$ and 
				$$ \beta(z,\xi) = - \kappa(\xi) \log(z- \xi+1) + \int_{-\infty}^\xi \frac{\kappa(s) - \chi(s)\kappa(\xi)}{s-z} \, ds, $$
				where $\chi$ is the characteristic function of the interval 
				 $(\xi-1,\xi)$.
				We choose the branch of the logarithm with $-\pi  < \arg(z) < \pi$.  
\end{itemize}
\end{lemma}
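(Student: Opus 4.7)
The plan is to linearize this multiplicative scalar RHP by the substitution $\delta(z) = e^{g(z)}$, which converts it to the additive scalar RHP $g_+(z) - g_-(z) = \log(1-z|\rho(z)|^2) = -2\pi\kappa(z)$ on $(-\infty,\xi)$ and $g_+ = g_-$ on $(\xi,\infty)$, with $g(z) \to 0$ as $|z|\to\infty$. The Plemelj--Sokhotski formula immediately gives the candidate
\[
g(z) = \frac{1}{2\pi i}\int_{-\infty}^\xi \frac{-2\pi\kappa(s)}{s-z}\,ds = i\int_{-\infty}^\xi \frac{\kappa(s)}{s-z}\,ds,
\]
which is holomorphic on $\bbC\setminus(-\infty,\xi]$. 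Since $\rho\in H^{2,2}(\bbR)$ and $\inf_s(1-s|\rho(s)|^2) > 0$, $\kappa$ is bounded and $H^1$, hence this integral converges and has the requisite boundary values. Uniqueness follows from dividing two candidate solutions and invoking Liouville's theorem: the ratio has trivial jump across $(-\infty,\xi)$ by the jump relation, extends to an entire bounded function by the $L^\infty$ bound I prove next, and tends to $1$ at infinity.

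For the symmetry $\delta(z)\overline{\delta(\bar z)}=1$, I note that $\kappa$ is real-valued and $\overline{1/(s-\bar z)} = 1/(s-z)$, so $\overline{g(\bar z)} = -g(z)$ and the identity is immediate. For the $L^\infty$ bound, I write $\log|\delta(z)| = \Real\,g(z) = -\Imag\int_{-\infty}^\xi \kappa(s)/(s-z)\,ds$ and observe that for $z=x+iy$, $\Imag[1/(s-z)] = y/((s-x)^2+y^2)$ integrates in $s$ to a value in $[0,\pi]$ (resp.\ $[-\pi,0]$) independent of $y$; combined with $\|\kappa\|_\infty < \infty$ this gives a uniform two-sided bound on $|\delta|$.

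For the large-$z$ asymptotics, I Taylor expand $1/(s-z) = -1/z - s/z^2 - \cdots$ inside the integral. Since $\kappa \in L^1(-\infty,\xi)$ (because $\rho\in H^{2,2}$ gives integrable decay of $\kappa$), this yields $g(z) = -(i/z)\int_{-\infty}^\xi \kappa(s)\,ds + \bigO{1/z^2}$, and exponentiating gives the stated expansion modulo a sign (which I would check against the paper's convention).

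The main obstacle, and the most delicate part, is (iv), the asymptotic behavior as $z\to\xi$ along a non-real ray. Here I split
\[
g(z) = i\int_{-\infty}^\xi \frac{\kappa(s)-\chi(s)\kappa(\xi)}{s-z}\,ds + i\kappa(\xi)\int_{\xi-1}^\xi \frac{ds}{s-z},
\]
with $\chi$ the characteristic function of $(\xi-1,\xi)$. The second integral evaluates to $i\kappa(\xi)[\log(\xi-z)-\log(\xi-1-z)]$; choosing the branch $-\pi<\arg<\pi$ and carrying along the constant shifts, it yields the logarithmic singularity $i\kappa(\xi)\log(z-\xi)$ plus a regular contribution $-i\kappa(\xi)\log(z-\xi+1)$, which matches the first term in the definition of $\beta(z,\xi)$. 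The first integral is continuous at $z=\xi$ because $\kappa(s)-\chi(s)\kappa(\xi)$ vanishes to first order at $s=\xi$ (using that $\kappa\in C^{1/2}$ via the Sobolev embedding $H^{2,2}\hookrightarrow C^1$ and the spectral assumption). To obtain the rate $|z-\xi|\log|z-\xi|$, I compare the first integral at $z$ and at $\xi$, writing the difference as
\[
i(z-\xi)\int_{-\infty}^\xi \frac{\kappa(s)-\chi(s)\kappa(\xi)}{(s-z)(s-\xi)}\,ds.
\]
Splitting the domain near and away from $\xi$ and using Lipschitz continuity of $\kappa$ on the near part, the integral is bounded by a multiple of $|\log|z-\xi||$ uniformly in the ray direction $\phi\in(0,\pi)\cup(\pi,2\pi)$. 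Finally, to pass from $g(z)$ to $\delta(z)$, I use $|\delta(z) - \delta_0(\xi)(z-\xi)^{i\kappa(\xi)}| = |\delta_0(\xi)(z-\xi)^{i\kappa(\xi)}|\,|e^{r(z)}-1|$ with $r(z) = g(z) - i\kappa(\xi)\log(z-\xi) - i\beta(\xi,\xi)$; the factor $|(z-\xi)^{i\kappa(\xi)}| = e^{-\kappa(\xi)\arg(z-\xi)}$ is bounded uniformly by $e^{\pi\|\kappa\|_\infty}$, and the elementary estimate $|e^{r}-1|\lesssim |r|$ for small $r$ delivers the claimed bound with a constant depending only on $\|\rho\|_{H^{2,2}}$ and the ray angle $\phi$.
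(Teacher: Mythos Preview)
Your proof is correct and follows essentially the same route as the paper's: the Plemelj formula for existence, Liouville for uniqueness, the bound on the imaginary part of the Cauchy kernel for the $L^\infty$ estimate, geometric-series expansion for large $z$, and the splitting via the cutoff $\chi$ to isolate the $(z-\xi)^{i\kappa(\xi)}$ singularity in~(iv). One small remark: in~(iv) the paper Taylor-expands $\kappa(s)-\kappa(\xi)$ to \emph{second} order, estimating the $\kappa'(\xi)$ piece by an explicit antiderivative and the remainder via $\|\kappa''\|_{2}$; your argument using only Lipschitz continuity of $\kappa$ (i.e.\ $\kappa\in C^1$ from $H^2\hookrightarrow C^1$, not ``$C^{1/2}$'' as you wrote) already suffices and is a mild simplification.
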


\begin{proof}
The proofs of these properties are   similar, for example, to  proofs 
given in  \cite[Section 2]{DZ03}.  We provide some details  for the reader's convenience.

(i) Existence follows from the explicit formula  \eqref{RH.delta.sol}. Since $\rho$ is $C^1$, uniqueness follows from Liouville's theorem.

(ii) These estimates  are obtained from the observation that 
$$ 
\left| \Real \left( i \int_{-\infty}^\xi \frac{\kappa(s)}{s-z} \, ds \right) \right| \leq \frac{\norm{\kappa}{\infty}}{2}. 
$$
(iii) and (iv) are proved 
 in  Appendix \ref{app:delta-asy}.
\end{proof}

If $\bfN(z;x,t)$ solves Problem \ref{prob:DNLS.RH0} and $\delta(z)$ solves Problem \ref{prob:RH.delta}, then the row vector-valued function $\bfN^{(1)}(z;x,t)$ defined in 
\eqref{N1} solves the following RHP.  

\begin{problem}
\label{prob:DNLS.RHP1}
Given $\rho \in H^{2,2}(\bbR)$ with $1-z |\rho(z)|^2 > 0$ for all $z \in \bbR$, find a row vector-valued function $\bfN^{(1)}(z;x,t)$ on $\bbC \setminus \bbR$ with the following properties:
\begin{enumerate}
\item		$\bfN^{(1)}(z;x,t) \rarr (1,0)$ as $|z| \rarr \infty$,
\item		$\bfN^{(1)}(z;x,t)$ is analytic for $z \in  \bbC \setminus \bbR$
			with continuous boundary values
			$$\bfN^{(1)}_\pm(z;x,t) 
				= \lim_{\eps \darr 0} \bfN^{(1)}(z+i\eps;x,t)$$
\item		The jump relation $$\bfN^{(1)}_+(z;x,t)=\bfN^{(1)}_-(z;x,t)	
			V^{(1)}(z)$$
			 holds,
			 where $$V^{(1)}(z) = \delta_-(z)^{\sigma_3} V(z) \delta_+(z)^{-\sigma_3}.$$
			 \noindent
			The jump matrix $V^{(1)} $ is factorized as 
			\begin{align}
			\label{DNLS.V1}
			V^{(1)}(z)	=
			\begin{cases}
					\Twomat{1}{0}{-\dfrac{\delta_-^{-2} z \rhobar}{1-z |\rho|^2}  e^{-2it\theta}}{1}
					\Twomat{1}{\dfrac{\delta_+^2 \rho}{1-z |\rho|^2} e^{2it\theta}}{0}{1},
						& z \in (-\infty,\xi),\\
						\\
						\Twomat{1}{\rho \delta^2 e^{2it\theta}}{0}{1}
					\Twomat{1}{0}{-z\rhobar \delta^{-2} e^{-2it\theta}}{1},
						& z \in (\xi,\infty) .
			\end{cases}
			\end{align}
		
\end{enumerate}
\end{problem}

\begin{remark}
The jump matrix $V^{(1)}$ for Problem \ref{prob:DNLS.RHP1} has determinant $1$.
A standard argument (see  Remark \ref{rem:DNLS.RHP0}) 
 shows that Problem \ref{prob:DNLS.RHP1}
has at most one solution.
\end{remark}

\section{Deformation to a Mixed $\dbar$-Riemann-Hilbert Problem}
\label{sec:mixed}

We now seek to deform Problem \ref{prob:DNLS.RHP1} using the  
method of  Dieng and McLaughlin \cite{DM08} and Borghese, Jenkins and McLaughlin \cite{BJM16}. 
The phase function \eqref{DNLS.phase} has a single critical point
at $\xi = -x/4t$. The new contour 
\begin{equation}
\label{new-contour}
\Sigma^{(2)} = \Sigma_1 \cup \Sigma_2 \cup \Sigma_3 \cup \Sigma_4
\end{equation}
 is shown in Figure \ref{fig:contour} and consists of oriented half-lines $\xi + e^{i\phi}\bbR^+$
where $\phi = \pi/4, 3\pi/4,5\pi/4, 7\pi/4$. 

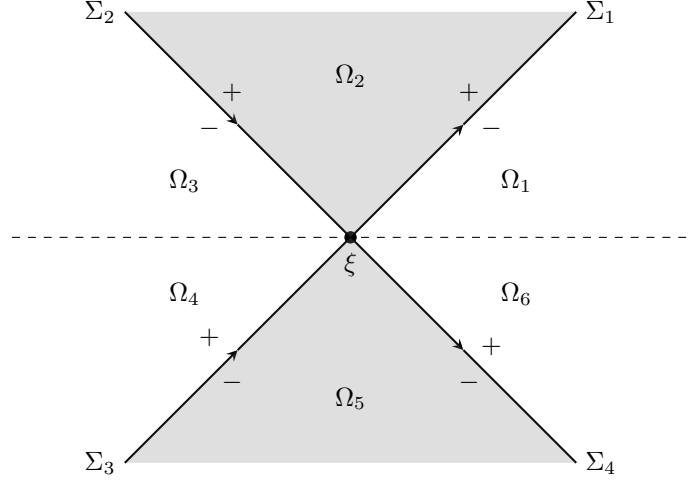
\begin{figure}[h!]
\caption{Deformation from $\mathbb{R}$ to $\Sigma^{(2)}$}
\vskip 15pt
\begin{tikzpicture}[scale=0.75]
\draw[dashed] 					(-6,0) -- (6,0);								
\draw[->,thick,>=stealth] 		(0,0) -- (2,2);								
\draw[thick]						(2,2) -- (4,4);
\draw[->,thick,>=stealth] 		(-4,4) -- (-2,2);							
\draw[thick] 						(-2,2) -- (0,0);
\draw[->,thick,>=stealth]		(-4,-4) -- (-2,-2);							
\draw[thick]						(-2,-2) -- (0,0);
\draw[thick,->,>=stealth]		(0,0) -- (2,-2);								
\draw[thick]						(2,-2) -- (4,-4);
\draw	[fill]							(0,0)		circle[radius=0.1];		
\path[fill=gray,opacity=0.25]	(0,0) -- (4,4) -- (-4,4) -- (0,0);
\path[fill=gray,opacity=0.25] (0,0) -- (-4,-4) -- (4,-4) -- (0,0);
\node[below] at (0,-0.1)			{$\xi$};
\node[right] at (4,4)					{$\Sigma_1$};
\node[left] at (-4,4)					{$\Sigma_2$};
\node[left] at (-4,-4)					{$\Sigma_3$};
\node[right] at (4,-4)				{$\Sigma_4$};
\node[right] at (2.5,1)				{$\Omega_1$};
\node[above] at (0,2.5)			{$\Omega_2$};
\node[left] at (-2.5,1)				{$\Omega_3$};
\node[left] at (-2.5,-1)				{$\Omega_4$};
\node[below] at (0,-2.5)			{$\Omega_5$};
\node[right] at (2.5,-1)				{$\Omega_6$};
\node[above] at (2.1,2.25)		{$+$};
\node[below] at (2.5,2.25)		{$-$};
\node[above] at (2.5,-2.25)		{$+$};
\node[below] at (2.1,-2.25)		{$-$};
\node[above] at (-2.1,2.25)		{$+$};
\node[below] at (-2.5,2.25)		{$-$};
\node[above] at (-2.5,-2.1)		{$+$};
\node[below] at (-2.1,-2.25)		{$-$};
\end{tikzpicture}
\label{fig:contour}
\end{figure}

In order to deform the contour $\bbR$ to the contour $\Sigma^{(2)}$, we 
 introduce a new unknown $\bfN^{(2)}$ obtained from $\bfN^{(1)}$ as
$$ \bfN^{(2)}(z) = \bfN^{(1)}(z)  \calR^{(2)}(z). $$
We  choose $\calR^{(2)}$ to remove the jump on the real axis and provide analytic jump matrices with the correct decay properties 
on the contour $\Sigma^{(2)}$. We have
$$
\bfN^{(2)}_+	=\bfN^{(1)}_+ \calR^{(2)}_+ 
				= \bfN^{(1)}_- V^{(1)} \calR^{(2)}_+ 
				= \bfN^{(2)}_- \left(\calR^{(2)}_-\right)^{-1}
						 V^{(1)} \calR^{(2)}_+
$$
so the jump matrix will be the identity matrix on $\bbR$ provided 
$$ 
(\calR^{(2)}_-)^{-1} V^{(1)} \calR^{(2)}_+ = I
$$
where $\calR_\pm^{(2)}$ are the boundary values of $\calR^{(2)}(z)$ as $\pm \Imag(z) \darr 0$. 
On the other hand, the function $e^{2it\theta}$ is exponentially increasing on $\Sigma_1$ and $\Sigma_3$, and decreasing on $\Sigma_2$ and $\Sigma_4$, while the reverse is true of $e^{-2it\theta}$. 
Hence, we choose $\calR^{(2)}$ as shown in Figure \ref{fig:R.++.+-}, where,
letting
\begin{equation} \label{eta}
\eta(z;\xi) = (z-\xi)^{i\kappa(\xi)}, 
\end{equation}
 the functions $R_1$, $R_3$, $R_4$, 
and $R_6$ satisfy 
\begin{align}
\label{R1}
R_1(z)	&=	\begin{cases}
						z\overline{\rho(z)} \delta^{-2},			
								&	z \in (\xi,\infty)\\[10pt]
						\xi \overline{\rho(\xi)} \delta_0(\xi)^{-2}\eta(z;\xi)^{-2},
								&	z	\in \Sigma_1
					\end{cases}\\[10pt]
\label{R3}
R_3(z)	&=	\begin{cases}
						-\dfrac{\delta_+^2(z)\rho(z)}{1-z|\rho(z)|^2},		
								& z \in (-\infty,\xi)\\[10pt]
					\mathrlap{-\dfrac{\delta_0^{2} \eta(z;\xi)^2 \rho(\xi)}{1-\xi|\rho(\xi)|^2},}
						\hphantom{\xi \overline{\rho(\xi)} \delta_0(\xi)^{-2}\eta(z;\xi)^{-2},}	
								& z \in \Sigma_2
					\end{cases}
					\\[10pt]
\label{R4}
R_4(z)	&=	\begin{cases}
						\mathrlap{- \dfrac{ z \overline{\rho(z)}\delta_-^{-2}}{1-z|\rho(z)|^2},}
						\hphantom{\xi \overline{\rho(\xi)} \delta_0(\xi)^{-2}\eta(z;\xi)^{-2},}	
								&	z \in (-\infty,\xi)\\[10pt]
					- \dfrac{\delta_0^{-2} \eta(z;\xi)^{-2}  \xi \overline{\rho(\xi)}}{1-\xi|\rho(\xi)|^2},
								&	z \in \Sigma_3							
					\end{cases}
					\\[10pt]
\label{R6}
R_6(z)	&=	\begin{cases}
						\mathrlap{\rho(z) \delta(z)^2 }
						\hphantom{\xi \overline{\rho(\xi)} \delta_0(\xi)^{-2}\eta(z;\xi)^{-2},}	
								&	 z  \in (\xi,\infty)\\[10pt]
					\rho(\xi)  \delta_0(\xi)^{2}\eta(z;\xi)^{2},
								&	 z \in \Sigma_4
					\end{cases}
\end{align}
The idea is to construct $R_i(z)$ in $\Omega_i$ to have the prescribed boundary values and $\dbar R_i(z)$ small in the sector. 
This will allow us to reformulate Problem \ref{prob:DNLS.RHP1} as a mixed RHP-$\dbar$ problem. We will  show how to remove the RHP component through an explicit model problem and then formulate a $\dbar$ problem for which the large-time contribution to the asymptotics of $q(x,t)$ is negligible.
Note that the values of $R_i(z)$ on the contours $\Sigma_i$ localize the scattering data to 
the stationary phase point $\xi$. This localization
corresponds to the localization of the weights  in the steepest descent method  \cite{DZ03}. The latter  
requires a delicate  analysis of modified Beals-Coifman resolvents that is  greatly simplified in the current approach.

  The following lemma and its proof are almost identical to  \cite[Lemma 4.1]{BJM16} or 
\cite[Proposition 2.1]{DM08}.   It is useful in the estimates of  the contribution of the solution of the 
$\bar \partial$-problem for large time (Section \ref{sec:dbar}).
To state it,
 we introduce the factors 
\begin{align*}
p_1(z)	&=	z \overline{\rho(z)},&
p_3(z)	&=	-\frac{\rho(z)}{1-z |\rho(z)|^2}, \\
p_4(z)	&= -\dfrac{z \overline{	\rho(z)}}{1 -z|\rho(z)|^2},	&
p_6(z)	&=	\rho(z).
\end{align*}
that appear in \eqref{R1}--\eqref{R6}.
\begin{lemma}
\label{lemma:dbar.Ri}
Suppose $\rho \in H^{2,2}(\bbR)$. There exist functions $R_i$ on $\Omega_i$, $i=1,3,4,6$ satisfying \eqref{R1}--\eqref{R6}, so that
$$ 
|\dbar R_i(z)| \lesssim
	\begin{cases}
			\left( |p_i'(\Real(z))| - \log|z-\xi| \right), 	
			&	z \in \Omega_i, \quad  |z-\xi| \leq 1\\
			\\
			\left( |p_i'(\Real(z))+ |z-\xi|^{-1}	\right),	
			&	z \in \Omega_i, \quad |z-\xi| >1,
	\end{cases}
$$ 
where the implied constants are uniform in  $\xi \in \bbR$ and  $\rho $ in a fixed bounded subset of $H^{2,2}(\bbR)$ with $1-z|\rho(z)|^2 \geq c>0$ for a fixed constant $c$.
\end{lemma}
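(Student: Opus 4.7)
The strategy is to extend each boundary formula into the interior of its sector by a smooth angular interpolation, verify that $\dbar$ then hits only non-holomorphic factors, and bound the two resulting pieces separately. I will spell out the construction for $R_1$ on $\Omega_1$; the other three sectors are analogous (with $\delta$ replaced by $\delta_\pm$ or its reciprocal, and $\eta$ likewise).

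Introduce polar coordinates $z = \xi + r e^{i\phi}$ centered at the stationary phase point, so that $\Omega_1$ corresponds to $\phi \in (0,\pi/4)$, with $\phi=0$ being the real half-line $(\xi,\infty)$ and $\phi=\pi/4$ being $\Sigma_1$. Fix a smooth cutoff $\mathcal{X}\colon[0,\pi/4]\to[0,1]$ with $\mathcal{X}(0)=1$ and $\mathcal{X}(\pi/4)=0$, and define
\[
R_1(z) \;=\; \mathcal{X}(\phi)\, p_1(\Real z)\, \delta(z)^{-2} \;+\; \bigl(1-\mathcal{X}(\phi)\bigr)\, p_1(\xi)\, \delta_0(\xi)^{-2}\, \eta(z;\xi)^{-2}.
\]
By construction $R_1$ interpolates the prescribed boundary values in \eqref{R1}. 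The factors $\delta(z)^{-2}$ and $\eta(z;\xi)^{-2}$ are holomorphic in $\Omega_1$, so using $\dbar = \tfrac12 e^{i\phi}(\partial_r + i r^{-1}\partial_\phi)$ and $\Real z = \xi + r\cos\phi$, a direct computation gives
\[
\dbar R_1(z) = \tfrac12 \mathcal{X}(\phi)\,p_1'(\Real z)\,\delta(z)^{-2} + \frac{i e^{i\phi}}{2r}\,\mathcal{X}'(\phi)\,\bigl[p_1(\Real z)\,\delta(z)^{-2} - p_1(\xi)\,\delta_0(\xi)^{-2}\,\eta(z;\xi)^{-2}\bigr].
\]

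The first term is bounded by $|p_1'(\Real z)|$ up to a constant depending only on $\|\kappa\|_\infty$, via Lemma \ref{lemma:delta}(ii). For the second term, split the bracket as
\[
\bigl[p_1(\Real z) - p_1(\xi)\bigr]\,\delta(z)^{-2} \;+\; p_1(\xi)\,\bigl[\delta(z)^{-2} - \delta_0(\xi)^{-2}\eta(z;\xi)^{-2}\bigr].
\]
For the first piece, write $p_1(\Real z) - p_1(\xi) = \int_{\xi}^{\Real z} p_1'(s)\,ds$; since $\rho\in H^{2,2}(\bbR) \hookrightarrow C^{1,1/2}(\bbR)$, $p_1$ is $C^1$ with $p_1'$ locally bounded uniformly in the parameters of Paper I, yielding a bound of size $r$ (the factor $\cos \phi$ being harmless) times a constant controlled by $\|\rho\|_{H^{2,2}}$ and $c$. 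Dividing by $r$ therefore contributes $O(1)$. For the second piece, Lemma \ref{lemma:delta}(iv) gives
\[
\bigl|\delta(z)^{-2} - \delta_0(\xi)^{-2}\eta(z;\xi)^{-2}\bigr| \;\lesssim\; -|z-\xi|\,\log|z-\xi|
\]
as $z\to\xi$ along any ray in $\Omega_1$, with the implied constant depending only on $\|\rho\|_{H^{2,2}}$ (uniform in $\xi$). Dividing by $r$ therefore yields the $-\log|z-\xi|$ term for $|z-\xi|\le 1$.

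For $|z-\xi|>1$, one uses instead the global bounds $|\delta^{\pm 2}|,|\eta^{\pm 2}|\le e^{\|\kappa\|_\infty}$ from Lemma \ref{lemma:delta}(ii) and boundedness of $p_1(\Real z)$ by $\|\rho\|_{H^{2,2}}$ (recalling the definition of $p_1$ and that $\rho$ decays), so the bracket is $O(1)$ and division by $r$ gives the $|z-\xi|^{-1}$ term. Combining both regimes gives the stated bound. The analogous constructions for $R_3, R_4, R_6$ only change signs and replace $\delta$ by its reciprocal on the appropriate branch; the computation and estimates are identical.

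The main technical obstacle is keeping the constants \emph{uniform in $\xi\in\bbR$}. This is exactly where Lemma \ref{lemma:delta}(iv) is indispensable: the uniform logarithmic rate at which $\delta(z)^{-2}$ approaches $\delta_0(\xi)^{-2}\eta(z;\xi)^{-2}$ is what prevents a spurious $|\log|z-\xi||$ from blowing up as $\xi$ varies, and what makes the $\dbar$-weight $W$ integrable in the Neumann-series bound required later in Section \ref{sec:dbar}.
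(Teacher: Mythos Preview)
Your construction is essentially identical to the paper's: both interpolate the two boundary values angularly and then estimate the resulting $\dbar$-derivative via Lemma~\ref{lemma:delta}(iv). The paper simply makes the specific choice $\mathcal{X}(\phi)=\cos 2\phi$ (so that $\mathcal{X}'(\phi)=-2\sin 2\phi$), writes the interpolant as $R_1(z)=\bigl(f_1(z)+[p_1(\Real z)-f_1(z)]\cos 2\phi\bigr)\delta(z)^{-2}$ with $f_1(z)=p_1(\xi)\delta_0^{-2}\eta^{-2}\delta^2$, and arrives at the same two-term expression for $\dbar R_1$ that you obtain.
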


\begin{remark}
\label{rem:dbar.Ri}
By adjusting numerical constants, we can rewrite the estimate on $\dbar R_i$ for $|z-\xi| > 1$ as 
$$ \left| \dbar R_i \right| \lesssim |p_i'(\Real(z))| + (1+|z-\xi|^2)^{-1/2}. $$
\end{remark}

\begin{proof}
We  give the construction for $R_1$. Define $f_1(z)$ on $\Omega_1$ by
$$ f_1(z) = p_1(\xi) \delta_0^{-2} (\xi) \eta(z;\xi)^{-2} \delta(z)^{2} $$
and let
 $$ R_1(z) = \left( f_1(z) + \left[ p_1(\Real(z)) - f_1(z) \right] \cos 2\phi \right) \delta(z)^{-2} $$
where $\phi = \arg (z-\xi)$. It is easy to see that $R_1$ as constructed has the boundary values \eqref{R1}.
Writing $z-\xi = r e^{i\phi}$ we have
$$ \dbar = \frac{1}{2}\left( \frac{\dee}{\dee x} + i \frac{\dee}{\dee y} \right)
			=	\frac{1}{2} e^{i\phi} \left( \frac{\dee}{\dee r} + \frac{i}{r} \frac{\dee}{\dee \phi} \right).
$$
We therefore have 
$$ 
\dbar R_1 (z) =  \frac{1}{2}  p_1'(\Real z) \cos 2\phi  ~ \delta(z)^{-2} -
		\left[ p_1(\Real z) - f_1(z) \right]\delta(z)^{-2}  \frac{ie^{i\phi}}{|z-\xi|}  \sin 2\phi. 
$$
It follows from Lemma \ref{lemma:delta}(iv) that
$$ 
 \left|\left( \dbar R_1 \right)(z)  \right| \lesssim_{\, \rho}
\begin{cases}
	|p_1'(\Real z)| - \log|z-\xi|	,			&	|z-\xi| \leq 1,\\
	\\
	|p_1'(\Real z)| + \dfrac{1}{|z-\xi|},		&	|z-\xi|  > 1,
\end{cases}
$$
where the implied constants depend on $\inf_{z \in \bbR} (1-z |\rho(z)|^2)$ and $\norm{\rho}{H^{2,2}}$. 
The remaining constructions are similar.
\end{proof}

The unknown $\bfN^{(2)}$ satisfies a mixed $\dbar$-RHP. We first compute the jumps of $\bfN^{(2)}$ along the contour $\Sigma^{(2)}$ with the given orientation, remembering that $\bfN^{(1)}$ is analytic there so that the jumps are determined entirely by the change of variables.
Diagrammatically, the jump matrices are as in Figure \ref{fig:jumps}. Away from $\Sigma^{(2)}$ we   have 
\begin{equation}
\label{N2.dbar}
 \dbar \bfN^{(2)} = \bfN^{(2)} \left( \calR^{(2)} \right)^{-1} \dbar \calR^{(2)} = \bfN^{(2)} \dbar \calR^{(2)} 
 \end{equation}
where the last step follows by triangularity.

\begin{problem}
\label{prob:DNLS.RHP.dbar}
Given $\rho \in H^{2,2}(\bbR)$ with $1-z|\rho(z)|^2 > 0$ for all $z \in \bbR$, find a row vector-valued function $\bfN^{(2)}(z;x,t)$ on $\bbC \setminus \bbR$ with the following properties:
\begin{enumerate}
\item		$\bfN^{(2)}(z;x,t) \rarr (1,0)$ as $|z| \rarr \infty$ in $\bbC \setminus \Sigma^{(2)}$,
\item		$\bfN^{(2)}(z;x,t)$ is continuous for $z \in  \bbC \setminus \Sigma^{(2)}$
			with continuous boundary values 
			$\bfN^{(2)}_\pm(z;x,t) $
			(where $\pm$ is defined by the orientation in Figure \ref{fig:contour})
\item		The jump relation $\bfN^{(2)}_+(z;x,t)=\bfN^{(2)}_-(z;x,t)	
			V^{(2)}(z)$ holds, where
			$V^{(2)}(z)	$ is given in Figure \ref{fig:jumps},
\item		The equation 
			$$
			\dbar \bfN^{(2)} = \bfN^{(2)} \, \dbar \calR^{(2)}
			$$ 
			holds in $\bbC \setminus \Sigma^{(2)}$, where
			$$
			\dbar \calR^{(2)}=
				\begin{doublecases}
					\Twomat{0}{0}{(\dbar R_1) e^{-2it\theta}}{0}, 	& z \in \Omega_1	&&
					\Twomat{0}{(\dbar R_3)e^{2it\theta}}{0}{0}	,	& z \in \Omega_3	\\
					\\
					\Twomat{0}{0}{(\dbar R_4)e^{-2it\theta}}{0},	&	z \in \Omega_4	&&
					\Twomat{0}{(\dbar R_6)e^{2it\theta}}{0}{0}	,	&	z	\in \Omega_6	 \\
					\\
					0	&\hspace{-5pt} \text{ otherwise.}	
				\end{doublecases}
			$$
\end{enumerate}
\end{problem}

%
%


\begin{figure}[h!]
\caption{Jump Matrices  $V^{(2)}$  for $\textbf{N}^{(2)}$}
\vskip 15pt
\begin{tikzpicture}[scale=0.8]
\draw[dashed] 				(-6,0) -- (6,0);							
\draw[->,thick,>=stealth] 	(0,0) -- (1.5,1.5);						
\draw[thick]					(1.5,1.5) -- (3,3);
\draw[->,thick,>=stealth]	(-3,3) -- (-1.5,1.5);					
\draw[thick]					(-1.5,1.5) -- (0,0);
\draw[->,thick,>=stealth]	(-3,-3) -- (-1.5,-1.5);					
\draw[thick]					(-1.5,-1.5) -- (0,0);
\draw[->,thick,>=stealth]	(0,0) -- (1.5,-1.5);					
\draw[thick]					(1.5,-1.5) -- (3,-3);
\draw[fill]						(0,0)	circle[radius=0.075];		
\node [below] at  			(0,-0.15)		{$\xi$};
\node[right] at					(3.2,3)		{$\unitlower{-R_1 e^{-it\theta}}$};
\node[left] at					(-3.2,3)		{$\unitupper{-R_3 e^{it\theta}}$};
\node[left] at					(-3.2,-3)		{$\unitlower{R_4 e^{-2it\theta}}$};
\node[right] at					(3.2,-3)		{$\unitupper{R_6 e^{-2it\theta}}$};
\node[above] at 				(1.4,1.6)	{$+$};
\node[below] at				(1.7,1.4)	{$-$};
\node[above] at				(-1.4,1.6)	{$+$};
\node[below] at				(-1.7,1.4)	{$-$};
\node[above] at 				(-1.7,-1.4)	{$+$};
\node[below] at				(-1.4,-1.6)	{$-$};
\node[above] at				(1.7,-1.4)	{$+$};
\node[below] at				(1.4,-1.6)	{$-$};
\node[left] at					(2.5,3)		{$\Sigma_1$};
\node[right] at					(-2.5,3)		{$\Sigma_2$};
\node[right] at					(-2.5,-3)		{$\Sigma_3$};
\node[left] at					(2.5,-3)		{$\Sigma_4$};
\end{tikzpicture}
\label{fig:jumps}
\end{figure}

%
%

\section{The Model Riemann-Hilbert Problem}
\label{sec:model}
 The next step is to extract from $\bfN^{(2)}$  a contribution that is a pure RHP. We write 
\begin{equation*}
\bfN^{(2)} = \bfN^{(3)} \bfN^{\RHP}
\end{equation*}
and we request that $\bfN^{(3)} $ has no jump. Thus we look for $ \bfN^{\RHP}$ solution of the model  RHP \ref{DNLS.RHP.Model} below
with the jump matrix $V^\RHP=V^{(2)}$.   Unlike the previous RHP's, we seek a matrix-valued solution.

In the following RHPs (Problems \ref{DNLS.RHP.Model}, \ref{prob:DNLS.V.RHP}, \ref{prob-model}),
 $\xi$ is fixed, and we assume  that $1-\xi |\rho(\xi)|^2 > 0$. This is a spectral  condition,  automatically satisfied if $\xi>0$  
 (i.e.  if $x$ and $t$ have the same sign),
 but  imposed on  the spectral data $\rho$, to address the cases  where $x$ and $t$ have opposite signs.
\begin{problem}
\label{DNLS.RHP.Model}
Find a $2\times 2$ matrix-valued function $\bfN^\RHP(z;\xi)$, analytic on $\bbC \setminus \Sigma^{(2)}$,
with the following properties:
\begin{enumerate}
\item	$\bfN^\RHP(z;\xi) \rarr I$ as $|z| \rarr \infty$ in $\bbC \setminus \Sigma^{(2)}$, where $I$ is the $2\times 2$ identity matrix,
\item	$\bfN^\RHP(z;\xi)$ is analytic for $z \in \bbC \setminus \Sigma^{(2)}$ with continuous boundary values $\bfN^\RHP_\pm$
		on $\Sigma^{(2)}$,
\item	The jump relation $\bfN^\RHP_+(z;\xi) = \bfN^\RHP_-(z;\xi) V^\RHP(z)$ holds on $\Sigma^{(2)}$, where
		\begin{equation*}	
		V^\RHP(z) =	V^{(2)}(z).
		\end{equation*}
\end{enumerate}
\end{problem}
Now set 
\begin{equation} \label{zeta-z}
\zeta(z)=\sqrt{8t}(z-\xi)
\end{equation}
and 
\begin{equation}\label{r-xi}
r_{\xi}=\rho(\xi)\delta_0^2 e^{-2i\kappa(\xi)\log\sqrt{8t}}e^{4it\xi^2}.
\end{equation}
Under the change of variables \eqref{zeta-z}, the phase $e^{2it\theta}$ identifies to $e^{-i\zeta^2/2} e^{ix^2/4t}$. The factor $e^{-i\zeta^2/2}$ will be later important in the identification of parabolic  cylinder functions.

By abuse of notation, set $\bfN^\RHP( \zeta(z) ;\xi) =\bfN^\RHP(z;\xi)$ where $\zeta$ is given by \eqref{zeta-z}. We can then recast Problem \ref{DNLS.RHP.Model} as follows.

\begin{problem}
\label{prob:DNLS.V.RHP}
Find a $2\times 2$ matrix-valued function $\bfN^\RHP( \zeta(z);\xi)$, analytic on $\bbC \setminus \Sigma^{(2)}$,
with the following properties:
\begin{enumerate}
\item	$\bfN^\RHP(\zeta(z);\xi) \rarr I$ as $|z| \rarr \infty$ in $\bbC \setminus \Sigma^{(2)}$, where $I$ is the $2\times 2$ identity matrix,
\item	$\bfN^\RHP( \zeta(z);\xi)$ is analytic for $z \in \bbC \setminus \Sigma^{(2)}$ with continuous boundary values $\bfN^\RHP_\pm$
		on $\Sigma^{(2)}$,
\item	The jump relation 
		 $\bfN^\RHP_+( \zeta(z) ;\xi) = \bfN^\RHP_-(\zeta(z) ;\xi) V^\RHP(\zeta(z) ;\xi)$ holds on $\Sigma^{(2)}$, where
\begin{equation*}	
V^\RHP( \zeta(z);\xi) =
			\begin{cases} 
				\Twomat{1}{0}
							{-\xi \overline{r_\xi} \,\zeta^{-2i\kappa(\xi)}e^{i\zeta^2/2}}
							{1},					
				&	z 	\in \Sigma_1,\\
				\\
				\Twomat{1}
							{\dfrac{r_\xi}{1-\xi|r_\xi|^2}\, \zeta^{2i\kappa(\xi)}e^{-i\zeta^2/2}}
							{0}{1},	
				&	z	\in \Sigma_2\\
				\\
				\Twomat{1}{0}
							{ \dfrac{ -\xi \overline{r_\xi}}{1-\xi|r_\xi|^2}\, \zeta^{-2i\kappa(\xi)}e^{i\zeta^2/2}}
							{1},					
				&	z 	\in \Sigma_3,\\
				\\
				\Twomat{1}
				{r_\xi \,\zeta^{2i\kappa(\xi)}e^{-i\zeta^2/2}}
						{0}{1},		
				&	z 	\in \Sigma_4.
			\end{cases}
\end{equation*}

\end{enumerate}

\end{problem}

It is possible to further reduce the RHP for $\bfN^\RHP(\zeta;\xi) $ to a model RHP whose $2 \times 2 $ 
matrix solution is piecewise analytic in the upper and 
lower complex plane.  In each half-plane, the entries of the matrix satisfy ODEs that are obtained from analyticity properties as
well as the large-$\zeta$ behavior. The  solutions  of the ODEs are explicitly calculated in terms of 
parabolic cylinder functions.
This transformation is standard
and has been performed for NLS and mKdV (see, for example, \cite{DIZ93,DZ93,DZ94,Its81}).
Let 
\begin{equation}\label{tildeN}
 \bfN ^\RHP (\zeta;\xi) =  \Phi(\zeta;\xi)  \mathcal{P}(\xi) e^{\frac{i}{4} \zeta^2 \sigma_3} \zeta^{-i\kappa \sigma_3}. 
\end{equation}
where 
\begin{equation}
\label{P-matrix}
\mathcal{P}(\xi) =
\begin{doublecases}
\Twomat{1}{0}
						{\xi \, \overline{r_\xi}}	{1},					
				&	z 	\in \Omega_1 &&
				\Twomat{1}
							{\dfrac{-r_\xi}{1-\xi|r_\xi|^2}\,  }{0}
							{1},		
				&	z	\in \Omega_3,\\
				\\
				\Twomat{1}{0}
							{\dfrac{- \xi \overline{r_\xi}}{1-\xi|r_\xi|^2}\, }
							{1},					
				&	z 	\in \Omega_4, &&
				\Twomat{1}{r_\xi}
						{0}
							{1},		
				&	z 	\in \Omega_6, \\ \\
                                     \Twomat{1}
							{0}
							{0}
							{1},	
				\qquad	z	\in \Omega_2 \cup \Omega_5.  \hspace{-2cm} \\	
\end{doublecases}
\end{equation}
By construction, the matrix $\Phi$ is continuous along the rays of $\Sigma^{(2)}$. Let us set up the RHP it satisfies and 
 compute
its jumps along the real axis. We have along the real axis 
\begin{equation}
\Phi_+  =      ~\Phi_-\Big( \mathcal{P}  \,e^{i\sigma_3 \zeta^2/4} \zeta^{-i\kappa(\xi)\sigma_3} \Big)_-   \Big(\,e^{-i\sigma_3\zeta^2/4}\zeta^{i\kappa(\xi)\sigma_3} \mathcal{P}^{-1}  \Big)_+
\end{equation}
 Due to the branch cut of the logarithmic function along 
$\bbR^-$, we have
 along the negative real axis,
 \begin{align*}
 ( \zeta ^{-i \kappa(\xi) \sigma_3})_-   ( \zeta ^{ i \kappa(\xi) \sigma_3})_+ =  e^{-2\pi \kappa(\xi) \sigma_3}= e^{ \log (1-\xi|r_\xi|^2 )\sigma_3} 
 \end{align*}
 while along the positive real axis,
 \begin{align*}
 ( \zeta ^{-i \kappa(\xi) \sigma_3})_-   ( \zeta ^{i \kappa(\xi) \sigma_3})_+ =  \ {\rm I}. 
 \end{align*}
This implies that the matrix $\Phi$ has  the same (constant)  jump matrix along the negative and positive real axis:
 \begin{equation}
 V^{(0)} =  \Twomat{1-\xi\,|r_\xi|^2}{r_\xi}{-\xi\,\overline{r}_\xi}{1} .
\end{equation}
Note that the matrix $V^{0}$ is similar to the jump matrix $V^{(1)}$ of the original RHP \ref{prob:DNLS.RH0} (see \eqref{DNLS.V}).
The effect of our sequence of transformations is that, in the large $t$  limit, the entries have been replaced by their localized version at the stationary phase
 point $\xi$.

The $2\times 2$ matrix $\Phi$ satisfies the following model RHP.
\begin{problem}
\label{prob-model}
Find a $2\times 2$ matrix-valued function $\Phi(z;\xi)$, analytic on $\bbC \setminus \bbR$,
with the following properties:
\begin{enumerate}

\item	$\Phi(\zeta;\xi) \sim e^{-\frac{i}{4} \zeta^2 \sigma_3} \zeta^{i\kappa \sigma_3}$ as $|\zeta| \rarr \infty$ in $\bbC \setminus \bbR$.
\item	$\Phi(\zeta;\xi)$ is analytic for $z \in \bbC \setminus \bbR$ with continuous boundary values $\Phi_\pm$
		on $\bbR$,
		
\item	The jump relation along the real axis is 
\begin{equation}\label{jump-Phi}
\Phi_+(\zeta;\xi) = \Phi_-(\zeta;\xi) V^{(0)}
\end{equation}
\end{enumerate}
\end{problem}

To solve this problem, we need to be more precise about the behavior of $\Phi(z) $ as $\zeta  \to \infty$.
We write the 
 large-$\zeta$
behavior of $\Phi$ in the form
\begin{equation}\label{asym}
\Phi (\zeta) \sim    
\left(
	1 +\frac{m^0}{\zeta}
\right) ~ \zeta^{i\kappa \sigma_3} e^{-i\sigma_3 \zeta^2/4}, \quad \zeta \rarr \infty. 
\end{equation}
At this step of the calculation, 
 $m^0$
is unknown. It will be determined later 
when enforcing the jump conditions of the matrix $\Phi$ along the real axis.

We now compute the solution $\Phi$ in terms of parabolic cylinder functions by deriving differential equations for the entries of $\Phi$ and exploiting the required asymptotics.

\begin{lemma}
The entries of $\Phi$ obey the differential equations
\begin{align}
&{\Phi_{11}}'' + \left(\frac{\zeta^2}{4} -\beta_{12}\beta_{21} +\frac{i}{2}\right) \Phi_{11} =0  \label{m11} \\[3pt]
&{\Phi_{21}}'' + \left(\frac{\zeta^2}{4} -\beta_{12}\beta_{21} -\frac{i}{2}\right) \Phi_{21} =0  \label{m21} \\[3pt]
&{\Phi_{12}}'' + \left(\frac{\zeta^2}{4} -\beta_{12}\beta_{21} +\frac{i}{2}\right) \Phi_{12} =0  \label{m12} \\[3pt]
&{\Phi_{22}}'' + \left(\frac{\zeta^2}{4}  -\beta_{12}\beta_{21} -\frac{i}{2}\right) \Phi_{22} =0  \label{m22} 
\end{align}

\end{lemma}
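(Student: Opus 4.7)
My plan is to exploit the fact that the jump matrix $V^{(0)}$ in \eqref{jump-Phi} is constant (independent of $\zeta$) to produce an entire first-order linear ODE for $\Phi$, and then differentiate once more to get the scalar second-order equations claimed.

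\textbf{Step 1 (eliminate the jump).} Set $A(\zeta) := \Phi'(\zeta)\Phi(\zeta)^{-1}$. Because $V^{(0)}$ is constant, differentiating \eqref{jump-Phi} gives $\Phi_+' = \Phi_-' V^{(0)}$, so
$$
\Phi_+' \Phi_+^{-1} \;=\; \Phi_-' V^{(0)} (V^{(0)})^{-1} \Phi_-^{-1} \;=\; \Phi_-' \Phi_-^{-1}.
$$
Hence $A$ has no jump across $\bbR$ and is therefore entire.

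\textbf{Step 2 (compute the asymptotics of $A$).} Writing $H(\zeta) = I + m^0/\zeta + O(\zeta^{-2})$ and $T(\zeta) = \zeta^{i\kappa\sigma_3}e^{-i\sigma_3\zeta^2/4}$, so that $\Phi \sim H T$ by \eqref{asym}, we compute
$$
A \;=\; H' H^{-1} + H\,T' T^{-1} H^{-1} \;=\; H' H^{-1} + H\!\left(\tfrac{i\kappa}{\zeta}\sigma_3 - \tfrac{i\zeta}{2}\sigma_3\right)\!H^{-1}.
$$
Expanding $H\sigma_3 H^{-1} = \sigma_3 - \zeta^{-1}[\sigma_3,m^0] + O(\zeta^{-2})$ and using $H'H^{-1} = O(\zeta^{-2})$, I obtain
$$
A(\zeta) \;=\; -\tfrac{i\zeta}{2}\sigma_3 \,+\, \beta \,+\, \tfrac{i\kappa}{\zeta}\sigma_3 \,+\, O(\zeta^{-1}), \qquad \beta := \tfrac{i}{2}[\sigma_3, m^0],
$$
which matches \eqref{m.to.beta}; note $\beta$ is off-diagonal with $\beta_{12}=i m^0_{12}$, $\beta_{21}=-i m^0_{21}$.

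\textbf{Step 3 (Liouville).} Since $A$ is entire and $A(\zeta) + \frac{i\zeta}{2}\sigma_3$ is bounded at infinity by Step 2, Liouville's theorem forces
$$
A(\zeta) \;=\; -\tfrac{i\zeta}{2}\sigma_3 + \beta
$$
exactly (the coefficient of $\zeta^{-1}\sigma_3$ in the expansion must vanish identically; this is an automatic consistency condition coming from the fact that $A$ is a polynomial). Equivalently, $\Phi$ satisfies the first-order matrix ODE $\Phi' = \bigl(-\tfrac{i\zeta}{2}\sigma_3 + \beta\bigr)\Phi$, which in components reads
$$
\Phi_{11}' = -\tfrac{i\zeta}{2}\Phi_{11} + \beta_{12}\Phi_{21},\quad \Phi_{21}' = \tfrac{i\zeta}{2}\Phi_{21} + \beta_{21}\Phi_{11},
$$
and analogously for the second column.

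\textbf{Step 4 (second-order ODEs).} Differentiating each component equation once more and substituting the first-order relations eliminates the off-diagonal entries, producing a closed scalar equation. For $\Phi_{11}$:
$$
\Phi_{11}'' = -\tfrac{i}{2}\Phi_{11} - \tfrac{i\zeta}{2}\Phi_{11}' + \beta_{12}\Phi_{21}'
= -\tfrac{i}{2}\Phi_{11} - \tfrac{\zeta^2}{4}\Phi_{11} + \beta_{12}\beta_{21}\Phi_{11},
$$
which is \eqref{m11}; the cross term $\mp \frac{i\zeta}{2}\beta_{12}\Phi_{21}$ cancels. The other three equations \eqref{m21}--\eqref{m22} follow by identical bookkeeping, with the $\pm i/2$ term tracking the sign of the diagonal entry of $\sigma_3$ acting on that component.

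The only delicate point is the asymptotic calculation in Step 2, where one must be careful that the $O(\zeta^{-1})$ term in $A$ vanishes (as forced by Liouville after Step 3) rather than introducing an extra $\zeta^{-1}$ piece into the ODE; this is guaranteed once one knows $A$ is a first-degree polynomial, but it is worth verifying directly that the $\zeta^{-1}$ coefficient from $H'H^{-1} + \frac{i\kappa}{\zeta}H\sigma_3 H^{-1} - \frac{i\zeta}{2}H\sigma_3 H^{-1}$ at order $\zeta^{-1}$ is consistent.
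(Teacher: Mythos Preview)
Your proof is correct and follows essentially the same route as the paper: exploit the constant jump $V^{(0)}$ to show that a suitable logarithmic derivative of $\Phi$ is entire, identify it as a degree-one polynomial in $\zeta$ via the asymptotics \eqref{asym} and Liouville, arrive at the first-order system \eqref{ode}, and then eliminate to get the four scalar second-order equations. The only cosmetic differences are that the paper works with $(\Phi' + \tfrac{i\zeta}{2}\sigma_3\Phi)\Phi^{-1}$ rather than your $A=\Phi'\Phi^{-1}$ (so the leading $\zeta$ term is already subtracted off), computes the large-$\zeta$ behavior by passing back through the factorization \eqref{tildeN} to $\bfN^{\RHP}$ rather than expanding $HT$ directly, and first argues $\det\Phi\equiv 1$ (by Liouville again) to justify the existence of $\Phi^{-1}$---a point you use without comment but which is easy to fill in.
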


\begin{proof}
Differentiating \eqref{jump-Phi} with respect to $\zeta$, we obtain
\begin{equation*}
\Big( \frac{d\Phi}{d\zeta} + \frac{1}{2} i \zeta \sigma_3 \Phi \Big)_+ =  \Big( \frac{d\Phi}{d\zeta} + \frac{1}{2}i  \sigma_3\zeta \Phi\Big)_- V^{(0)}.
\end{equation*}
We  know that $ {\rm det } ~V^{(0)} =1$, thus ${\rm det} ~ \Phi_+= {\rm det}~ \Phi_-$ and ${\rm det} ~\Phi$ is analytic in the whole complex plane.
It is equal to one at infinity, thus by Liouville theorem,  ${\rm det} ~ \Phi =1$. 
 It follows that $(\Phi)^{-1}$ exists and is bounded.
The matrix 
$\Big( \displaystyle{\frac{d\Phi}{d\zeta}} + \frac{ i}{2}  \sigma_3 \zeta\Phi \Big) \Phi^{-1} $  has no jump along the 
real line and is therefore an entire function of $\zeta$.
Let us compute its behavior at infinity. Returning to \eqref{tildeN}, we have that 
\begin{align} \label{inf1}
\Big(\frac{d\Phi}{d\zeta} + \frac{ i \zeta}{2}  \sigma_3 \Phi \Big) \Phi^{-1} 
	&= 	\Big( \frac{d  \bfN ^\RHP }{d\zeta} 
				+ \bfN ^\RHP \frac{i\kappa\sigma_3}{\zeta} \Big) 
					(\bfN^ \RHP)^{-1}  \\
			\nonumber
	&\quad
				+\frac{i\zeta}{2} \left[\sigma_3, \bfN ^\RHP\right] 
				 (\bfN^\RHP)^{-1}.
\end{align}
The first term in the right-hand side of \eqref{inf1} tends to 0
as $\zeta \rarr \infty$, while the second term behaves like
 $O(1/\zeta)$. 
For the last term in the  right-hand side of \eqref{inf1}, we use that 
\begin{equation*}
\label{asym-N}
\bfN ^\RHP(\zeta) \sim    \left(1 +\frac{m^{(0)}}{\zeta} \right).
\end{equation*}
Defining
\begin{equation*}
\beta \equiv \frac{i }{2} [\sigma_3,\bfN ^\RHP_{(1)}]  = \ \Twomat{0}
							{i m_{12}^{(0)}}
							{- im_{21}^{(0)}}   
							{0}	
\end{equation*}
Equivalently,  { $\beta_{12} = i m_{12}^{(0)} $ and 
$\beta_{21} =  -i m_{21}^{(0)}$}. 
Again applying Liouville's theorem, the $2\times 2$ matrix $\Phi$ 
satisfies the ODE:
\begin{equation} 
\label{ode}
\frac{d\Phi}{d\zeta} + \frac{ i \zeta}{2}  \sigma_3 \Phi  =\beta \Phi
\end{equation}
where $\beta$ is an off-diagonal matrix.  

The system  \eqref{ode} decouples into two first-order systems for $(\Phi_{11}, \Phi_{21})$ and $(\Phi_{12}, \Phi_{22})$,
\begin{align}
\label{psi1}
\begin{cases}
\dfrac{d\Phi_{11}}{d\zeta} + \frac{1}{2}i \zeta \Phi_{11} &= \beta_{12} \Phi_{21} \\
\\
\dfrac{d\Phi_{21}}{d\zeta} -\frac{1}{2} i \zeta \Phi_{21} &= \beta_{21} \Phi_{11}
\end{cases}
\end{align}
and
\begin{align}
\label{psi2}
\begin{cases}
\dfrac{d\Phi_{12}}{d\zeta} +\frac{1}{2} i \zeta \Phi_{12} &= \beta_{12} \Phi_{22}    \\
\\
\dfrac{d\Phi_{22}}{d\zeta} - \frac{1}{2}i \zeta \Phi_{22} &= \beta_{21} \Phi_{12}.
\end{cases}
\end{align}
Combining the above equations, one obtains that  
the  entries of $\Phi$ satisfy  \eqref{m11}-\eqref{m22}.
\end{proof}

The next step is to complement the  ODEs  with additional  conditions taking into  account the conditions at infinity  as well as the jump conditions of $\Phi$. This will determine $\Phi$ uniquely and will identify the coefficients $\beta_{12}, \beta_{21}$.

The  parabolic cylinder equation is  
\begin{equation} \label{para-cyl}
 y'' + \left(-\frac{z^2}{4} + a + \frac{1}{2} \right) y =0
\end{equation}
The  parabolic cylinder functions $D_a(z)$, $D_a(-z)$, $D_{-a-1}(iz)$, $D_{-a-1} (-iz)$  all satisfy \eqref{para-cyl} and are entire for any value $a$.

The  large-$z$ behavior of $D_a(z)$ is given by the following formulas.
\footnote{Writing $D_a(z)= U_{-a-1/2}(z)$ (see  \url{http://dlmf.nist.gov/12.1}), these formulae follow from
\url{http://dlmf.nist.gov/12.9.E1} and \url{http://dlmf.nist.gov/12.9.E3}.}
\begin{equation}
\label{para-123}
D_a (z) \sim 
	\begin{cases}
		z^{a} e^{-z^2/4}\ ,  
			& |\arg (z) | <\dfrac{3\pi}{4} \\
		z^{a} e^{-z^2/4}- 
			\dfrac{\sqrt{2\pi}}{\Gamma(-a)} e^{ia \pi} z^{-a-1} e^{z^2/4}\ ,
			& \dfrac{\pi}{4}  < \arg (z)  < \dfrac{5\pi}{4} \\
 		 z^{a} e^{-z^2/4}  - 
 		 	\dfrac{\sqrt{2\pi}}{\Gamma(-a)}
 		 		 e^{-ia \pi} z^{-a-1} e^{z^2/4} \ ,   
			&	-\dfrac{5\pi}{4}  < \arg (z)  < -\dfrac{\pi}{4} . 
	\end{cases}
\end{equation}

\begin{proposition}\label{explicit}
The unique solution to Problem \ref{prob-model} is given by
\begin{equation}
\label{SimpliPhi+}
 \Phi (\zeta;\xi)= \Twomat{e^{-\frac{3\pi}{4}\kappa}
				D_{i\kappa}(\zeta e^{-3i\pi/4})}
			{\dfrac{e^{\frac{\pi}{4}(\kappa-i)}}{\beta_{21} }
				 (-i\kappa) D_{-i\kappa-1}(\zeta e^{-\pi i/4})}
			{\dfrac{e^{-\frac{3\pi}{4}(\kappa+i)}}{\beta_{12}}
				i\kappa D_{i\kappa -1}(\zeta e^{-3i\pi/4})}
			{e^{\pi\kappa/4}D_{-i\kappa}(\zeta e^{-i\pi/4})}	
\end{equation}
for $\Imag(\zeta)>0$ and
\begin{equation}
\label{SimpliPhi-}
 \Phi (\zeta;\xi)= \Twomat{e^{\pi\kappa/4}D_{i\kappa}(\zeta e^{\pi i/4})}
			{-\dfrac{i\kappa}{\beta_{21}} 
				e^{-\frac{3\pi}{4}(\kappa-i)}
				D_{-i\kappa-1}(\zeta e^{3i\pi/4})}
			{\dfrac{(i\kappa)}{\beta_{12}}
				e^{\frac{\pi}{4}(\kappa+i)}
				D_{i\kappa-1}(\zeta e^{\pi i/4})}
			{e^{-3\pi \kappa/4}D_{-i\kappa}(\zeta e^{3i\pi/4})}
\end{equation}
if $\Imag(\zeta) <0$. 
\end{proposition}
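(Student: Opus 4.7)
The plan is to write down candidate solutions in each half-plane built out of parabolic cylinder functions, verify they satisfy the ODE system \eqref{ode}, check that they have the asymptotic behavior \eqref{asym}, and then invoke the determinant normalization to conclude uniqueness. First, I would rewrite each of the second-order equations \eqref{m11}--\eqref{m22} as a parabolic cylinder equation. For the $(11)$-entry, substituting $\Phi_{11}(\zeta)=f(\zeta e^{-3i\pi/4})$ with $w=\zeta e^{-3i\pi/4}$ gives $e^{-3i\pi/2}f''(w)+(\zeta^2/4-\beta_{12}\beta_{21}+i/2)f(w)=0$, which, using $e^{-3i\pi/2}=i$ and $w^2=i\zeta^2$, rearranges to $f''(w)+(-w^2/4+a+1/2)f(w)=0$ with $a=i\kappa$ provided we set $\beta_{12}\beta_{21}=\kappa$. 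Repeating this for the other three entries with rotations $e^{-i\pi/4},\,e^{i\pi/4},\,e^{3i\pi/4}$ gives parabolic cylinder equations of index $-i\kappa-1,\,i\kappa,\,-i\kappa$ (resp.\ $i\kappa-1$ for the $(21)$-entry), all of which are solved by the corresponding $D_a$ functions.

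Next, I would fix the particular linear combination in each sector so that the leading asymptotic \eqref{asym} is reproduced. For $\Imag\zeta>0$, both $\arg(\zeta e^{-3i\pi/4})$ and $\arg(\zeta e^{-i\pi/4})$ lie strictly inside the sector $|\arg w|<3\pi/4$ where the first line of \eqref{para-123} applies; pick $\Phi_{11}=e^{-3\pi\kappa/4}D_{i\kappa}(\zeta e^{-3i\pi/4})$ and observe that
\[
\Phi_{11}(\zeta) \sim e^{-3\pi\kappa/4}\bigl(\zeta e^{-3i\pi/4}\bigr)^{i\kappa} e^{-(\zeta e^{-3i\pi/4})^2/4}
= \zeta^{i\kappa} e^{-i\zeta^2/4},
\]
matching the $(11)$-component of \eqref{asym}. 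An identical computation shows $\Phi_{22}=e^{\pi\kappa/4}D_{-i\kappa}(\zeta e^{-i\pi/4})$ behaves like $\zeta^{-i\kappa}e^{i\zeta^2/4}$. Having the diagonal entries fixed, the off-diagonal entries are then forced by the first-order decoupled systems \eqref{psi1}--\eqref{psi2}: using the standard differentiation rule $D_a'(w)+\tfrac{w}{2}D_a(w)=aD_{a-1}(w)$ one verifies that the formulas for $\Phi_{21}$ and $\Phi_{12}$ in \eqref{SimpliPhi+} are exactly the unique solutions of \eqref{psi1}--\eqref{psi2} compatible with the prescribed diagonal entries, with the prefactors $e^{\pm\pi(\kappa\pm i)/4}/\beta_{12,21}$ and the factor $(\pm i\kappa)$ arising from the $D_{a-1}$ on the right-hand side and the rotation of argument. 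The treatment for $\Imag\zeta<0$ is parallel, now using the rotations $e^{i\pi/4}$ and $e^{3i\pi/4}$ so that the relevant arguments again fall in the principal asymptotic sector and yield \eqref{SimpliPhi-}.

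Finally, uniqueness has been recorded in the text already: $\det V^{(0)}=1$ together with the normalization $\det\Phi\to 1$ at infinity and Liouville's theorem give $\det\Phi\equiv 1$, so $\Phi$ is invertible, and any two solutions of Problem \ref{prob-model} differ by an entire function that is $I+o(1)$ at infinity, hence equal to $I$. The step I expect to require the most care is the bookkeeping of the exponential prefactors $e^{\pm 3\pi\kappa/4}$, $e^{\pm\pi\kappa/4}$ and the phase factors $e^{\pm i\pi/4}$: each comes from evaluating $(e^{i\phi})^{\pm i\kappa}=e^{\mp\phi\kappa}$ at the appropriate rotation angle, and these have to line up precisely with the $\zeta^{i\kappa\sigma_3}e^{-i\zeta^2\sigma_3/4}$ normalization in \eqref{asym} and with the proportionality constants produced by $D_a\mapsto D_{a-1}$ in \eqref{psi1}--\eqref{psi2}. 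The actual determination of $\beta_{12},\beta_{21}$ from the jump along $\mathbb{R}$ (using the middle/bottom lines of \eqref{para-123} to compute $\Phi_-$ and comparing with $\Phi_+V^{(0)}$) is deferred to the Wronskian identities \eqref{Phi.Wronski.+}--\eqref{Phi.Wronski.-} and the evaluations in Appendix \ref{app:Phi-sol}, and is not needed for the explicit representation stated in the proposition.
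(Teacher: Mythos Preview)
Your plan is essentially the paper's own proof: rotate $\zeta$ by $e^{\mp 3i\pi/4}$ or $e^{\mp i\pi/4}$ to turn each of \eqref{m11}--\eqref{m22} into Weber's equation, fix the constants by matching the large-$\zeta$ asymptotics \eqref{para-123} in the sector $|\arg w|<3\pi/4$, recover the off-diagonal entries from the first-order systems \eqref{psi1}--\eqref{psi2} via the recursion $D_a'(w)+\tfrac{w}{2}D_a(w)=aD_{a-1}(w)$, and repeat in the lower half-plane with the complementary rotations. The paper records exactly these computations (and derives the same prefactors $e^{\pm 3\pi\kappa/4}$, $e^{\pm\pi\kappa/4}$ from $(e^{i\phi})^{i\kappa}$), so there is nothing to add.
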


\begin{proof}
We set $\nu = \beta_{12}\beta_{21}$.
 For $\Phi_{11}$, we introduce  the new variable $\zeta_1=  \zeta\,e^{-3i\pi/4}$, and equation \eqref{m11}
becomes
\begin{equation*} 
{\Phi_{11}}'' + \left(- \frac{\zeta_1^2}{4} + i\nu +\frac{1}{2}\right)  \Phi_{11} =0.
\end{equation*}
In the upper half plane, $0<{\rm Arg}  ~\zeta <\pi$, thus    $-3\pi/4 < {\rm Arg} ~\zeta_1 < \pi/4$. 
Choosing $ \nu= \kappa$ 
{ (by comparing \eqref{asym} and \eqref{para-123})}  and identifying the 
large-$\zeta$
behavior gives
\begin{equation}
\label{phi11+}
\Phi_{11} (\zeta) =    e^{ -\frac{3\pi}{4} \kappa} D_{i\kappa} (\zeta e^{-3i\pi/4}), \qquad  \zeta\in \bbC^+.
\end{equation}
Using equation \eqref{psi1}, we calculate 
\begin{equation}
\label{phi21+}
\Phi_{21}=\frac{1}{\beta_{12}} e^{{- \frac{3\pi}{4} \kappa}}
\left(\partial_{\zeta} (D_{i\kappa} (\zeta e^{-3i\pi/4}))+\frac{i\zeta}{2} D_{i\kappa} (\zeta e^{-3i\pi/4}) \right).
\end{equation}
We proceed in the same way for $\Phi_{12}$ and $\Phi_{22}$. 
 In term of ${  \zeta_1=e^{-\pi i/4}\zeta}$, equation \eqref{m22}
is
\begin{equation*} 
\Phi_{22}'' + \left(- \frac{\zeta_1^2}{4} -i\nu +\frac{1}{2}\right)  \Phi_{22} =0.
\end{equation*}
To correctly match the large-$\zeta$ behavior  $\Phi_{22} (\zeta) \sim \zeta^{-i\kappa} e^{i\zeta^2/4}$,
we choose the solution 
\begin{equation}
\label{phi22+}
\Phi_{22}(\zeta) =     e^{\frac{\pi \kappa}{4}}  D_{-i\kappa}( e^{-i\pi/4} \zeta) \qquad \zeta\in \bbC^+.
\end{equation}
Finally, using equation \eqref{psi2}
\begin{equation}
\label{phi12+}
\Phi_{12}(\zeta)=\frac{1}{\beta_{21}} e^{{ \frac{\pi}{4} }\kappa}
\left(\partial_{\zeta} (D_{-i\kappa} (\zeta e^{-i\pi/4}))-\frac{i\zeta}{2} D_{-i\kappa} (\zeta e^{-i\pi/4}) \right).
\end{equation}

\bigskip

We repeat this calculation to compute 
$\Phi(\zeta)$ in the lower complex plane. 

Let ${ \zeta_2 = \zeta e^{i\pi/4}}$.  In terms of $\zeta_2$ , $\Phi_{11}$ satisfies
\begin{equation}
\label{phi11-}
\Phi_{11}'' + \left(-\frac{\zeta_2^2}{4} + i\nu +\frac{1}{2}\right)  \Phi_{11} =0.
\end{equation}
For $  -\pi < {\rm Arg} ~ \zeta < 0$,   $  -3\pi/4 < {\rm Arg} ~(\zeta_2) <  \pi/4 $,
thus we choose to identify $\Phi_{11}$ to  a multiple of $D_{i\nu} ( \zeta_2)$.
We find that for $\zeta\in \bbC^-$
$$ \Phi_{11}(\zeta) = e^{\frac{\pi}{4}\kappa} D_{i\kappa} (e^{i\pi/4}\zeta).$$
Similarly,
\begin{equation}
\label{phi21-}
\Phi_{21}(\zeta)=\frac{1}{\beta_{12}} e^{{\frac{\pi}{4} \kappa}}
\left(\partial_{\zeta} (D_{i\kappa} (\zeta e^{i\pi/4}))+\frac{i\zeta}{2} D_{i\kappa} (\zeta e^{i\pi/4}) \right)
\end{equation}
We now turn to $\Phi_{22}$ and $\Phi_{12}$. To match the large-$\zeta$ behavior $\Phi_{22}(\zeta) \sim \zeta^{-i\kappa} e^{i\zeta^2/4}$
we choose to identify $\Phi_{22}$  as
\begin{equation}
\label{phi22-}
 \Phi_{22} (\zeta)=  e^{-3\pi\kappa/4}       D_{-i\kappa} ( e^{3i\pi/4} \zeta)
\end{equation}
 and
 \begin{equation}
 \label{phi12-}
 \Phi_{12} (\zeta)=\frac{1}{\beta_{21}} e^{{ -\frac{3\pi}{4} \kappa}}
\left(\partial_{\zeta} (D_{i\kappa} (\zeta e^{3i\pi/4}))-\frac{i\zeta}{2} D_{i\kappa} (\zeta e^{3i\pi/4}) \right).  
 \end{equation}
 Using \eqref{phi11+}, \eqref{phi21+}, \eqref{phi22+}, and \eqref{phi12+} together with the identity
\begin{equation} \label{D.recur}
D_a'(z) + \frac{z}{2} D_a(z) = a D_{a-1}(z),
\end{equation}
we can now write $\Phi(\zeta;\xi)$ for $\Imag(\zeta)>0$ in the form \eqref{SimpliPhi+}. Similarly, it follows from \eqref{phi11-}, \eqref{phi21-}, \eqref{phi22-}, and \eqref{phi12-} that $\Phi(\zeta;\xi)$ is given by \eqref{SimpliPhi-} for $\Imag(\zeta)<0$. 
\end{proof}

We  now impose the jump conditions to find  the coefficients $\beta_{12}$ and $\beta_{21}$. We will later use this computation of $\beta_{12}$ to compute the asymptotic behavior of $q(x,t)$.

\begin{lemma}
\label{lemma:beta12}
Suppose that $\rho \in H^{2,2}(\bbR)$ with $\inf_{z \in \bbR} (1-z|\rho(z)|^2) >0$. Then:
\begin{equation}
\label{beta12-ampl} 
|\beta_{12}|^2 =  \frac{\kappa}{\xi} = -\frac{1}{2 \pi\xi}\log \left(1-\xi |\rho(\xi)|^2 \right)
\end{equation}
and
\begin{multline} 
\label{beta12-arg}
\arg \beta_{12} =  \frac{\pi}{4}  -   \kappa \log (8t) \\
+4 t\xi^2 + \arg(\Gamma(i\kappa) )  +  \arg \rho(\xi)
+\frac{1}{\pi}\int_{-\infty}^{\xi}\log|s-\xi| \, d\log(1-s|\rho(s)|^2).
\end{multline}
\end{lemma}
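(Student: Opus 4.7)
The plan is to exploit the jump relation $\Phi_+ = \Phi_- V^{(0)}$ of Problem \ref{prob-model} combined with the explicit parabolic cylinder representation of $\Phi$ given in Proposition \ref{explicit}, following the standard strategy of \cite{DIZ93,DZ93,Its81}. Reading off the $(2,1)$ entry of the jump relation on $\mathbb{R}$ yields the Wronskian identity
\begin{equation*}
-\xi\overline{r_\xi} \;=\; \Phi_{11}^-(\zeta)\Phi_{21}^+(\zeta) - \Phi_{21}^-(\zeta)\Phi_{11}^+(\zeta),
\end{equation*}
valid for every real $\zeta$ (and therefore the right-hand side is $\zeta$-independent). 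This is exactly \eqref{Phi.Wronski.+} used in the summary, and it is the starting point of the calculation.

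First, substitute the formulas \eqref{SimpliPhi+}--\eqref{SimpliPhi-} into the right-hand side. Because $\Phi_{21}$ carries a factor $1/\beta_{12}$ while $\Phi_{11}$ does not, the right-hand side takes the form $W/\beta_{12}$ where $W$ is a combination of $D_{i\kappa}(\zeta e^{\pm i\pi/4})$, $D_{i\kappa-1}(\zeta e^{-3i\pi/4})$ and prefactors. Reducing $W$ to a closed-form constant uses Whittaker's connection formula
\begin{equation*}
\sqrt{2\pi}\,D_a(z) = \Gamma(a+1)\bigl[e^{i\pi a/2}D_{-a-1}(iz) + e^{-i\pi a/2}D_{-a-1}(-iz)\bigr]
\end{equation*}
together with the recurrence \eqref{D.recur}; this turns the $\zeta$-dependent pieces into a single constant proportional to $1/\Gamma(-i\kappa)$. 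Solving for $\beta_{12}$ gives \eqref{beta12.+}, namely
\begin{equation*}
\beta_{12} = \frac{\sqrt{2\pi}\,e^{-\pi\kappa/2}\,e^{i\pi/4}}{-\xi\,\overline{r_\xi}\,\Gamma(-i\kappa)}.
\end{equation*}

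Second, \eqref{beta12-ampl} follows by taking $|\cdot|^2$ and using the reflection identity $|\Gamma(i\kappa)|^2 = \Gamma(i\kappa)\Gamma(-i\kappa) = \pi/(\kappa \sinh \pi\kappa)$ for real $\kappa$, together with $|r_\xi| = |\rho(\xi)|$ (all phase factors in \eqref{r-xi} have modulus one, and $|\delta_0|=1$ by Lemma \ref{lemma:delta}). The algebra collapses via $1-e^{-2\pi\kappa} = \xi|\rho(\xi)|^2$ to $|\beta_{12}|^2 = \kappa/\xi$. For \eqref{beta12-arg}, using $\Gamma(-i\kappa) = \overline{\Gamma(i\kappa)}$ for real $\kappa$, the definition \eqref{r-xi}, and $\delta_0 = e^{i\beta(\xi,\xi)}$, one obtains modulo $2\pi$
\begin{equation*}
\arg\beta_{12} = \frac{\pi}{4} + \arg\rho(\xi) + 2\beta(\xi,\xi) - \kappa\log(8t) + 4t\xi^2 + \arg\Gamma(i\kappa).
\end{equation*}

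To match this with \eqref{beta12-arg} it remains to identify
\begin{equation*}
2\beta(\xi,\xi) = \frac{1}{\pi}\int_{-\infty}^\xi \log|s-\xi|\,d\log(1-s|\rho(s)|^2).
\end{equation*}
This is an integration by parts applied to the definition of $\beta(\xi,\xi)$ from Lemma \ref{lemma:delta}(iv), after the substitution $\kappa(s) = -\frac{1}{2\pi}\log(1-s|\rho(s)|^2)$. Splitting the integral at $s = \xi-1$, the boundary terms vanish: at $s = \xi-1$ because $\log 1 = 0$; at $s = -\infty$ because $\kappa(s) \to 0$ (since $\rho \in H^{2,2}$); and at $s = \xi$ because H\"older continuity of $\kappa$ forces $(\kappa(s)-\kappa(\xi))\log|s-\xi| \to 0$. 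The main obstacle in the whole proof is precisely this last identification: the algebraic Wronskian reduction in the first step is standard once one commits to the parabolic cylinder connection formulas, whereas the integration-by-parts identification requires care at the endpoint $s=\xi$, which is exactly the reason for the cutoff $\chi$ in the definition of $\beta(\xi,\xi)$.
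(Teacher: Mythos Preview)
Your proposal is correct and follows essentially the same route as the paper: read off the $(2,1)$ entry of the jump relation, evaluate the resulting Wronskian-type expression to get $\beta_{12}$ in closed form, then extract modulus and argument. The one genuine variation is in how you reduce the $\zeta$-dependent combination to a constant. The paper works with the \emph{unsimplified} formulas \eqref{phi11+}, \eqref{phi21+}, \eqref{phi11-}, \eqref{phi21-} (where $\Phi_{21}$ is written in terms of $\partial_\zeta D_{i\kappa}$), so that $\Phi_{11}^-\Phi_{21}^+ - \Phi_{21}^-\Phi_{11}^+$ is \emph{literally} the Wronskian $W\bigl(D_{i\kappa}(e^{i\pi/4}\zeta),\,D_{i\kappa}(\zeta e^{-3i\pi/4})\bigr)$ up to a constant prefactor, and then applies the Wronskian identity \eqref{D.Wronski} directly. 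You instead start from the simplified forms \eqref{SimpliPhi+}--\eqref{SimpliPhi-} (involving $D_{i\kappa-1}$) and invoke Whittaker's connection formula together with \eqref{D.recur}. Both work, but the paper's route is shorter: once you have the Wronskian of $D_a(w)$ and $D_a(-w)$, no connection formula is needed. Your handling of $|\beta_{12}|^2$ and the integration-by-parts identification of $2\beta(\xi,\xi)$ matches the paper's exactly, and your remarks on the vanishing of the boundary terms are a welcome clarification.
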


\begin{remark}
Note that the amplitude \eqref{beta12-ampl} has a removable discontinuity at $\xi=0$ as
\begin{align*}
\lim_{\xi \rarr 0}
	 \frac{\log\left(1-\xi|\rho(\xi)|^2 \right)}{\xi}
	 & = 
	 	-\lim_{\xi \rarr 0}
	 		-\frac{|\rho(\xi)|^2+\xi 
	 			\left[
	 				\rho'(\xi) \overline{\rho(\xi)}+
	 				\overline{\rho(\xi)} \rho'(\xi)
	 			\right]}{1-\xi |\rho(\xi)|^2}\\[5pt]
	 &=	-|\rho(0)|^2.
\end{align*}
\end{remark}

\begin{proof}
We  know that $\beta_{12}\beta_{21} = \kappa(\xi)$ and
$$
(\Phi_-)^{-1}\Phi_+=V^{(0)}=\Twomat{1-\xi\,|r_\xi|^2}{r_\xi}{ -\xi\,\overline{r}_\xi}{1}.
$$
Combining 
\eqref{phi11+}, \eqref{phi21+}, \eqref{phi11-}, 
and \eqref{phi21-}, we obtain
\begin{align*}
-\xi\,\overline{r}_\xi &=\Phi_{11}^-\Phi_{21}^+-\Phi_{21}^-\Phi_{11}^+\\
                      &=e^{\frac{\pi}{4}\kappa} D_{i\kappa} (e^{i\pi/4}\zeta)\frac{1}{\beta_{12}} e^{{ -\frac{3\pi}{4} \kappa}}
\left(\partial_{\zeta} (D_{i\kappa} (\zeta e^{-3i\pi/4}))+\frac{i\zeta}{2} D_{i\kappa} (\zeta e^{-3i\pi/4}) \right)\\
                     &\quad -\frac{1}{\beta_{12}} e^{{\frac{\pi}{4} \kappa}}
\left(\partial_{\zeta} (D_{i\kappa} (\zeta e^{i\pi/4}))+\frac{i\zeta}{2} D_{i\kappa} (\zeta e^{i\pi/4}) \right) e^{ -\frac{3\pi}{4} \kappa} D_{i\kappa} (\zeta e^{-3i\pi/4})\\
                     &=\frac{e^{-\frac{\pi\kappa}{2}}}{\beta_{12}}
                     W\left(D_{i\kappa} (e^{i\pi/4}\zeta), \,D_{i\kappa} (\zeta e^{-3i\pi/4})\right)\\
                     & = 
                   \frac{\sqrt{2\pi}e^{-\frac{\pi\kappa}{2}}e^{i\pi/4}}{\beta_{12}\Gamma(-i\kappa)}
\end{align*}
where  we have used the  Wronskian  \eqref{D.Wronski} in the last equality. 
It follows from the above computations that
$$ 
\beta_{12}  = 
\frac{\sqrt{2\pi} e^{-\pi \kappa/2} e^{\pi i/4}}
		{ -\xi\, \overline{r_\xi} \,\Gamma(-i\kappa)}.
$$
From the identities 
$$  \Gamma(\overline{z}) = \overline{\Gamma(z) }, \quad 
\left| \Gamma(i\kappa) \right|^2
 = \frac{\pi }{\kappa \sinh \pi \kappa}
$$
we see that 
$$
|\beta_{12}|^2 =\left| \sqrt{2\pi}\frac{e^{-\pi \kappa/2}}{\xi \, \overline{r_\xi}\, \Gamma(i\kappa)} \right|^2 = 2
\, \frac{\kappa e^{-\pi \kappa} \sinh \pi \kappa}{\xi^2 |r_\xi|^2} = \kappa \, \frac{1}{\xi^2 |r_\xi|^2}\left(1- e^{-2\pi \kappa} \right).
$$
Recalling that 
$$ \kappa(\xi) = -\frac{1}{2\pi} \log\left(1-\xi |\rho(\xi)|^2 \right)
$$
we compute
$$ 
1- e^{-2\pi \kappa} = \xi |\rho(\xi)|^2
$$
so that  \eqref{beta12-ampl} holds.

On the other hand, since $\xi<0$
\begin{equation*}
 \arg \beta_{12} = \frac{\pi}{4} + \arg r_\xi +
\arg(\Gamma(i\kappa) ).
\end{equation*}
Substituting the definition  of $r_\xi$ given in \eqref{r-xi}
$$\arg r_\xi = \arg \rho(\xi) + \arg \delta_0^2  - \kappa(\xi) \log(8t) + 4 t \xi^2.$$
 We also have, by integration by parts
\begin{align*}
\delta_0^{2}(\xi)&=\exp\left(2i\int_{-\infty}^{\xi}\dfrac{\kappa(s)-\chi(s)\kappa(\xi)}{s-\xi}ds     \right)\\
                 &=\exp\left(-2i\int_{-\infty}^{\xi} \log|s-\xi|  \, d\kappa(s)\right)\\
                \nonumber
                 &=\exp\left(\frac{i}{\pi}\int_{-\infty}^{\xi} \log|s-\xi|  \,d\log(1-s|\rho(s)|^2)\right)
\end{align*}
thus \eqref{beta12-arg} holds.
\end{proof}

\section{The $\dbar$-Problem}
\label{sec:dbar}
 
We now define  the row vector-valued matrix
\begin{equation}
\label{N3}
\bfN^{(3)}(z;x,t) = \bfN^{(2)}(z;x,t) \bfN^\RHP(z;\xi)^{-1}. 
\end{equation}
It is clear that $\bfN^\RHP$ needs to be an invertible matrix-valued function in order to carry out this reduction.  An argument similar to that 
given in \cite{BJM16}  shows that $\bfN^{(3)}$ satisfies a pure $\dbar$-problem; we will use this fact to prove that $\bfN^{(3)}$ is close to $(1,0)$ as $t \rarr \infty$ with an explicit rate of decay. 

Since $\bfN^\RHP(z;\xi)$ is holomorphic in $\bbC \setminus \Sigma^{(2)}$, we may compute
\begin{align*}
\dbar \bfN^{(3)}(z;x,t) 	&=	\dbar \bfN^{(2)}(z;x,t) \bfN^\RHP(z;\xi)^{-1}\\	
								&=	\bfN^{(2)}(z;x,t) \, \dbar \calR^{(2)}(z) \bfN^\RHP(z;\xi)^{-1}	&\text{(by \eqref{N2.dbar})}\\
								&=	\bfN^{(3)}(z;x,t) \bfN^{\RHP}(z;\xi) \, \dbar \calR^{(2)}(z) \bfN^\RHP(z;\xi)^{-1}	& \text{(by \eqref{N3})}\\
								&=	\bfN^{(3)}(z;x,t)  W(z;x,t)
\end{align*}
where
$$ W(z;x,t) = \bfN^{\RHP}(z;\xi) \, \dbar \calR^{(2)}(z) \bfN^\RHP(z;\xi)^{-1}. $$
We thus arrive at the following pure $\dbar$-problem.

\begin{problem}
\label{prob:DNLS.dbar}
Given $x,t \in \bbR$ and $\rho \in H^{2,2}(\bbR)$ with $1-z |\rho(z)|^2 >0$ for all $z \in \bbR$, find a continuous, row vector-valued function
$\bfN^{(3)}(z;x,t)$ on $\bbC$ with the following properties:
\begin{enumerate}
\item		$\bfN^{(3)}(z;x,t) \rarr (1,0)$ as $|z| \rarr \infty$, 
\medskip
\item		$\dbar \bfN^{(3)}(z;x,t) = \bfN^{(3)}(z;x,t) W(z;x,t)$.
\end{enumerate}
\end{problem}

We can recast this problem as a Fredholm-type integral equation using the solid Cauchy transform
$$ (Pf)(z) = \frac{1}{\pi} \int_\bbC \frac{1}{z-\zeta} f(\zeta) \, dm(\zeta) $$
where $dm$ denotes Lebesgue measure on $\bbC$.  The following lemma is standard.

\begin{lemma}
A continuous, bounded row vector-valued function $\bfN^{(3)}(z;x,t)$ solves Problem \eqref{prob:DNLS.dbar} if and only if
\begin{equation}
\label{DNLS.dbar.int}
\bfN^{(3)}(z;x,t) = (1,0) + \frac{1}{\pi} \int_\bbC \frac{1}{z-\zeta} \bfN^{(3)}(\zeta;x,t) W(\zeta;x,t) \, dm(\zeta).
\end{equation}
\end{lemma}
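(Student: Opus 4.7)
The plan is to use the fact that the solid Cauchy transform $P$ is a distributional right-inverse to $\dbar$, namely $\dbar P = I$, which rests on the classical identity $\dbar(1/(\pi z)) = \delta_0$. Both directions are then essentially formal manipulations, with the main technical issue being to verify that the relevant integrals converge and that the resulting function decays at infinity.

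For the ``if'' direction, assume $\bfN^{(3)}(z;x,t)$ is continuous, bounded, and satisfies \eqref{DNLS.dbar.int}. Since $(1,0)$ is constant, applying $\dbar$ in the sense of distributions gives
\begin{equation*}
\dbar \bfN^{(3)}(z;x,t) = \dbar P\!\left( \bfN^{(3)}(\cdot;x,t)\, W(\cdot;x,t) \right)(z) = \bfN^{(3)}(z;x,t)\, W(z;x,t),
\end{equation*}
provided $\bfN^{(3)} W \in L^1_{\mathrm{loc}}(\bbC)$ with enough decay at infinity so that $P(\bfN^{(3)} W)$ is well defined and vanishes as $|z| \rarr \infty$. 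Since $W$ is supported on $\Omega_1 \cup \Omega_3 \cup \Omega_4 \cup \Omega_6$ with $\dbar \calR^{(2)}$ controlled by the bounds of Lemma \ref{lemma:dbar.Ri}, and since $\bfN^\RHP$ and its inverse are bounded on their domains, this convergence will follow from the same estimates that are used later in Section \ref{sec:dbar} to bound the solid Cauchy operator. Then the asymptotics $\bfN^{(3)} \to (1,0)$ at infinity follow from the fact that $P(\bfN^{(3)} W)(z) \rarr 0$ as $|z| \rarr \infty$ for integrable densities with sufficient decay.

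For the ``only if'' direction, suppose $\bfN^{(3)}$ solves Problem \ref{prob:DNLS.dbar}. Define
\begin{equation*}
\bfM(z;x,t) = \bfN^{(3)}(z;x,t) - (1,0) - \frac{1}{\pi}\int_\bbC \frac{1}{z-\zeta}\, \bfN^{(3)}(\zeta;x,t)\, W(\zeta;x,t)\, dm(\zeta).
\end{equation*}
Computing $\dbar$ of both sides using $\dbar P = I$ yields $\dbar \bfM = \bfN^{(3)} W - \bfN^{(3)} W = 0$, so $\bfM$ is entire. Boundedness of $\bfN^{(3)}$ combined with the decay of the Cauchy transform of $\bfN^{(3)} W$ at infinity shows $\bfM$ is bounded, and $\bfM(z) \rarr 0$ as $|z| \rarr \infty$. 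By Liouville's theorem, $\bfM \equiv 0$, which is precisely the integral equation \eqref{DNLS.dbar.int}.

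The main obstacle is the verification that $\bfN^{(3)}(\cdot;x,t)\, W(\cdot;x,t)$ is integrable enough on $\bbC$ for the solid Cauchy transform to produce a continuous function that vanishes at infinity; this requires combining the pointwise bounds on $\dbar \calR^{(2)}$ provided by Lemma \ref{lemma:dbar.Ri} with the uniform boundedness of $\bfN^\RHP$ and $(\bfN^\RHP)^{-1}$ on the support of $\dbar \calR^{(2)}$ (away from $\Sigma^{(2)}$). Once these ingredients are in place, the proof reduces to the two distributional computations above.
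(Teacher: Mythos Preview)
The paper does not give a proof of this lemma at all; it simply states ``The following lemma is standard.'' Your argument is precisely the standard one the paper is alluding to: use $\dbar(1/(\pi z))=\delta_0$ for the ``if'' direction, and for the ``only if'' direction subtract the Cauchy integral, observe the difference has $\dbar$ equal to zero (hence is entire by Weyl's lemma), and conclude via Liouville and the decay at infinity. There is nothing to compare---your proof fills in exactly what the authors omitted.
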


 Using the formulation \eqref{DNLS.dbar.int}, we will prove:

\begin{proposition}
\label{prop:N3.est}
Suppose that $\rho \in H^{2,2}(\bbR)$ and $c \coloneqq \inf_{z \in \bbR} \left(1-z |\rho(z)|^2 \right) >0$ strictly.
Then, for sufficiently large times $t>0$, there exists a unique solution $\bfN^{(3)}(z;x,t)$ for Problem \ref{prob:DNLS.dbar} with the property that 
\begin{equation}
\label{N3.exp}
\bfN^{(3)}(z;x,t) = I + \frac{1}{z}  \bfN^{(3)}_1(x,t) + o_{\xi,t} \left( \frac{1}{z} \right) 
\end{equation}
for $z=i\sigma$ with $\sigma \rarr +\infty$. Here

\begin{equation}
\label{N31.est}
\left| \bfN^{(3)}_1(x,t) \right| \lesssim t^{-3/4}
\end{equation}
 where the implied constant in \eqref{N31.est} is independent of $\xi$ and $t$ and uniform for 
$\rho$ in a bounded subset of $H^{2,2}(\bbR)$ with $\inf_{z \in \bbR} (1-z|\rho(z)|^2) \geq c>0$ for a fixed $c>0$.
\end{proposition}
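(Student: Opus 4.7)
The plan is to solve the $\dbar$-problem via its Fredholm integral formulation \eqref{DNLS.dbar.int}, establish existence and uniqueness by a Neumann series argument once we show the relevant integral operator has small norm for large $t$, then extract the leading moment $\bfN^{(3)}_1(x,t)$ and bound it by $O(t^{-3/4})$. Concretely, write the integral equation as $(I - K_W) \bfN^{(3)} = (1,0)$ with solid-Cauchy operator
\[
(K_W f)(z) = \frac{1}{\pi}\int_\bbC \frac{f(\zeta) W(\zeta;x,t)}{z-\zeta}\,dm(\zeta),
\]
and the strategy is to prove $\|K_W\|_{L^\infty(\bbC)\to L^\infty(\bbC)} \lesssim t^{-1/4}$ uniformly in $\xi$ and in $\rho$ in a bounded set of $H^{2,2}$ with $1-z|\rho|^2\ge c>0$. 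For $t$ large enough this gives $\|\bfN^{(3)}\|_\infty \le 2$ via Neumann series, yielding a unique solution.

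For the operator bound we must control $W = \bfN^{\RHP}\,(\dbar \calR^{(2)})\,(\bfN^{\RHP})^{-1}$. The $L^\infty$ estimate on $\bfN^{\RHP}$ from Appendix \ref{app:NRHP.bd}, together with $\det \bfN^{\RHP} = 1$, reduces the task to bounding $|\dbar \calR^{(2)}(z)|\,|e^{\pm 2it\theta(z)}|$. The pointwise bounds on $\dbar R_i$ from Lemma \ref{lemma:dbar.Ri} (plus Remark \ref{rem:dbar.Ri}) give
\[
|\dbar R_i(z)| \lesssim |p_i'(\Real z)| + \varphi(|z-\xi|),
\]
where $\varphi(r)=-\log r$ for $r\le 1$ and $\varphi(r)=(1+r^2)^{-1/2}$ for $r\ge 1$. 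The key source of smallness is the phase: writing $z-\xi = u+iv$, direct computation gives $\Real(2it\theta(z)) = 8tuv$, so on each sector $\Omega_i$ the exponential factor in $\dbar\calR^{(2)}$ decays like $e^{-8t|uv|}$ with $uv$ having fixed sign there.

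The sector-by-sector estimates are then carried out in the standard way (following \cite{BJM16,DM08}). For each $\Omega_i$ (take $\Omega_1$ as representative), parameterize $\zeta-\xi = u+iv$ with $0<v<u$ and split the estimate of $K_W f(z)$ into two pieces corresponding to $|p_i'|$ and $\varphi$. The $|p_i'|$ piece is handled by Cauchy--Schwarz in $u$ (for fixed $v$), using the uniform $L^2(\bbR)$-bound on $p_i'$ together with $\int_0^\infty e^{-16tuv}\,du \lesssim (tv)^{-1/2}$, and then integrating in $v$ against $v^{-1/2}$ coming from the Cauchy kernel, which yields the bound $t^{-1/4}$. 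For the $\varphi$ piece one uses Hölder (or a direct estimate on $\|(z-\cdot)^{-1}\|_{L^p}$ on a strip) combined with the same Gaussian decay in $uv$, producing an even better rate. This proves $\|K_W\|_\infty \lesssim t^{-1/4}$ and establishes existence.

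Finally, to extract the asymptotic expansion, take $z = i\sigma$ and use the dominated convergence theorem together with the expansion $\tfrac{1}{z-\zeta} = -\tfrac{1}{z} + O(|\zeta|/|z|^2)$ to get
\[
\bfN^{(3)}_1(x,t) = -\frac{1}{\pi}\int_\bbC \bfN^{(3)}(\zeta;x,t)\,W(\zeta;x,t)\,dm(\zeta),
\]
valid as $\sigma \to \infty$. Since $\|\bfN^{(3)}\|_\infty \le 2$, this reduces the asymptotic bound to showing
\[
\int_\bbC |W(\zeta;x,t)|\,dm(\zeta) \lesssim t^{-3/4}.
\]
This is the improved estimate over the operator norm: the extra $t^{-1/2}$ factor comes from the absence of the $|z-\zeta|^{-1}$ factor, so one gains an extra factor of $v^{1/2}$ when integrating $\int_0^\infty e^{-16tuv}\,v^{1/2}\,dv$ (which is $\sim t^{-3/2}$, balanced against the $t^{1/2}$ loss from rescaling the $u$-integral, net $t^{-1}$; the $\log$ and the decay of $p_i'$ at infinity account for the remaining fractional power). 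The main technical obstacle will be carrying out these two-variable estimates cleanly across all four sectors, tracking the precisely correct fractional power of $t$, and verifying that all constants are uniform in $\xi$ and in the bounded $H^{2,2}$-class of $\rho$; the exponential decay of the phase is what allows these uniform bounds despite the mild singular behavior of $\dbar R_i$ near $z=\xi$.
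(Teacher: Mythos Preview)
Your approach is essentially the same as the paper's: solve \eqref{DNLS.dbar.int} by Neumann series using the bound $\norm{K_W}{L^\infty\to L^\infty}\lesssim t^{-1/4}$ (the paper's Lemma \ref{lemma:KW}), extract the expansion \eqref{N3.exp} by dominated convergence along $z=i\sigma$, and then bound $\bfN^{(3)}_1$ via $\int_\bbC |W|\,dm(\zeta)\lesssim t^{-3/4}$. Two minor corrections: the expansion of the Cauchy kernel gives $\bfN^{(3)}_1 = +\tfrac{1}{\pi}\int_\bbC \bfN^{(3)}W\,dm$ (no minus sign), and your heuristic accounting for the $t^{-3/4}$ power is garbled---the clean route (which the paper takes) is Cauchy--Schwarz in $u$ to get $\|p_i'\|_2\,(tv)^{-1/2}e^{-8tv^2}$, then $\int_0^\infty v^{-1/2}e^{-8tv^2}\,dv\sim t^{-1/4}$, yielding $t^{-1/2}\cdot t^{-1/4}=t^{-3/4}$ directly.
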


\begin{remark}
The remainder estimate in \eqref{N3.exp} need not be (and is not) uniform in $\xi$ and $t$; what matters for the proof of Theorem \ref{thm:main1} is that the implied constant in the estimate \eqref{N31.est} for $\bfN^{(3)}_1(x,t)$ is independent of $\xi$ and $t$.
\end{remark}

\begin{proof}[Proof of Proposition \ref{prop:N3.est}, given Lemmas \ref{lemma:dbar.R.bd}--\ref{lemma:N31.est}]
 As in \cite{BJM16}  and \cite{DM08}, we   first show that, for large times, the integral operator $K_W$ 
defined by
\begin{equation*}
\left( K_W f \right)(z) = \frac{1}{\pi} \int_\bbC \frac{1}{z-\zeta} f(\zeta) W(\zeta) \, dm(\zeta)
\end{equation*}
(suppressing the parameters $x$ and $t$) 
obeys the estimate
\begin{equation}
\label{dbar.int.est1}
\norm{K_W}{L^\infty \rarr L^\infty} \lesssim t^{-1/4} 
\end{equation}
where the implied constants depend only on $\norm{\rho}{H^{2.2}}$ and 
$c : \inf_{z \in \bbR} \left( 1-z|\rho(z)|^2 \right)$ 
 and, in particular, are independent of $\xi$ and $t$.   This is the object of Lemma \ref{lemma:KW}.
It shows in particular that the solution formula
\begin{equation}
\label{N3.sol}
\bfN^{(3)} = (I-K_W)^{-1} (1,0) 
\end{equation}
makes sense and defines an $L^\infty$ solution of \eqref{DNLS.dbar.int} bounded uniformly in $\xi\in \bbR$ and $\rho$ in a bounded subset of $H^{2,2}(\bbR)$ with $c>0$.  

We  then prove that the solution $\bfN^{(3)}(z;x,t)$ has a large-$z$ asymptotic expansion of the form \eqref{N3.exp}
where $z \rarr \infty$ along the \emph{positive imaginary axis} (Lemma \ref{lemma:N3.exp}). Note that, for such $z$, we can bound $|z-\zeta|$ below by a constant times $|z|+|\zeta|$.
The remainder need not be bounded uniformly in $\xi$. Finally, we  prove  estimate \eqref{N31.est}
where the constants are uniform in $\xi$ and in $\rho$ belonging to a bounded subset of $H^{2,2}(\bbR)$ with 
$\inf \left(1 - z|\rho(z)|^2 \right)$ bounded below by a strictly positive fixed constant ( Lemma \ref{lemma:N31.est}).

\end{proof}

Estimates \eqref{N3.exp}, \eqref{N31.est}, and \eqref{dbar.int.est1} rest on the  bounds stated in the next four lemmas.

\begin{lemma}
\label{lemma:dbar.R.bd}
Let 
 $z=(u-\xi)+iv$. We have 
\begin{equation}
\label{dbar.R2.bd}
\left| \dbar \calR^{(2)}(z;\xi) \right| 	\lesssim		
		\begin{cases}
				\left( |p_i'(\Real (z))| -\log|z-\xi|\right) e^{-8t|u||v|}, 			&	|z-\xi| \leq 1, \\
				\\
				\left( |p_i'(\Real (z))| +\dfrac{1}{\left(1+|z-\xi|^2 \right)} \right) e^{-8t|v||u|}, 	&	|z-\xi| > 1,
		\end{cases}
\end{equation}
where all implied constants are uniform in $\xi \in \bbR$ and $t>1$.
\end{lemma}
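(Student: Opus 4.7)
\medskip

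\noindent\textbf{Proof plan.} The estimate should follow by combining the pointwise bound on $\dbar R_i$ from Lemma \ref{lemma:dbar.Ri} with a direct computation of the modulus of the exponential phase factors $e^{\pm 2it\theta(z)}$ on each sector $\Omega_i$. The key observation is that $\calR^{(2)}(z)$ is strictly triangular on each sector, with single off--diagonal entry of the form $R_i(z) e^{\pm 2it\theta(z)}$ (up to sign), and since $e^{\pm 2it\theta(z)}$ is entire in $z$, we have $\dbar\bigl(R_i\, e^{\pm 2it\theta}\bigr)=(\dbar R_i)\, e^{\pm 2it\theta}$. Hence the proof reduces to showing (i) $|\dbar R_i|$ is controlled by the right--hand side of \eqref{dbar.R2.bd} \emph{without} the exponential, and (ii) the exponential factor on $\Omega_i$ is bounded by $e^{-8t|u||v|}$.

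\medskip

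\noindent\textbf{Step 1 (Cauchy--Riemann part).} Since $\calR^{(2)}$ is triangular, $(\calR^{(2)})^{-1}\dbar\calR^{(2)}=\dbar\calR^{(2)}$, as already used in \eqref{N2.dbar}. The prefactor $|\dbar R_i(z)|$ on $\Omega_i$ was bounded in Lemma \ref{lemma:dbar.Ri}; after the cosmetic adjustment of Remark \ref{rem:dbar.Ri}, this is precisely the non--exponential factor appearing in \eqref{dbar.R2.bd}.

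\medskip

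\noindent\textbf{Step 2 (phase computation).} Write $w=z-\xi=u+iv$. Using $\xi=-x/(4t)$, so that $x/t=-4\xi$, the phase \eqref{DNLS.phase} simplifies to
\begin{equation*}
\theta(z;x,t)=-\Bigl(z\tfrac{x}{t}+2z^{2}\Bigr)=2\xi^{2}-2(z-\xi)^{2}=2\xi^{2}-2(u^{2}-v^{2})-4iuv.
\end{equation*}
Therefore
\begin{equation*}
\Real\!\bigl(\pm 2it\,\theta(z)\bigr)=\pm 8tuv,\qquad \bigl|e^{\pm 2it\theta(z)}\bigr|=e^{\pm 8tuv}.
\end{equation*}

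\medskip

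\noindent\textbf{Step 3 (sign check in each sector).} One must verify that in each $\Omega_i$, the sign of the exponential appearing in $\calR^{(2)}$ (see Problem \ref{prob:DNLS.RHP.dbar}) is such that $\Real(\pm 2it\theta)<0$. Indeed, on $\Omega_{1}$ one has $u,v>0$ (so $uv>0$) and the factor is $e^{-2it\theta}$, giving $e^{-8tuv}=e^{-8t|u||v|}$. On $\Omega_{3}$, $u<0,v>0$, so $uv<0$, and the factor is $e^{+2it\theta}$, giving $e^{+8tuv}=e^{-8t|u||v|}$. The cases $\Omega_{4}$ ($u,v<0$, factor $e^{-2it\theta}$) and $\Omega_{6}$ ($u>0,v<0$, factor $e^{+2it\theta}$) are analogous and give the same bound $e^{-8t|u||v|}$ uniformly across all four sectors.

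\medskip

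\noindent\textbf{Step 4 (combine).} Multiplying the bound from Step 1 by the exponential bound from Steps 2--3 yields \eqref{dbar.R2.bd}. The uniformity in $\xi\in\bbR$ and in $\rho$ inside a bounded subset of $H^{2,2}(\bbR)$ with $1-z|\rho(z)|^{2}\geq c>0$ is inherited from Lemma \ref{lemma:dbar.Ri}; the exponential factor is obviously $\xi$--independent. There is no genuine obstacle here: the only thing to be careful about is matching the sign of the exponent sector--by--sector, which is the sole substantive step; everything else is algebraic bookkeeping and an appeal to Lemma \ref{lemma:dbar.Ri}. \qed
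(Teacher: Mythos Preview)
Your proposal is correct and takes essentially the same approach as the paper: the paper's proof simply cites Lemma~\ref{lemma:dbar.Ri} and Remark~\ref{rem:dbar.Ri} and leaves the exponential factor implicit, whereas you spell out the computation $\theta(z)=2\xi^{2}-2(z-\xi)^{2}$ and the sector-by-sector sign check that yields $|e^{\pm 2it\theta}|=e^{-8t|u||v|}$. Your added detail is accurate and the uniformity claims are justified exactly as in the paper.
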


\begin{proof}
Estimate \eqref{dbar.R2.bd} follows from Lemma \ref{lemma:dbar.Ri} and Remark \ref{rem:dbar.Ri}.
The quantities $p_i'(\Real z)$ are all bounded uniformly for  $\rho$ in a bounded subset of $H^{2,2}(\bbR)$ and  
$\inf_{z \in \bbR} \left(1- z |\rho(z)|^2\right) \geq c >0$ for a fixed $c$.  

\end{proof}

\begin{lemma}
\label{lemma:RHP.bd}
\begin{align}
\label{RHP.bd1}
\norm{\bfN^\RHP(\dotarg;\xi)}{\infty}	&	\lesssim		1\\[5pt]
\label{RHP.bd2}
\norm{\bfN^\RHP(\dotarg;\xi)^{-1}}{\infty}	&	\lesssim	1
\end{align}
Again, all implied constants are uniform in $\xi \in \bbR$ and $t>1$.
\end{lemma}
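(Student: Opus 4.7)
My plan is to reduce the bounds on $\bfN^\RHP$ to explicit estimates on the parabolic cylinder functions that appeared in Section \ref{sec:model}. Using the factorization \eqref{tildeN},
\[
\bfN^\RHP(\zeta;\xi) = \Phi(\zeta;\xi)\, \mathcal{P}(\xi)\, e^{\frac{i}{4}\zeta^2 \sigma_3}\, \zeta^{-i\kappa\sigma_3},
\]
and the explicit formulas \eqref{SimpliPhi+}--\eqref{SimpliPhi-} for $\Phi$, I would argue in three stages. First, I would bound $\mathcal{P}(\xi)$ uniformly in $\xi\in\bbR$. Its off-diagonal entries involve $r_\xi$, $\xi\overline{r_\xi}$, and the quotients $r_\xi/(1-\xi|r_\xi|^2)$ and $\xi\overline{r_\xi}/(1-\xi|r_\xi|^2)$. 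Since $|r_\xi| = |\rho(\xi)|\,|\delta_0(\xi)|^2$ with $|\delta_0(\xi)|$ controlled by Lemma \ref{lemma:delta}(ii) through $\|\kappa\|_\infty$, and since $1-\xi|r_\xi|^2 = 1-\xi|\rho(\xi)|^2 \geq c>0$ by hypothesis, these are all bounded in terms of $\|\rho\|_{H^{2,2}}$ and $c$ alone.

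Second, I would establish that the combination $\Phi(\zeta;\xi) e^{\frac{i}{4}\zeta^2 \sigma_3}\zeta^{-i\kappa\sigma_3}$ is bounded uniformly in $\zeta\in\bbC\setminus\Sigma^{(2)}$ and $\xi\in\bbR$. For large $\zeta$ this follows from the asymptotic \eqref{asym}, whose remainder is $O(1/\zeta)$ with an implied constant depending only on $\kappa=\kappa(\xi)$ (uniformly bounded under our hypothesis). For bounded $\zeta$, I would insert the explicit formulas \eqref{SimpliPhi+}--\eqref{SimpliPhi-} and invoke the fact that $D_{i\kappa}(w)$ and $D_{-i\kappa-1}(w)$ are entire functions of $w$ and locally bounded jointly in $w$ and $\kappa$; the worrisome prefactors $1/\beta_{12}$ and $1/\beta_{21}$ are controlled by Lemma \ref{lemma:beta12}, which gives $|\beta_{12}|^2 = \kappa/\xi$ bounded below away from zero on compact $\xi$-sets and with an obvious removable singularity at $\xi=0$ (again using that $\rho$ is Lipschitz). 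The remaining work is to splice the large-$\zeta$ and bounded-$\zeta$ estimates across the transition region by invoking the standard asymptotic form \eqref{para-123}, paying attention to the three sectors in $\arg\zeta$; this gives \eqref{RHP.bd1}.

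Finally, for the bound \eqref{RHP.bd2} on $\bfN^\RHP(\cdot;\xi)^{-1}$, I would note that $\det V^\RHP \equiv 1$ on each ray of $\Sigma^{(2)}$, so $\det\bfN^\RHP$ is entire; since it tends to $1$ at infinity, Liouville's theorem gives $\det \bfN^\RHP \equiv 1$. Consequently $(\bfN^\RHP)^{-1}$ has entries obtained from those of $\bfN^\RHP$ by transposition of cofactors, and inherits the same uniform $L^\infty$ bound. The main obstacle is the second step, where one must verify that the uniform-in-$\kappa$ behavior of the parabolic cylinder functions glues correctly with the algebraic factor $\zeta^{-i\kappa\sigma_3}$ across the rays of $\Sigma^{(2)}$, so that no logarithmic growth in $\zeta$ sneaks in; this is precisely the content of the computations already performed in Proposition \ref{explicit} and relegated to Appendix \ref{app:NRHP.bd}.
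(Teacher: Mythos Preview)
Your structural outline matches the paper's approach: use the factorization \eqref{tildeN}, bound the explicit combinations of parabolic cylinder functions with the factors $e^{\frac{i}{4}\zeta^2\sigma_3}\zeta^{-i\kappa\sigma_3}$, and deduce \eqref{RHP.bd2} from \eqref{RHP.bd1} via $\det\bfN^\RHP\equiv 1$. However, the heart of the proof is missing. Your second step invokes the asymptotic \eqref{asym} and the standard expansions \eqref{para-123} as if they come with remainder bounds uniform in $\kappa$; they do not. These are formal asymptotic statements, and establishing a \emph{quantitative} estimate of the type
\[
\bigl| e^{z^2/4}\, z^{-a}\, D_a(z) \bigr| \lesssim 1
\]
uniformly for $a$ in a compact set and $|z|\ge 1$, $|\arg z|<3\pi/4$, is precisely the content of Appendix~\ref{app:NRHP.bd}. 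The paper obtains this via an integral representation of $D_a$ through the Whittaker function $W_{k,\mu}$ (equations \eqref{Whitt-to-par}--\eqref{Whitt.R.est}), which yields an explicit remainder bound $|R(a,z)|\lesssim |z|^{-3/2}$. Your proposal never supplies this estimate; your final sentence referring the reader to Appendix~\ref{app:NRHP.bd} is circular, since that appendix \emph{is} the proof of the lemma under discussion.

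A secondary issue: your control of the prefactors $1/\beta_{12}$, $1/\beta_{21}$ via a lower bound on $|\beta_{12}|^2=\kappa/\xi$ is not the right mechanism and can fail (e.g.\ if $\rho(0)=0$). What actually happens in \eqref{SimpliPhi+}--\eqref{SimpliPhi-} is that these reciprocals always appear multiplied by $\kappa$, and since $\beta_{12}\beta_{21}=\kappa$ one has $\kappa/\beta_{12}=\beta_{21}$ and $\kappa/\beta_{21}=\beta_{12}$; it is the \emph{boundedness} of $\beta_{12}$ and $\beta_{21}$ that is needed, not a lower bound.
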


The proof of this Lemma is given in Appendix \ref{app:NRHP.bd}.

\begin{lemma}
\label{lemma:KW}
Suppose that $\rho \in H^{2,2}(\bbR)$ and $c : \inf_{z \in \bbR} \left(1-z |\rho(z)|^2 \right) >0$ strictly.
Then, the estimate \eqref{dbar.int.est1}
holds, where the implied constants depend on $\norm{\rho}{H^{2,2}}$ and $c$.
\end{lemma}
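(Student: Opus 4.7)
The plan is to reduce the operator norm bound to a uniform bound on a solid Cauchy integral and then exploit the structure of $\dbar\calR^{(2)}$ given in Lemma \ref{lemma:dbar.R.bd} together with the uniform bounds on $\bfN^\RHP$ and $(\bfN^\RHP)^{-1}$ from Lemma \ref{lemma:RHP.bd}. First, I would observe
\[
\|K_W\|_{L^\infty\to L^\infty} \leq \sup_{z\in\bbC} \frac{1}{\pi}\int_{\bbC} \frac{|W(\zeta)|}{|z-\zeta|}\,dm(\zeta),
\]
and since $W=\bfN^\RHP\,\dbar\calR^{(2)}\,(\bfN^\RHP)^{-1}$, Lemma \ref{lemma:RHP.bd} gives $|W|\lesssim |\dbar\calR^{(2)}|$ pointwise. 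Since $\dbar\calR^{(2)}$ is supported in $\Omega_1\cup\Omega_3\cup\Omega_4\cup\Omega_6$, I would treat the four sectors separately; by the symmetry of the construction it suffices to bound the contribution from $\Omega_1$.

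On $\Omega_1$, parametrize $\zeta=\xi+u+iv$ with $0<v<u$, and write $z=\alpha+i\beta$. Lemma \ref{lemma:dbar.R.bd} yields
\[
|\dbar\calR^{(2)}(\zeta)|\lesssim \bigl(|p_1'(\xi+u)| + \phi(u,v)\bigr)\, e^{-8tuv},
\]
where $\phi(u,v)=-\log\sqrt{u^{2}+v^{2}}$ on $\{u^{2}+v^{2}\leq 1\}$ and $\phi(u,v)=(1+u^{2}+v^{2})^{-1/2}$ otherwise. Crucially, in the sector $\Omega_1$ one has $uv\geq v^{2}$, so $e^{-8tuv}\leq e^{-8tv^{2}}$, which converts the two-variable oscillatory decay into a one-variable Gaussian factor in $v$ alone.

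For each fixed $v$, I would apply Cauchy--Schwarz in $u$, using $p_1'\in L^{2}$ (or the local $L^{2}$ bound on $\phi$), together with the elementary estimate
\[
\Bigl\|\,\tfrac{1}{|z-\zeta|}\Bigr\|_{L^{2}(du)}^{2} = \int_{\bbR}\frac{du}{(u-(\alpha-\xi))^{2}+(v-\beta)^{2}} = \frac{\pi}{|v-\beta|}.
\]
Combining these with the Gaussian factor $e^{-8tv^{2}}$ pulled outside the $u$-integral produces, for each of the two terms in the pointwise bound,
\[
\int_{v}^{\infty} \frac{|\dbar\calR^{(2)}(\zeta)|}{|z-\zeta|}\,du \;\lesssim\; \frac{e^{-8tv^{2}}}{\sqrt{|v-\beta|}},
\]
with constants depending only on $\|\rho\|_{H^{2,2}}$ and $c$.

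Finally, I would integrate over $v$ and extract the $t^{-1/4}$ decay by the rescaling $w=v\sqrt{t}$:
\[
\int_{0}^{\infty} \frac{e^{-8tv^{2}}}{\sqrt{|v-\beta|}}\,dv = t^{-1/4} \int_{0}^{\infty} \frac{e^{-8w^{2}}}{\sqrt{|w-\beta\sqrt{t}\,|}}\,dw,
\]
and observing that $\gamma\mapsto \int_{\bbR} e^{-8w^{2}}/\sqrt{|w-\gamma|}\,dw$ is continuous and vanishes at infinity, hence uniformly bounded in $\gamma\in\bbR$. This yields the desired bound $t^{-1/4}$ uniformly in $z$, and summing the four sector contributions completes the estimate.

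The main obstacle I anticipate is the sharp tracking of the $v$-integration: naive bounds produce either a non-integrable singularity at $v=\beta$ or loss of the $t^{-1/4}$ rate at infinity in $v$. The rescaling above resolves both issues simultaneously, but only because the Gaussian factor is in $v$ alone (which in turn required using $uv\geq v^{2}$ on $\Omega_1$). The symmetric argument must be carried out with the roles of $u$ and $v$ possibly interchanged on sectors where the inequality $u\leq v$ holds, and care must be taken that all constants are uniform in $\xi\in\bbR$.
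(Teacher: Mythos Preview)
Your proposal is correct and follows essentially the same approach as the paper: reduce to a solid Cauchy integral over one sector, invoke the pointwise bound $|W|\lesssim|\dbar\calR^{(2)}|$ from Lemmas \ref{lemma:dbar.R.bd} and \ref{lemma:RHP.bd}, apply Cauchy--Schwarz in $u$ using $\|1/|z-\zeta|\|_{L^2(du)}\lesssim|v-\beta|^{-1/2}$, and then show $\int_0^\infty |v-\beta|^{-1/2}e^{-ctv^2}\,dv\lesssim t^{-1/4}$. The only cosmetic difference is that the paper breaks the sector integral into three pieces $I_1,I_2,I_3$ (corresponding to the $p_i'$, logarithmic, and $(1+|z-\xi|)^{-1}$ contributions) and cites \cite{BJM16} for the final $v$-integral, whereas you handle that integral directly via the rescaling $w=v\sqrt{t}$.
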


\begin{proof}
To prove \eqref{dbar.int.est1}, first note that
\begin{align}
 \norm{K_W f}{\infty} &\leq \norm{f}{\infty} \int_\bbC \frac{1}{|z-\zeta|}|W(\zeta)| \, dm(\zeta) 
                                \end{align}
so that we need only estimate the right-hand integral. We will prove the estimate in the region $ z\in\Omega_1$ since estimates for $\Omega_3$, $\Omega_4$, and $\Omega_6$ are similar.  In the region $\Omega_1$, we may estimate
$$ |W(\zeta)| \leq \norm{\bfN^{\RHP}}{\infty} \norm{(\bfN^{\RHP})^{-1}}{\infty} \left| \dbar R_1\right| |e^{2it\theta}|.$$
Setting $z=\alpha+i\beta$ and $\zeta=(u-\xi)+iv$, the region $\Omega_1$ corresponds to $v \geq 0$, $u \geq v$. We then have from \eqref{dbar.R2.bd}l \eqref{RHP.bd1}, and \eqref{RHP.bd2} that
$$
 \int_{\Omega_1}  \frac{1}{|z-\zeta|} |W(\zeta)| \, dm(\zeta)  \lesssim  I_1 + I_2 + I_3
$$
where
\begin{align*}
I_1 	&=	\int_0^\infty \int_v^\infty \frac{1}{|z-\zeta|} |p_1'(u)| e^{-8tuv} \, du \, dv \\[5pt]
I_2	&=	\int_0^1 \int_v^1 \frac{1}{|z-\zeta|} \left| \log(u^2 + v^2) \right| e^{-8tuv} \, du \, dv\\[5pt]
I_3	&=	\int_0^\infty \int_v^\infty \frac{1}{|z-\zeta|} \frac{1}{1+ |\zeta-\xi|} e^{-8tuv} \, du \, dv.
\end{align*}
We recall from \cite[proof of Proposition C.1]{BJM16} the bound
\begin{equation*}
\norm{\frac{1}{|z-\zeta|}}{L^2(v,\infty)} \leq \frac{\pi^{1/2}}{|v-\beta|^{1/2}}
\end{equation*}
where $\zeta=u-\xi+iv$ and $z=\alpha + i \beta$ (our parameterization of $\zeta$ differs slightly from theirs).  Using this bound and Schwarz's inequality on the $u$-integration we may bound $I_1$  by constants times
$$
(1+\norm{p_1'}{2})  \int_0^\infty \frac{1}{|v-\beta|^{1/2}} e^{-tv^2} \, dv \lesssim t^{-1/4}
$$
(see for example \cite[proof of Proposition C.1]{BJM16} for the estimate)
For $I_2$, we remark that $ |\log(u^2+v^2)| \lesssim 1+ |\log(u^2)|$ and that $1+ |\log(u^2)|$ is square-integrable on $[0,1]$. We can then argue as before to conclude that
$ I_2 \lesssim t^{-1/4}$. Finally, the inequality 
$$ \frac{1}{1+|\zeta-\xi|} \leq \frac{1}{1+u}$$ shows that we can bound $I_3$ in a similar way. 
It now follows that
$$ \int_{\Omega_3} \frac{1}{|z-\zeta|} |W(\zeta)| \, dm(\zeta) \lesssim t^{-1/4} $$
which, together with similar estimates for the integrations over $\Omega_3$, $\Omega_4$, and $\Omega_6$,  proves \eqref{dbar.int.est1}.
\end{proof}

\begin{lemma}
\label{lemma:N3.exp}
For $z=i\sigma$ with $\sigma \rarr +\infty$, the expansion \eqref{N3.exp} holds with 
\begin{equation}
\label{N3.1}
N^{(3)}_1(x,t) = \frac{1}{\pi} \int_{\bbC} N^{(3)}(\zeta;x,t) W(\zeta;x,t) \, dm(\zeta) . 
\end{equation}
\end{lemma}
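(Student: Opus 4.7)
The plan is to start from the integral representation \eqref{DNLS.dbar.int}, specialize to $z=i\sigma$ with $\sigma>0$ large, and decompose the Cauchy kernel by partial fractions as
$$
\frac{1}{z-\zeta}=\frac{1}{z}+\frac{\zeta}{z(z-\zeta)}.
$$
Substituting into \eqref{DNLS.dbar.int} immediately gives
$$
\bfN^{(3)}(z;x,t)=(1,0)+\frac{1}{z}\,\bfN^{(3)}_1(x,t)+\frac{1}{z}\,E(\sigma;x,t),
$$
where $\bfN^{(3)}_1(x,t)$ is the integral on the right-hand side of \eqref{N3.1} and
$$
E(\sigma;x,t):=\frac{1}{\pi}\int_{\bbC}\frac{\zeta}{z-\zeta}\,\bfN^{(3)}(\zeta;x,t)\,W(\zeta;x,t)\,dm(\zeta).
$$
The lemma therefore reduces to verifying (a) that $\bfN^{(3)}W\in L^{1}(\bbC)$, so the formula \eqref{N3.1} is meaningful, and (b) that $E(\sigma;x,t)\to 0$ as $\sigma\to+\infty$.

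For (a), the argument is essentially a recapitulation of the proof of Lemma \ref{lemma:KW}, but easier since the factor $1/|z-\zeta|$ is absent. The support of $W$ is $\Omega_1\cup\Omega_3\cup\Omega_4\cup\Omega_6$, on each of which Lemmas \ref{lemma:dbar.R.bd} and \ref{lemma:RHP.bd} bound $|W(\zeta)|$ by the product of an $L^{2}$-type piece $|p_i'(\Real\zeta)|$, a logarithmic or $(1+|\zeta-\xi|^{2})^{-1/2}$ piece, and the Gaussian factor $e^{-8t|u||v|}$. The Cauchy--Schwarz / Gaussian integration used to obtain \eqref{dbar.int.est1} then gives absolute integrability of $W$, and the uniform $L^{\infty}$-bound on $\bfN^{(3)}$ supplied by \eqref{N3.sol} finishes the verification that $\bfN^{(3)}W\in L^{1}(\bbC)$.

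For (b), I plan to apply dominated convergence. Parameterize $\zeta-\xi=s+iv$ on each sector; on $\Omega_1$, where $s>v>0$, one computes $|z-\zeta|^{2}=(s+\xi)^{2}+(\sigma-v)^{2}$ and $|\zeta|^{2}=(s+\xi)^{2}+v^{2}\leq 4(s+\xi)^{2}+4\xi^{2}$, using $v<s\leq|s+\xi|+|\xi|$. Split $\Omega_1$ into $\{|\zeta|\leq\sigma/2\}$, where $|z-\zeta|\geq\sigma/2$ and hence $|\zeta/(z-\zeta)|\leq 1$, and $\{|\zeta|>\sigma/2\}$, where $|z-\zeta|\geq|s+\xi|$ forces $|s+\xi|\geq 1$ once $\sigma\geq\sigma_{0}(\xi)$, yielding $|\zeta/(z-\zeta)|\leq 2+2|\xi|$. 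The same kind of estimate works on $\Omega_3,\Omega_4,\Omega_6$. This produces a dominating constant $C_\xi$ valid for all $\sigma\geq\sigma_{0}(\xi)$; since $\zeta/(z-\zeta)\to 0$ pointwise on the support of $W$ and $\bfN^{(3)}W\in L^{1}(\bbC)$ by (a), dominated convergence yields $E(\sigma;x,t)\to 0$, which is precisely the $o_{\xi,t}(1/z)$ remainder required in \eqref{N3.exp}.

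The only genuinely delicate point is obtaining the uniform dominating bound on $|\zeta/(z-\zeta)|$ in the far part of each sector; this is where the geometric constraint $|\Real(\zeta-\xi)|\geq|\Imag\zeta|$ defining the $\Omega_i$ is essential (and also explains why the bound is permitted to depend on $\xi$, consistent with the non-uniform nature of the remainder in \eqref{N3.exp}). The sharper, $\xi$-uniform estimate \eqref{N31.est} is a separate question deferred to Lemma \ref{lemma:N31.est}.
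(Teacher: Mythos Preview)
Your proof is correct and follows essentially the same route as the paper: the identical partial-fraction decomposition of $1/(z-\zeta)$, the same reduction to showing $\bfN^{(3)}W\in L^{1}(\bbC)$ plus dominated convergence for the remainder, and the same use of the sector geometry to bound $|\zeta|/|z-\zeta|$ uniformly in $\sigma$. The only cosmetic differences are that the paper carries out the $L^{1}$ estimate on $W$ explicitly here (rather than invoking the ideas of Lemma~\ref{lemma:KW}) and asserts the bound on $|\zeta|/|z-\zeta|$ without the near/far splitting you write out.
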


\begin{proof}
We write   \eqref{DNLS.dbar.int} as 
$$
\bfN^{(3)}(z;x,t) = (1,0) + \frac{1}{z} \bfN^{(3)}_1(x,t) + \frac{1}{\pi z} \int_{\bbC} \frac{\zeta}{z-\zeta} \bfN^{(3)}(\zeta;x,t) W(\zeta;x,t) \, dm(\zeta)
$$
where $\bfN^{(3)}_1$is given by \eqref{N3.1}. If $z=i\sigma$ and $\zeta \in \Omega_1 \cup \Omega_3 \cup \Omega_4 \cup \Omega_6$, it is easy to see that $|\zeta|/|z-\zeta|$ is bounded above by a fixed constant independent of $z$, while $|\bfN^{(3)}(\zeta;x,t)| \lesssim 1$ by the remarks following \eqref{N3.sol}. If we can show that $\int_\bbC |W(\zeta;x,t)| \, dm(\zeta)$ is finite, it will follow from the Dominated Convergence Theorem that 
$$
\lim_{\sigma \rarr \infty} \int_\bbC \frac{\zeta}{i\sigma-\zeta} \bfN^{(3)}(\zeta;x,t) W(\zeta;x,t) \, dm(\zeta) = 0 
$$ 
which implies the required asymptotic estimate. We will estimate $\dint_{\hspace{-1.25mm} \Omega_1} |W(\zeta)| \, dm(\zeta)$ since the other estimates are similar. We have 
$$\Omega_1= \left\{ (u-\xi,v): v \geq 0, \, v \leq u < \infty\right\}.$$ 
Using \eqref{dbar.R2.bd}, \eqref{RHP.bd1}, and \eqref{RHP.bd2}, we may then estimate
$$
\int_{\Omega_1} |W(\zeta;x,t)| \, dm(\zeta)	\lesssim  I_1+I_2+I_3
$$
where
\begin{align*}
I_1	&=	\int_0^\infty \, \int_v^\infty \left| p_1'(\xi-u) \right| e^{-8tuv} \, du \, dv\\
I_2	&=	\int_0^1 \int_v^1 \left| \log(u^2 + v^2) \right| e^{-8tuv} \, du \, dv \\
I_3	&=	\int_0^\infty \, \int_v^\infty \frac{1}{\sqrt{1+u^2+v^2}} e^{-8tuv} \, du \, dv.
\end{align*}
To estimate $I_1$, we use the Schwarz inequality on the $u$-integration to obtain
$$
I_1 	\leq 	\norm{p_1'}{2} \frac{1}{4\sqrt{t}} \int_0^\infty \frac{1}{\sqrt{v}}e^{-  8 tv^2} \, dv
		=		\norm{p_1'}{2} \frac{\Gamma(1/4)}{ 8^{5/4}t^{3/4}}.
$$
Similarly, since $\log(u^2+v^2) \leq \log(2u^2)$ for $v \leq u \leq 1$, we may similarly bound
$$
I_2  \leq  \norm{\log(2u^2)}{L^2(0,1)}\frac{\Gamma(1/4)}{8^{5/4}t^{3/4}}.
$$
Finally,  to estimate $I_3$, we note that $1+u^2+v^2 \geq  1+u^2$ and $(1+u^2)^{-1/2} \in L^2(\bbR^+)$, so we may 
similarly conclude that
$$
I_3 \leq \norm{(1+u^2)^{-1/2}}{2} \frac{\Gamma(1/4)}{ 8^{5/4}t^{3/4}}.
$$
These estimates together show that
\begin{equation}
\label{W.L1.est}
\int_{\Omega_1} |W(\zeta;x,t)| \, dm(\zeta)	\lesssim t^{-3/4}
\end{equation}
and that the implied constant depends only on $\norm{\rho}{H^{2,2}}$.  In particular, the integral \eqref{W.L1.est} is bounded uniformly as $t \rarr \infty$.
\end{proof}

The estimate \eqref{W.L1.est} is also strong enough to prove 
 \eqref{N31.est}.

\begin{lemma}
\label{lemma:N31.est}
The estimate   \eqref{N31.est} 
 holds with constants uniform in $\rho$ in a bounded subset of $H^{2,2}(\bbR)$ and $\inf_{z \in \bbR} \left(1-z|\rho(z)|^2 \right) > 0$ strictly.
\end{lemma}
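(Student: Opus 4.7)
The strategy is to simply combine the pieces already prepared in Lemmas~\ref{lemma:dbar.R.bd}, \ref{lemma:RHP.bd}, \ref{lemma:KW}, and \ref{lemma:N3.exp}. From the integral representation \eqref{N3.1} of $\bfN^{(3)}_1(x,t)$,
\[
\left| \bfN^{(3)}_1(x,t) \right|
\;\leq\; \frac{1}{\pi} \, \norm{\bfN^{(3)}(\dotarg;x,t)}{\infty} \, \int_{\bbC} |W(\zeta;x,t)| \, dm(\zeta),
\]
so the task reduces to bounding each factor uniformly in $\xi$ and in $\rho$ chosen from a bounded subset of $H^{2,2}(\bbR)$ with $\inf_{z \in \bbR}(1-z|\rho(z)|^2) \geq c > 0$.

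For the first factor, Lemma~\ref{lemma:KW} gives $\norm{K_W}{L^\infty \to L^\infty} \lesssim t^{-1/4}$, so for $t$ sufficiently large (depending only on $c$ and $\norm{\rho}{H^{2,2}}$) the Neumann series \eqref{N3.sol} converges and yields $\norm{\bfN^{(3)}}{\infty} \leq 2$, with the bound uniform in $\xi$ and in $\rho$ over the specified set.

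For the second factor, I will argue that the estimate $\int_\bbC |W(\zeta;x,t)|\,dm(\zeta) \lesssim t^{-3/4}$ is already essentially proved inside the proof of Lemma~\ref{lemma:N3.exp}. Indeed, that argument establishes \eqref{W.L1.est} for the sector $\Omega_1$ by splitting the integrand into the three pieces $I_1,I_2,I_3$, applying the Schwarz inequality on the $u$-variable to the first and third terms, using $|\log(u^2+v^2)| \leq \log(2u^2)$ on the second, and reducing each time to a one-dimensional Gaussian-type integral in $v$ of the form $\int_0^\infty v^{-1/2} e^{-8 t v^2}\,dv = \Gamma(1/4)\,(8)^{-5/4} t^{-3/4}$. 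The same reasoning, using Lemma~\ref{lemma:dbar.Ri} and Remark~\ref{rem:dbar.Ri} together with the $L^\infty$-bound on $\bfN^{\RHP}$ and $(\bfN^{\RHP})^{-1}$ from Lemma~\ref{lemma:RHP.bd}, yields the same bound $\lesssim t^{-3/4}$ on the integrals over $\Omega_3$, $\Omega_4$, $\Omega_6$; the only change is that the relevant factor $p_i'(\Real z)$ is controlled in $L^2$ by $\norm{\rho}{H^{2,2}}$ and $c$. Outside these four sectors, $\dbar\calR^{(2)}$ vanishes, so there is no contribution.

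Combining the two bounds, $|\bfN^{(3)}_1(x,t)| \lesssim t^{-3/4}$ with a constant depending only on $\norm{\rho}{H^{2,2}}$ and $c$, which is the claim. The only delicate point is that uniformity must be tracked through each of the auxiliary estimates: the $L^\infty$-bound on $\bfN^{\RHP}$ of Lemma~\ref{lemma:RHP.bd}, the pointwise bound on $\dbar\calR^{(2)}$ of Lemma~\ref{lemma:dbar.R.bd}, and the norm bound on $K_W$ of Lemma~\ref{lemma:KW} are all stated with constants independent of $\xi$ and uniform over $\rho$ in the admissible set; no new obstacle arises at this step, so the present lemma is essentially a bookkeeping corollary of the preceding ones.
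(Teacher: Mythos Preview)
Your proof is correct and follows essentially the same approach as the paper: bound $|\bfN^{(3)}_1|$ by $\norm{\bfN^{(3)}}{\infty}\int_\bbC |W|\,dm$, use Lemma~\ref{lemma:KW} and the Neumann series to control the first factor uniformly, and invoke the $t^{-3/4}$ estimate \eqref{W.L1.est} from the proof of Lemma~\ref{lemma:N3.exp} for the second. The paper's version is terser, but the logic is identical.
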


\begin{proof}
From the representation formula \eqref{N3.1}, Lemma \ref{lemma:KW}, and the remarks following, we have
$$ \left|\bfN^{(3)}_1(x,t) \right| \lesssim \int_\bbC |W(\zeta;x,t)| \, dm(\zeta). $$
In the proof of Lemma \ref{lemma:N3.exp}, we bounded this integral by $t^{-3/4}$ modulo constants with the required uniformities.
\end{proof}



\section{Large-Time Asymptotics}
\label{sec:large-time}

We now use estimates on the RHPs to compute $q(x,t)$ via the reconstruction formula \eqref{DNLS.q}  in the case 
$x>0$, and $t \rarr +\infty$.  Working through the various changes of variables,  we have
\begin{equation}
\label{N3.to.N}
\bfN(z;x,t) = \bfN^{(3)}(z;x,t) \bfN^\RHP(z;\xi) \calR^{(2)}(z)^{-1} \delta(z)^{\sigma_3}
\end{equation}
Recalling \eqref{DNLS.q}, we need to compute the coefficient of $z^{-1}$ in the large-$z$ expansion for $\bfN(z;x,t)$. 


\begin{lemma}
\label{lemma:N.to.NRHP.asy}
For $z=i\sigma$ and $\sigma \rarr +\infty$, the asymptotic relations
\begin{align}
\label{N.asy}
\bfN(z;x,t) 				&=	(1,0) + \frac{1}{z} \bfN_1(x,t) + o\left(\frac{1}{z}\right)\\
\label{N.RHP.asy}
\bfN^\RHP(z;x,t)		&=	I + \frac{1}{z} \bfN^\RHP_1(x,t) + o\left(\frac{1}{z}\right).
\end{align}
hold. Moreover,
\begin{equation}
\label{N.to.NRHP.asy} 
\left(\bfN_1(x,t)\right)_{12} = \left(\bfN^\RHP_1(x,t)\right)_{12} + \bigO{t^{-3/4}}
\end{equation}
and the implied constants are uniform in $\xi$ and $t>0$.
\end{lemma}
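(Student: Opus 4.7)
The plan is to substitute the change-of-variables identity \eqref{N3.to.N} and multiply out the four factors asymptotically along the ray $z = i\sigma$, $\sigma \to +\infty$. The crucial simplification is that such $z$ lies in the sector $\Omega_2$, and by the construction of $\calR^{(2)}$ (only $R_1, R_3, R_4, R_6$ are activated, in $\Omega_1, \Omega_3, \Omega_4, \Omega_6$ respectively) one has $\calR^{(2)}(z)^{-1} = I$ identically on this ray. Hence \eqref{N3.to.N} collapses to
\[
\bfN(z;x,t) = \bfN^{(3)}(z;x,t)\,\bfN^\RHP(z;\xi)\,\delta(z)^{\sigma_3},
\]
and it suffices to combine the $1/z$ expansions of the three remaining factors.

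Each such expansion is available from the preceding analysis. Proposition \ref{prop:N3.est} supplies $\bfN^{(3)}(z;x,t) = (1,0) + z^{-1}\bfN^{(3)}_1(x,t) + o_{\xi,t}(z^{-1})$ with $|\bfN^{(3)}_1(x,t)| \lesssim t^{-3/4}$ uniformly in $\xi$ and $t$. The construction \eqref{tildeN} together with the parabolic-cylinder formulas of Proposition \ref{explicit} and the large-argument expansion \eqref{para-123} yields $\bfN^\RHP(z;\xi) = I + z^{-1}\bfN^\RHP_1(x,t) + o(z^{-1})$ along the positive imaginary axis: the conjugating factor $e^{\frac{i}{4}\zeta^2\sigma_3}\zeta^{-i\kappa\sigma_3}$ built into \eqref{tildeN} is precisely tailored to cancel the oscillatory leading contribution of the parabolic cylinder functions and leave a clean $I + O(1/z)$ tail. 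Finally, Lemma \ref{lemma:delta}(iii) gives $\delta(z)^{\sigma_3} = I + c\,z^{-1}\sigma_3 + O(z^{-2})$ with $c = i\int_{-\infty}^{\xi}\kappa(s)\,ds$.

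Reading off the coefficient of $z^{-1}$ in the product of the three expansions, one obtains
\[
\bfN_1(x,t) = \bfN^{(3)}_1(x,t) + (1,0)\,\bfN^\RHP_1(x,t) + c\,(1,0)\sigma_3.
\]
The algebraic identity $(1,0)\sigma_3 = (1,0)$ kills the second component of the $\sigma_3$-contribution, so
\[
(\bfN_1)_{12} = (\bfN^{(3)}_1)_{12} + (\bfN^\RHP_1)_{12},
\]
and applying the bound \eqref{N31.est} yields \eqref{N.to.NRHP.asy} with an implied constant independent of $\xi$ and uniform for $t>0$. This is the whole point of the setup: while the $o(z^{-1})$ remainders in \eqref{N.asy} and \eqref{N.RHP.asy} themselves need not be uniform in $\xi, t$, the only quantitative ingredient that enters the final comparison is the uniform estimate on $\bfN^{(3)}_1$.

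No serious obstacle is expected. The one point requiring some care is verifying that, once multiplied against $e^{\frac{i}{4}\zeta^2\sigma_3}\zeta^{-i\kappa\sigma_3}$, the parabolic-cylinder representation of Proposition \ref{explicit} really does produce a non-oscillatory $I + O(1/z)$ behavior along the specific direction $z = i\sigma$; this comes down to checking that $z = i\sigma$ falls inside the sectors where the leading asymptotic $D_a(w) \sim w^{a} e^{-w^2/4}$ (rather than the two-term asymptotic) applies for the choices of argument $\zeta e^{\pm 3i\pi/4}$, $\zeta e^{\mp i\pi/4}$ appearing in \eqref{SimpliPhi+}, and that the prefactors produce the requested cancellation—both of which hold by construction.
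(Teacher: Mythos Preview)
Your proof is correct and follows essentially the same approach as the paper: both arguments use the identity \eqref{N3.to.N}, note that $\calR^{(2)}\equiv I$ in $\Omega_2$ so the positive imaginary axis avoids the deformation, multiply out the $1/z$ expansions of the three remaining factors, observe that the diagonal $\delta^{\sigma_3}$-term contributes nothing to the $12$-entry, and finish with the uniform bound \eqref{N31.est}. Your discussion of why the parabolic-cylinder construction yields the clean expansion \eqref{N.RHP.asy} is more explicit than the paper's, which simply says it ``follows by construction,'' but the substance is identical.
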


\begin{proof}
By Lemma \ref{lemma:delta}(iii), 
the expansion
\begin{equation}
\label{delta.sigma.asy} 
\delta(z)^{\sigma_3} = \twomat{1}{0}{0}{1} + \frac{1}{z} \twomat{\delta_1}{0}{0}{\delta_1^{-1}} + \bigO{z^{-2}} 
\end{equation}
holds, with the remainder in \eqref{delta.sigma.asy}
 uniform in $\rho$ in a bounded subset of $H^{2,2}$. 
The form of the asymptotic expansion \eqref{N.RHP.asy} follows by construction, while \eqref{N.asy} follows from \eqref{N3.to.N},  \eqref{N.RHP.asy}, the fact that $\calR^{(2)} \equiv I$ in $\Omega_2$,
and \eqref{delta.sigma.asy}.

To prove  \eqref{N.to.NRHP.asy}, we notice that the 
diagonal matrix in \eqref{delta.sigma.asy} does not affect the $12$-component of $\bfN$. Hence, for 
$z=i\sigma$,
$$ 
\left(\bfN(z;x,t)\right)_{12} = 
		\frac{1}{z}\left(\bfN^{(3)}_1(x,t)\right)_{12} + 
		\frac{1}{z}\left(\bfN^\RHP_1(x,t)\right)_{12} + o\left(\frac{1}{z}\right)
$$
and result now follows from \eqref{N31.est}. 
\end{proof}

We now evaluate the leading asymptotic term using large-$z$ asymptotics of the model RHP.

\begin{proposition}
\label{lemma:N.RHP.asy}

The function 
\begin{equation}
\label{q.recon.bis}
q(x,t) = 2i \lim_{z \rarr \infty} zN_{12}(z;x,t)
\end{equation}
takes the form 
$$ q(x,t) = q_{as}(x,t) + \bigO{t^{-3/4}}$$
where $q_{as}(x,t)$ is given by  \eqref{result-Th1-1}
and the remainder is uniform in $\xi \in \bbR$. 
\end{proposition}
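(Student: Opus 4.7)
The plan is to combine the error estimate provided by Lemma \ref{lemma:N.to.NRHP.asy} with the large-$z$ expansion of the model solution $\bfN^{\RHP}$, and then substitute the explicit expression for $\beta_{12}$ furnished by Lemma \ref{lemma:beta12}. The whole argument is essentially bookkeeping; the analytical work has already been done.

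First I would note that, by Lemma \ref{lemma:N.to.NRHP.asy},
\[
q(x,t) = 2i \lim_{\sigma \to \infty} \sigma \cdot i \cdot \bfN_{12}(i\sigma;x,t) = 2i \left(\bfN^{\RHP}_1(x,t)\right)_{12} + \bigO{t^{-3/4}},
\]
so it suffices to compute $(\bfN^{\RHP}_1)_{12}$. I would take the limit along the positive imaginary axis, which lies in the sector $\Omega_2$. By \eqref{P-matrix}, $\mathcal{P}(\xi) \equiv I$ on $\Omega_2$, so the factorization \eqref{tildeN} simplifies and the asymptotic behavior \eqref{asym} of $\Phi$ yields
\[
\bfN^{\RHP}(\zeta;\xi) = \left(I + \frac{m^{(0)}}{\zeta}\right) + \littleO{\zeta^{-1}}, \qquad \zeta = \sqrt{8t}(z-\xi).
\]
Reverting to the variable $z$ and reading off the coefficient of $1/z$ gives
\[
\left(\bfN^{\RHP}_1(x,t)\right)_{12} = \frac{(m^{(0)})_{12}}{\sqrt{8t}} = \frac{-i\beta_{12}}{\sqrt{8t}},
\]
where I used the relation \eqref{m.to.beta}, i.e. $\beta_{12} = i (m^{(0)})_{12}$. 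Multiplying by $2i$,
\[
q_{\rm as}(x,t) = \frac{2\beta_{12}}{\sqrt{8t}},
\]
as anticipated by the template \eqref{q.as.template}.

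To finish, I would insert the explicit formula from Lemma \ref{lemma:beta12}. The modulus produces
\[
|\alpha_1(\xi)|^2 = \frac{|\beta_{12}|^2}{2} = \frac{\kappa(\xi)}{2\xi},
\]
matching \eqref{result-Th1-alpha}. For the phase, the identity $4t\xi^2 = x^2/(4t)$ (with $\xi = -x/(4t)$) separates cleanly from the $-\kappa(\xi)\log(8t)$ contribution inside $\arg \beta_{12}$, so that
\[
q_{\rm as}(x,t) = \frac{1}{\sqrt{t}} \alpha_1(\xi) e^{-i\kappa(\xi)\log(8t) + ix^2/(4t)},
\]
with $\arg \alpha_1(\xi)$ equal to the residual constant part of $\arg \beta_{12}$, which reproduces the formula for $\arg \alpha_1(\xi)$ in Theorem \ref{thm:main1}. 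The $\bigO{t^{-3/4}}$ remainder is uniform in $\xi \in \bbR$ thanks to the uniformity statements in Lemma \ref{lemma:N3.exp} and Lemma \ref{lemma:N31.est}.

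The only delicate point is the uniformity of the error estimate in $\xi$: the $\littleO{}$ remainder in the expansion of $\Phi$ is \emph{not} uniform in $\xi$, but it does not enter the coefficient $(\bfN^{\RHP}_1)_{12}$, which is extracted algebraically from $m^{(0)}$. The error $\bigO{t^{-3/4}}$ comes exclusively from \eqref{N31.est}, which \emph{is} uniform in $\xi$ and in $\rho$ within a bounded subset of $H^{2,2}(\bbR)$ satisfying the spectral lower bound $c > 0$. This is the only substantive check; everything else is algebra.
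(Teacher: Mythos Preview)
Your proposal is correct and follows essentially the same route as the paper's own proof: invoke Lemma \ref{lemma:N.to.NRHP.asy} to reduce to $(\bfN^{\RHP}_1)_{12}$, read off $m^{(0)}_{12}/\sqrt{8t}=-i\beta_{12}/\sqrt{8t}$ from the large-$\zeta$ expansion of $\bfN^{\RHP}$, and then substitute Lemma \ref{lemma:beta12}. Your version is slightly more detailed (making explicit that the limit is taken in $\Omega_2$ where $\mathcal{P}=I$, and flagging why the uniformity in $\xi$ survives), but the argument is identical in substance.
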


\begin{proof}
By Lemma \ref{lemma:N.to.NRHP.asy} and \eqref{q.recon.bis},
$$q_{as}(x,t)=\lim_{z\rightarrow\infty}\frac{2iz m^{(0)}_{12}}{\zeta}.$$
Recalling that  $m^{(0)}_{12}=  -i\beta_{12}$,  with $\beta_{12}$ given in \eqref{beta12-ampl}-\eqref{beta12-arg} of Lemma \ref{lemma:beta12}, and  that $z$ and $\zeta$ are related 
through \eqref{zeta-z},
we get 
\begin{align*}
q_{as}(x,t)         	&=\lim_{z\rightarrow\infty}\frac{2z\, \beta_{12}}{\sqrt{8t}(z-\xi)}\\
         				&=\frac{1}{\sqrt{t}} \alpha_1(\xi) e^{-i\kappa(\xi)\log 8t+ix^2/(4t)}
\end{align*}

where
$$\kappa(z)=-\frac{1}{2\pi}\log(1-z|\rho(z)|^2),\quad | \alpha_1(\xi)|^2=\frac{|\kappa(\xi)|}{2 |\xi|}$$
and
$$\arg \alpha_1(\xi)=
	\frac{\pi}{4}
	+\arg\Gamma(i\kappa)+\arg\rho(\xi)
	+\frac{1}{\pi}\int_{-\infty}^{\xi}\log|s-\xi| \, d\log(1-s|\rho(s)|^2).
$$

\end{proof}

Theorem \ref{thm:main1} in the case $x>0$, $t>0$ is an immediate consequence of Proposition \ref{lemma:N.RHP.asy}. We discuss 
the remaining three cases in Appendix \ref{app:4-RHPs}.




\section{Gauge Transformation}
\label{sec:gauge}

%
%

Given initial data
$u_0$ for \eqref{DNLS1}, we define gauge-transformed initial data  for \eqref{DNLS2}
\begin{equation*}
q_0(x) = u_0(x) \exp 
							\left( 
								-i \int_{-\infty}^x |u_0(y)|^2 \, dy 
							\right) 
\end{equation*}
and  the associated scattering data $\rho$ for $q_0$. From  these scattering data,  we compute the solution to \eqref{DNLS2}, and thus obtain 
the solution to the Cauchy problem for \eqref{DNLS1} with Cauchy data $u_0$ by the inverse gauge transformation
\begin{equation}
\label{q to u}
u(x,t) = q(x,t) \exp\left( i \int_{-\infty}^x |q(y,t)|^2 \, dy \right).
\end{equation}
To find  the large-time behavior for $u(x,t)$ purely in terms of 
spectral data, it suffices to evaluate large-time asymptotics for 
the expression
\begin{equation*}
\exp\left( i \int_{-\infty}^x |q(y,t)|^2 \, dy \right). 
\end{equation*}
We will prove:

\begin{proposition}
\label{prop:gauge+}
Suppose that $q_0 \in H^{2,2}(\bbR)$ and that $q(x,t)$ solves the 
Cauchy problem \eqref{DNLS2} with initial data $q_0$. 
 Let  $\rho$ be the right-hand scattering data associated to $q_0$ and 
 fix $\xi = -x/(4t)$  with $\xi \neq 0$. 
 We have the asymptotic formulae:
 \begin{enumerate}
 \item[(i)] For $t>0$, 
\begin{align*}
\exp  \Big(i  \int_{-\infty}^x |q(y,t)|^2 \, dy \Big) &= 
	 \exp \Big( -	\frac{i}{\pi} \int_{\xi}^\infty \frac{\log(1-s|\rho(s)|^2)}{s} \ ds \Big) +
	\mathcal{O}_\xi\left(\frac{1}{\sqrt{t}}\right).
\end{align*}

\item[(ii)] Similarly, for $t<0$,
\begin{align*}
\exp \Big( i \int_{-\infty}^x |q(y,t)|^2 \, dy \Big) &= \exp \Big(
	- \frac{i}{\pi} \int_{-\infty}^\xi \frac{\log(1-s|\rho(s)|^2)}{s} \ ds \Big) +
	\mathcal{O}_\xi\left(\frac{1}{\sqrt{t}}\right) .
\end{align*}
\end{enumerate}
\end{proposition}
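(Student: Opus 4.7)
The plan is to substitute the leading-order asymptotic formula for $q$ supplied by Theorem~\ref{thm:main1} directly into the integral $\int_{-\infty}^x |q(y,t)|^2 \, dy$, perform a change of variables to the stationary-phase coordinate $\eta = -y/(4t)$ so that the prefactor $1/|t|$ in $|q_{\rm as}|^2$ is absorbed by the Jacobian, and then control the resulting error before exponentiating.

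From the amplitude identity \eqref{result-Th1-alpha}, $|\alpha_1(\eta)|^2 = |\alpha_2(\eta)|^2 = \kappa(\eta)/(2\eta)$, so $|q_{\rm as}(y,t)|^2 = \kappa(\eta)/(2\eta|t|)$ uniformly in the sign of $y$. Substituting $\eta = -y/(4t)$ (with $dy = -4t\,d\eta$) and tracking the orientation, for $t > 0$ one obtains
$$\int_{-\infty}^x |q_{\rm as}(y,t)|^2 \, dy = 2\int_\xi^\infty \frac{\kappa(\eta)}{\eta}\,d\eta = -\frac{1}{\pi}\int_\xi^\infty \frac{\log(1-s|\rho(s)|^2)}{s}\,ds,$$
reproducing the exponent in (i). The case $t<0$ is structurally identical but yields the range $(-\infty,\xi)$ and the exponent in (ii). It then suffices to bound the integrated error $E(x,t) := \int_{-\infty}^x (|q|^2 - |q_{\rm as}|^2)\,dy$ by $\mathcal{O}_\xi(|t|^{-1/2})$ and to apply the Lipschitz estimate $|e^{iA} - e^{iB}| \le |A - B|$ for real $A, B$.

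To estimate $E(x,t)$, I would decompose $|q|^2 - |q_{\rm as}|^2 = 2\,\mathrm{Re}(\overline{q_{\rm as}}(q - q_{\rm as})) + |q - q_{\rm as}|^2$. The pointwise bound $|q - q_{\rm as}| \lesssim |t|^{-3/4}$ from Theorem~\ref{thm:main1}, combined with the effective localization of $q_{\rm as}$ on the scale $|y| \lesssim |t|$ (visible through the $\eta$-parametrization and the decay of $\rho \in H^{2,2}(\bbR)$), controls $\int |q - q_{\rm as}|^2 \, dy$ by a constant times $|t|^{-1/2}$. The cross term is more delicate: the naive Cauchy--Schwarz estimate using $\|q - q_{\rm as}\|_{L^2} = \mathcal{O}(|t|^{-1/4})$ and $\|q_{\rm as}\|_{L^2} = \mathcal{O}(1)$ produces only $\mathcal{O}(|t|^{-1/4})$, which is short of the claimed rate.

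The main obstacle is precisely sharpening this cross term. The way forward is to exploit the rapid oscillation $e^{iy^2/(4t)}$ of $q_{\rm as}$ against the slowly varying remainder $q - q_{\rm as}$, for example by integrating by parts in $y$ to gain an extra factor $|t|^{-1/2}$ — with the derivative of the remainder controlled by refining the $\dbar$-analysis of Section~\ref{sec:dbar} to an $H^1$-type estimate on $\bfN^{(3)}$. The $\xi$-dependence of the error constant enters naturally through the boundedness of $\kappa(\eta)/\eta$ for $\eta$ near $\xi \neq 0$ (cf.\ the removable discontinuity at $\xi = 0$ discussed after Lemma~\ref{lemma:beta12}), consistent with the $\mathcal{O}_\xi$ notation in the statement.
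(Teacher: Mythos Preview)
Your leading-order computation is correct, and you have correctly identified the main obstacle: the cross term $\int_{-\infty}^x 2\,\Real\bigl(\overline{q_{\rm as}}(q-q_{\rm as})\bigr)\,dy$ cannot be bounded by $\mathcal{O}_\xi(|t|^{-1/2})$ with the tools at hand. The proposed fix via integration by parts does not work: the factor $\overline{q_{\rm as}}$ carries the phase $e^{-iy^2/(4t)}$, but so does $\overline{q}$ to leading order (this is precisely what Theorem~\ref{thm:main1} says), so the product $\overline{q_{\rm as}}\,q$ is \emph{not} rapidly oscillating in $y$ and no stationary-phase gain is available. Moreover, the pointwise bound $|q-q_{\rm as}|\lesssim |t|^{-3/4}$ is uniform in $y$ and carries no decay at spatial infinity, so neither $\|q-q_{\rm as}\|_{L^2(-\infty,x)}$ nor an $H^1$ version of it can be controlled from Theorem~\ref{thm:main1} alone without substantial new input from the $\dbar$-analysis.

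The paper avoids this integration-of-errors issue altogether by a different, purely spectral, argument. The key observation is the \emph{exact} identity
\[
N_{11}^\pm(x,0,t)_\ell \;=\; \exp\Bigl(-\tfrac{i}{2}\int_{-\infty}^x |q(y,t)|^2\,dy\Bigr),
\qquad
N_{11}^\pm(x,0,t)_r \;=\; \exp\Bigl(-\tfrac{i}{2}\int_{+\infty}^x |q(y,t)|^2\,dy\Bigr),
\]
obtained by solving the linear problem \eqref{L} at $\zeta=0$ with the Beals--Coifman normalizations. One then evaluates $\bfN(0;x,t)$ directly from the factorization $\bfN=\bfN^{(3)}\bfN^{\PC}\bigl(\calR^{(2)}\bigr)^{-1}\delta^{\sigma_3}$: at $z=0$ one has $\calR^{(2)}(0)^{-1}=I$, $\bfN^{\PC}(\zeta(0);\xi)=I+\mathcal{O}_\xi(|t|^{-1/2})$ (since $|\zeta(0)|=\sqrt{8|t|}\,|\xi|$), and $\bfN^{(3)}=(1,0)+\mathcal{O}(|t|^{-3/4})$, so that $N_{11}(x,0,t)=\delta^\sharp(0;\xi)+\mathcal{O}_\xi(|t|^{-1/2})$. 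A direct evaluation of $\delta_\ell(0)$ or $\delta_r(0)$ (together with the weak Plancherel identity $\int|q_0|^2=-\frac{1}{\pi}\int\frac{\log(1-s|\rho|^2)}{s}\,ds$ to pass between the left and right normalizations) then yields the claimed exponent. This approach delivers the $\mathcal{O}_\xi(|t|^{-1/2})$ error without ever integrating the pointwise remainder of Theorem~\ref{thm:main1}.
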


\subsection{Beals-Coifman solutions}
Our analysis uses the Beals-Coifman solutions discussed in Paper I, Section 4. 
We recall a few key facts and refer the 
reader to Sections 1.2 and 4 of that paper for further details. Our Beals-Coifman solutions also depend on $t$ since 
the potential $q(x,t)$ and its scattering data evolve in time.

In the $\zeta$ variables, the Beals-Coifman solutions $M_\ell(x,\zeta,t)$ and $M_r(x,\zeta,t)$ are $2 \times 2$ matrix-valued functions defined 
for $\zeta \in \bbC \setminus \Sigma$, solve \eqref{L}, are analytic in $\zeta$, and have the respective 
spatial normalizations
\begin{equation}
\label{BC.norm}
\lim_{x \rarr +\infty} M_r(x,\zeta,t)		=	 \twomat{1}{0}{0}{1}, \qquad
\lim_{x \rarr -\infty}  M_\ell(x,\zeta,t)	=	\twomat{1}{0}{0}{1}.
\end{equation}
By exploiting the symmetry reduction described in Section 1.2 of Paper I, we can form Beals-Coifman solutions
$N_\ell(x,z,t)$ and $N_r(x,z,t)$ with the same respective spatial normalizations  but analytic for 
$z \in \bbC \setminus \bbR$. 
The function $\bfN(z;x,t)$ that solves Problem \ref{prob:DNLS.RH0} (the ``right'' Riemann-Hilbert problem) is the first row of $N_r(x,z,t)$.  Analogously, the first row of $N_\ell(x,z,t)$ solves the corresponding ``left'' Riemann-Hilbert problem.

If $\zeta=0$, \eqref{L} becomes
$d\Psi / dx = P(x)\Psi$ and we can use the normalizations \eqref{BC.norm} to compute 
\begin{equation}
\label{m11R.sol}
M_{11}^\pm(x,0,t)_r
	=	\exp\left( -\frac{i}{2} \int_{+\infty}^x |q(y)|^2 \, dy \right)
\end{equation}
and
\begin{equation}
\label{m11L.sol}
M_{11}^\pm(x,0,t)	_\ell
	=	\exp\left( -\frac{i}{2} \int_{-\infty}^x |q(y)|^2 \, dy \right).
\end{equation}
According to Proposition 2.9, Proposition 5.7  and equation (2.13) of Paper I, if $N_r$ is the solution  to the RHP Problem 5.2 of Paper I, then  $M^\pm_{11}(x,0,t)_r=N^\pm_{11}(x,0,t)_r$. Following a similar argument, we  have $M^\pm_{11}(x,0,t)_\ell=N^\pm_{11}(x,0,t)_\ell$.   One can also directly read off from (6.1) and (6.2) of Paper I that 
$$N^+_{11}(x,0, t)_r=N^-_{11}(x,0, t)_r\,\,\, ,  \,\,\, N^+_{11}(x,0, t)_\ell=N^-_{11}(x,0, t)_\ell.$$
We conclude that
\begin{align}
\label{N11r.sol}
N_{11}^\pm(x,0,t)_r		&=	\exp\left( -\frac{i}{2} \int_{+\infty}^x |q(y,t)|^2 \, dy \right)\\
\label{N11l.sol}
N_{11}^\pm(x,0,t)_\ell	&=	\exp\left( -\frac{i}{2} \int_{-\infty}^x  |q(y,t)|^2 \, dy \right).
\end{align}
As we will see, we can also compute the large-$\xi$ asymptotics of $N_{11}^\pm(x,t;0)_\ell$ and 
$N_{11}^\pm(x,t;0)_r$ since these functions are the first entry in the respective solutions of the ``left'' and
``right'' Riemann-Hilbert problems for $\bfN(z,x,t)$ evaluated at $z=0$. We will obtain asymptotic formulas 
in terms of scattering data alone which prove Proposition \ref{prop:gauge+}.

\subsection{A weak Plancherel identity}
The following  lemma that can be seen as a weak version of a nonlinear  Plancherel identity.
\begin{lemma}
Suppose that $q_0 \in H^{2,2}(\bbR)$ and let $\rho$ be the scattering data. Then, the identity
\begin{equation*}
\exp \Big(i \int_{-\infty}^{+\infty} |q_0(y)|^2 \, dy\Big) = \exp \Big(-\frac{i}{\pi} \int_{-\infty}^\infty \frac{\log\left(1-s|\rho(s)|^2 \right)}{s} \, ds\Big)
\end{equation*}
holds.
\end{lemma}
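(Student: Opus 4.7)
The plan is to compute $N_{11}(+\infty,0,0)_\ell$ in two different ways and equate the results. Setting $t = 0$ (so $q(\cdot,0) = q_0$) and letting $x \to +\infty$ in \eqref{N11l.sol} gives
\begin{equation*}
N_{11}(+\infty,0,0)_\ell = \exp\left(-\tfrac{i}{2}\int_{-\infty}^{+\infty}|q_0(y)|^2\,dy\right),
\end{equation*}
so the identity to be proved is equivalent to $N_{11}(+\infty,0,0)_\ell = \exp\bigl(\tfrac{i}{2\pi}\int \log(1 - s|\rho(s)|^2)/s\,ds\bigr)$.

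The first step is to identify $N_{11}(+\infty,0,0)_\ell$ with the scattering coefficient $\ba(0)$. In the $\zeta$-variable, the relation $\Psi^+ = \Psi^- T$ together with the Beals--Coifman normalizations \eqref{BC.norm} yields, after a short manipulation,
$M_\ell(x,\zeta,0) \to e^{-ix\zeta^2\sigma_3}\,T(\zeta)^{-1}\,e^{ix\zeta^2\sigma_3}$ as $x\to +\infty$; because $\det T = a\ba - b\bb = 1$, the $(1,1)$-entry of $T^{-1}$ is $\ba$, and conjugation by the diagonal exponential leaves this entry unchanged. Since $a$ is even in $\zeta$ by \eqref{symmetry}, $\ba$ descends under $\zeta \mapsto z = \zeta^2$ to a single-valued function of $z$, and at $z = 0$ one obtains
\begin{equation*}
N_{11}(+\infty,0,0)_\ell = \ba(0) = \overline{a(0)}.
\end{equation*}

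The second step is a scalar trace formula for $\log \ba(0)$. Under the soliton-free hypothesis, $a$ is analytic and nonvanishing in $\{\Imag \zeta^2 < 0\}$ and, by \eqref{symmetry}, $\ba$ is analytic and nonvanishing in $\{\Imag \zeta^2 > 0\}$; in the $z$-variable, $a$ extends analytically to $\bbC^-$ and $\ba$ to $\bbC^+$. The product identity $a\ba = (1 - z|\rho|^2)^{-1}$ on $\bbR$ motivates the piecewise-analytic function
\begin{equation*}
H(z) = \begin{cases} \log \ba(z), & z \in \bbC^+,\\ -\log a(z), & z \in \bbC^-,\end{cases}
\end{equation*}
which is analytic off $\bbR$, vanishes at infinity (via $\ba(z) \to 1$, a standard consequence of the large-$z$ Beals--Coifman expansion when $\rho \in H^{2,2}(\bbR)$), and satisfies $H_+ - H_- = \log(a\ba) = -\log(1 - z|\rho|^2)$ on $\bbR$. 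The Plemelj--Sokhotski formula then produces
\begin{equation*}
H(z) = -\frac{1}{2\pi i}\int_{-\infty}^{+\infty}\frac{\log(1 - s|\rho(s)|^2)}{s - z}\,ds,
\end{equation*}
and because $\kappa(0) = 0$ the integrand is regular at $s = 0$; letting $z \to 0$ from above gives
\begin{equation*}
\log \ba(0) = \frac{i}{2\pi}\int_{-\infty}^{+\infty}\frac{\log(1 - s|\rho(s)|^2)}{s}\,ds.
\end{equation*}
Combining this with $\ba(0) = \exp(-\tfrac{i}{2}\int|q_0|^2\,dy)$ and raising both sides to the $-2$ power yields the stated identity.

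The main obstacle is the identification $N_{11}(+\infty,0,0)_\ell = \ba(0)$, which, although standard in scattering theory, must be tracked carefully through the symmetry reduction $\Sigma \to \bbR$ of Paper~I; once that is in hand, the remaining analytic inputs --- integrability of $\log(1 - s|\rho|^2)/s$ at $s = 0$ via $\kappa(0) = 0$, and decay of $\log\ba$ at infinity from $\rho \in H^{2,2}(\bbR)$ --- are routine consequences of the regularity hypothesis.
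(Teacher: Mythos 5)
Your proof is correct and follows essentially the same strategy as the paper's: evaluate a diagonal scattering coefficient at the origin in two ways, once from the explicit solution of the spectral problem at $\zeta=0$ (which gives the exponential of the $L^2$ mass) and once from a trace formula, using the first-order vanishing of $\log(1-s|\rho(s)|^2)$ at $s=0$ to pass to the boundary value. The only differences are cosmetic: the paper computes $a(0)$ via the right-normalized Beals--Coifman solution as $x\to-\infty$ and cites Lemma 5.6 of Paper I for the trace formula, whereas you compute the mirror image $\ba(0)$ via the left-normalized solution as $x\to+\infty$ and re-derive the trace formula from the scalar additive jump relation for $\log\ba$ and $-\log a$.
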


\begin{proof}
The proof consists in  computing  the scattering coefficient $ a(0)$ (defined in  \eqref{matrixT}) in two ways using the construction of   left and right Beals-Coifman solutions $M_\ell, M_r$.

First, it follows from Lemma 
 5.6  of Paper I and the identity $\alpha(\zeta^2) = a(\zeta)$ that 
$$ 
\alpha(z) = 
	\exp \left( 
					\int_\bbR \frac{\log(1-\lam |\rho(\lam)|^2 )}{\lam-z} \, \frac{d\lam}{2\pi i}
			\right).
$$
Since $\rho \in H^{2,2}(\bbR)$, the function $\log(1-\lam|\rho(\lam)|^2)$ has a first-order 
zero at $\lam=0$, so that
$ \alpha(0) = \lim_{z \rarr 0, z \in  \bbC^-} \alpha(z)$ is given by
\begin{equation*}
\alpha(0) = \exp \left( 
								\int_\bbR \frac{\log(1-\lam |\rho(\lam)|^2 )}{\lam} \, \frac{d\lam}{2\pi i}
						\right)
\end{equation*} 
(although these identities are proved in Section 5 of Paper I for $\rho \in \calS(\bbR)$, their proof readily extends to $\rho \in H^{2,2}(\bbR)$). 
On the other hand, from eq.  (4.20) of Paper I, we have 
\begin{equation*}
\alpha(0)  = \lim_{x \rarr -\infty} (M_{11}^-(x,0))_{r}.
\end{equation*}
It follows from \eqref{m11R.sol} that
\begin{equation*}
 \lim_{x \rarr -\infty}(M_{11}^-(x,0))_{r} = \exp \left( \frac{i}{2} \int_{-\infty}^{+\infty} |q(y)|^2 \, dy \right).
\end{equation*}
This concludes the proof of the lemma. 
\end{proof}

\begin{remark}
When $x<0$ we reconstruct $q(x,t)$ using the left RHP, which, as shown in Proposition 6.2 of Paper I, gives a Lipschitz continuous map from soliton-free $H^{2,2}$ scattering data to $H^{2,2}(-\infty,a)$ for any fixed $a \in \bbR$.   When we use the right RHP to recover $q$ for $x>0$,  the reconstruction map is only continuous into $H^{2,2}(a,\infty)$ (see Proposition 6.1 of Paper I) but need not be stable as $x \rarr -\infty$. In this case, the gauge transformation (\ref{q to u}) is still valid for the following reason:
\begin{align}
\label{x to infty}
&\exp \Big (i \int_{-\infty}^x |q(y,t)|^2 \, dy \Big)
	=\exp \Big( 
						i \int_{-\infty}^{+\infty} |q(y,t)|^2 \, dy- i \int_{x}^{+\infty} |q(y,t)|^2 \, dy
			\Big)\\
\nonumber
& \qquad \quad
	= \exp \Big(
						i\int_{-\infty}^{+\infty} |q_0(y)|^2 \, dy \Big)  \exp \Big( -i \int_{x}^{+\infty} |q(y,t)|^2 \, dy 
			\Big)\\
\nonumber
& \qquad \quad 
	=\exp \Big(
						-\frac{i}{\pi}\int_{-\infty}^{+\infty}\frac{\log(1-s|\rho(s)|^2)}{s}\,ds 
			\Big)   
			\exp \Big(  i  \int_{+\infty}^{x} |q(y,t)|^2 \, dy \Big).
\end{align}
The first term of (\ref{x to infty})  only depends on the initial data and
 the second term is stable. 
\end{remark}

\subsection{Proof of Proposition \ref{prop:gauge+}}

\begin{proof} 
The proof is a  consequence of  \eqref{t+,x+},  \eqref{t+ x-}, \eqref{t- x-} and \eqref{t- x+} below.
It suffices to evaluate $N_{11}^\pm(x,0,t)$ for large $t$ from  the spectral data 
via the RHP. 
We  compute an asymptotic expression for the first row of ${N}^\pm(x,0,t)$
using the solution formula
\begin{equation}
\label{N.bis}
 \bfN(z;x,t) = 
	\bfN^{(3)}(z;x,t) \bfN^\RHP(\zeta(z);\xi)
	\calR^{(2)}(z)^{-1}\delta^\sharp(z;\xi)^{\sigma_3}
\end{equation}
 using equations \eqref{N1.def}, \eqref{N2.def}, \eqref{N3.def} of Section \ref{sec:summary}, 
where 
(see \eqref{delta.app} and \eqref{bdelta.app} for the definitions of $\delta_\ell$ and $\delta_r$)
\begin{equation}
\label{delta.sharp}
\delta^\sharp(z;\xi) 	=	\begin{cases}	
									\delta_\ell(z;\xi)		&	t>0,	\,  	x>0 	\\
									\delta_r(z;\xi)			&	t>0, 	\, 	x<0	\\
									\delta_r(z;\xi)^{-1}	&	t<0,	\,	x>0	\\
									\delta_\ell(z;\xi)^{-1}&	t<0,	\, 	x<0
								\end{cases}
\end{equation}
and the respective formulas 
\begin{equation}
\label{Nzero.lim}
\bfN(0;x,t) = 	\begin{cases}
						\lim_{z \rarr 0, z \in \Omega_1} \bfN(z;x,t)		&	t>0, \, x >0 \\
						\lim_{z \rarr 0, z \in \Omega_4} \bfN(z;x,t) 		&  t>0, \, x<0 \\
						\lim_{z \rarr 0, z \in \Omega_3} \bfN(z;x,t)  	& t<0, \, x>0 \\
						\lim_{z \rarr 0, z \in \Omega_6} \bfN(z;x,t)  	& t<0, \, x< 0 .
					\end{cases}
\end{equation}
 Let us examine each right-hand factor of \eqref{N.bis} in turn. Since
$$ \bfN^{(3)}(z;x,t) = (1,0) + \bigO{t^{-3/4}},$$
we need to consider only the last three factors. 

Since $\bfN^\RHP(z;x,t)$ is continuous at $z=0$ (if $\xi \neq 0$), we may evaluate
\begin{align*}
\lim_{z \rarr 0} \bfN^\RHP(\zeta(z);\xi) 
	&=	\bfN^\RHP(\sqrt{8|t|}\xi;\xi) \\
	&=	\twomat{1}{0}{0}{1} + \bigO{\frac{1}{\sqrt{8|t|}\xi}}  
\end{align*}

We show that, in each  case of \eqref{Nzero.lim}, $\lim_{z \rarr 0} \calR^{(2)}(z;x,t)^{-1}$ is the 
identity matrix when the limit is taken in the prescribed sector. 

\begin{itemize}
\item
$t>0$, $x>0$: 	The function $R_1(z)$ is continuous near $z=0$ and $R_1(0)=0$ (Figure \ref{fig:R.++.+-} and equation \eqref{R.++}).
\item
$t>0$, $x<0$: 		The function $R_4(z)$ is continuous near $z=0$ and $R_4(0)=0$ (Figure \ref{fig:R.++.+-} and equation \eqref{R.+-}).
\item
$t<0$, $x>0$: 		The function $R_3(z)$ is continuous near $z=0$ and $R_3(0)=0$ (Figure \ref{fig:R.-+.--} and equation \eqref{R.-+}).
\item
$t<0$, $x<0$: 		The function $R_6(z)$ is continuous near $z=0$ and $R_6(0)=0$ (Figure \ref{fig:R.-+.--} and equation \eqref{R.--}).
\end{itemize}

Finally, we evaluate $\lim_{z \rarr 0} \delta(z,\xi)$ for the appropriate choice of $\delta$. 
\begin{itemize}
\item
$t>0$, $x>0$: 		$\xi<0$ and $z=0$ lies to the right of the branch cut  (Figure \ref{fig:V20++})
\item
$t>0$, $x<0$: 		$\xi>0$ and $z=0$ lies to the left of the branch cut (Figure \ref{fig:V20+-})
\item
$t<0$, $x>0$: 		$\xi>0$ and $z=0$ lies to the left of the branch cut (Figure \ref{fig:V20-+})
\item
$t<0$, $x<0$: 	$\xi<0$ and $z=0$ lies to the right of the branch cut (Figure \ref{fig:V20--})
\end{itemize}
In all cases, $\delta$ is continuous at $z=0$ and $\lim_{z \rarr 0} \delta(z;\xi)^{\sigma_3} = \delta^\sharp(0;\xi)^{\sigma_3}$. Finally we arrive at
\begin{equation}
\label{N.zero.asy}
\bfN(0;x,t) = (\delta^\sharp(0;\xi),0) + \bigO[\xi]{t^{-1/2}}
\end{equation}
where $\delta^\sharp$ is given by \eqref{delta.sharp}. We now use \eqref{N11l.sol} and \eqref{N11r.sol} together with 
\eqref{N.zero.asy} to prove Proposition \ref{prop:gauge+} in four cases.

\medskip
In the following, we assume $\xi$ is fixed, thus letting $x$ and $t$ to infinity.

\medskip

\textbf{The case $t>0$, $x>0$:} 
We solve the right RHP
(see \eqref{RHP.right} and the summary in Appendix \ref{app:4-RHPS.++}).
Using  \eqref{N.zero.asy}, we have 
$$
 N_{11}^+(x,t;0)_r= \delta_\ell(0)  
	+ \mathcal{O}_\xi\left(\frac{1}{\sqrt{t}}\right).
$$
On the other hand, 
$$ \delta_\ell(z) = 
	\exp\left( 	
		\int_{-\infty}^\xi
				\frac{\log(1-s|\rho(s)|^2)}{s-z} 
		\, \frac{ds}{2\pi i} 
	\right)
$$
Hence,
\begin{align*}
\delta(0)
	&=	\lim_{z \rarr 0, \, z \in \bbC^+}
				\exp\left(\int_{-\infty}^\xi
						\frac{\log(1-s|\rho(s)|^2)}{s-z}
						\, \frac{ds}{2\pi i} 
						\right)\\
&=	\exp\left(
					\int^\xi_{-\infty}
							\frac{\log(1-s|\rho(s)|^2)}{s}
							\, \frac{ds}{2\pi i}
					\right).
\end{align*}
and
\begin{equation}
\label{n11-1}
N_{11}^+(x,t;0)_r =  
					\exp\left(
					 \int^\xi_{-\infty}
							\frac{\log(1-s|\rho(s)|^2)}{s}
							\, \frac{ds}{2\pi i}
					\right) +
					 \mathcal{O}_\xi\left(\frac{1}{\sqrt{t}}\right).
\end{equation}
Using \eqref{m11R.sol} and \eqref{n11-1} we conclude that
$$ 
\exp \Big( -\frac{i}{2} \int_{+\infty}^x |q(y,t)|^2 \, dy \Big)
=  \exp \Big( \int^\xi_{-\infty}
 \frac{\log(1-s|\rho(s)|^2)}{s}\, \frac{ds}{2\pi i} \Big)
+ \mathcal{O}_\xi\left(\frac{1}{\sqrt{t}}\right).
$$
which leads to 
\begin{equation}
\label{t+,x+}
\exp \Big( i \int_{-\infty}^x |q(y,t)|^2 \, dy \Big)
=
 \exp \Big( - i \int_\xi^{+\infty}
 \frac{\log(1-s|\rho(s)|^2)}{s}\, \frac{ds}{\pi} \Big)
+ \mathcal{O}_\xi\left(\frac{1}{\sqrt{t}}\right).
\end{equation} 

\medskip

\textbf{The case $t>0$, $x<0$ :}
 We 
use the the left-hand RHP 
(see \eqref{RHP.left} and the summary in Appendix \ref{app:4-RHPS.+-}).
From \eqref{N.zero.asy} we conclude that 
$$N_{11}^-(x,t;0)_\ell = \delta_r(0) + \mathcal{O}_\xi\left( t^{-1/2} \right)
$$
Now
\begin{align*}
\delta_r(0)	&=	
	\lim_{z \rarr 0, \, z \in \Omega_4}
		\exp \left(  
					-\int^{\infty}_\xi
						\frac{\log(1-s|\rho(s)|^2)}{s-z} 
					\, \frac{ds}{2\pi i} 
				\right)\\
	&= \exp \left(  
				- \int^{\infty}_\xi
						\frac{\log(1-s|\rho(s)|^2)}{s} 
					\, \frac{ds}{2\pi i} 
				\right)
\end{align*}
 This gives
\begin{equation}
\label{n11-2}
N_{11}^-(x,t;0)_\ell = \exp \left(  
				 -\int^{\infty}_\xi
						\frac{\log(1-s|\rho(s)|^2)}{s} 
					\, \frac{ds}{2\pi i} 
				\right) + \mathcal{O}_\xi\left( \frac{1}{\sqrt{t}} \right).
\end{equation}
We deduce from \eqref{n11-2} and \eqref{m11L.sol} that
\begin{equation}
\label{t+ x-}
\exp \Big( i \int_{-\infty}^x |q(y,t)|^2 \, dy \Big) 
= \exp \Big(
- i \int^{\infty}_\xi
	\frac{\log(1-s|\rho(s)|^2)}{s} 
	\, \frac{ds}{\pi }  \Big)
 + \mathcal{O}_\xi\left( \frac{1}{\sqrt{t}} \right).
\end{equation}

\medskip

\textbf{The case $t<0$, $x>0$ :} 
We use the asymptotic formulas 
for  the right-hand RHP \eqref{RHP.right} of  Appendix \ref{app:4-RHPS.-+}.
From \eqref{N.zero.asy} we conclude that
$$
N_{11}^+(x,t;0)_r = \delta_r(0)^{-1}  + \mathcal{O}_\xi\left( t^{-1/2} \right).
$$
Now
\begin{align*}
\delta_r(0)^{-1}	&=	
	 \exp \left(  
				\int^{\infty}_\xi
						\frac{\log(1-s|\rho(s)|^2)}{s} 
					\, \frac{ds}{2\pi i} 
				\right).
\end{align*}
This gives
\begin{equation}
\label{n11-4}
N_{11}^+(x,t;0)_r = \exp \left(  
				\int^{\infty}_\xi
						\frac{\log(1-s|\rho(s)|^2)}{s} 
					\, \frac{ds}{2\pi i} 
				\right) + \mathcal{O}_\xi\left( \frac{1}{\sqrt{t}} \right).
\end{equation}
From \eqref{n11-4} and \eqref{m11R.sol},  we get
$$ \exp \Big(
-\frac{i}{2} \int_{+\infty}^x |q(y,t)|^2 \, dy \Big)
= \exp \Big(
\int^{\infty}_\xi
	\frac{\log(1-s|\rho(s)|^2)}{s} 
	\, \frac{ds}{2\pi i}  \Big)
 + \mathcal{O}_\xi\left( \frac{1}{\sqrt{t}} \right)
$$
which leads to 
\begin{equation}
\label{t- x+}
\exp \Big(i \int_{-\infty}^x |q(y,t)|^2 \, dy  \Big)
=\exp \Big( -  i \int_{-\infty}^\xi
	\frac{\log(1-s|\rho(s)|^2)}{s} 
	\, \frac{ds}{\pi }  \Big)
 + \mathcal{O}_\xi\left( \frac{1}{\sqrt{t}} \right).
\end{equation}

\medskip

\textbf{The case $t<0$, $x<0$:}
 Using the asymptotic formula
for the  left-hand RHP of Appendix \ref{app:4-RHPS.--}.
 and  \eqref{N.zero.asy} we have
$$
N_{11}^-(x,t;0)_\ell= \delta_r(0)^{-1}  + \mathcal{O}_\xi\left( t^{-1/2} \right)
$$
 From
\begin{align*}
\delta_r(0)^{-1}	&=	
	 \exp \left(  
				- \int_{-\infty}^\xi
						\frac{\log(1-s|\rho(s)|^2)}{s} 
					\, \frac{ds}{2\pi i} 
				\right),
\end{align*}
 we have
\begin{equation}
\label{n11-3}
N_{11}^-(x,t;0)_\ell = \exp \left(  
				-\int_{-\infty}^\xi
						\frac{\log(1-s|\rho(s)|^2)}{s} 
					\, \frac{ds}{2\pi i} 
				\right) + \mathcal{O}_\xi\left( \frac{1}{\sqrt{t}} \right).
\end{equation}
Finally from \eqref{n11-3} and \eqref{m11L.sol},
\begin{equation}
\exp \Big( \label{t- x-}
\int_{-\infty}^x |q(y,t)|^2 \, dy \Big)
=\exp \Big( - i \int_{-\infty}^\xi
	\frac{\log(1-s|\rho(s)|^2)}{s} 
	\, \frac{ds}{\pi }  \Big)
 + \mathcal{O}_\xi\left( \frac{1}{\sqrt{t}} \right).
\end{equation}
\end{proof}

%
%

\appendix  

%
%

\section{Solutions to model scalar RHPs}
\label{app:delta-asy}

\subsection{Large-$z$ Asymptotics}

Since $\kappa \in H^{2,2}(\bbR)$, it follows that $s \kappa(s) \in L^1(\bbR)$ and we may expand
\begin{align}
\label{log.delta.infty}
 \int_{-\infty}^\xi\frac{\kappa(s)}{s-z} \, ds
	&=	-\frac{1}{z}  \int_{-\infty}^\xi\ \kappa(s) \, ds 
			- \frac{1}{z} \int_{-\infty}^\xi\ \frac{s}{s-z} \kappa(s) \, ds\\[5pt]
	&= 	-\frac{1}{z}  \int_{-\infty}^\xi\ \kappa(s) \, ds + \bigO{\frac{1}{z^2}}.
	\nonumber
\end{align}
where the implied constant is uniform in $z$ with 
$-\pi+\eps < \arg(z-\xi) < \pi-\eps$ for a fixed $\eps>0$. Using \eqref{log.delta.infty} in \eqref{delta.app} we 
conclude that
\begin{equation*}
\delta_\ell(z)	\sim	1 - \frac{i}{z} \int_{-\infty}^\xi\ \kappa(s) \, ds + \bigO{\frac{1}{z^2}},
\end{equation*}
and, by a similar argument
\begin{equation*}
\delta_r(z) 		\sim	1	+	\frac{i}{z} \int_\xi^\infty \kappa(s) \, ds +  \bigO{\frac{1}{z^2}}.
\end{equation*}

\subsection{Asymptotics Near the Stationary Phase Point}

The following asymptotic relations for $\delta_\ell$, $\delta_r$, $\delta_r^{-1}$, and $\delta_\ell^{-1}$
are used to compute leading asymptotics near the critical point $\xi$ and determine the model RHPs.
Define complex powers of $(z-\xi)$ using the appropriate branch of the 
logarithm ($-\pi < \arg (\zeta-\xi) < \pi$ for $\delta_\ell^{\pm 1}$, and $0 < \arg (\zeta-\xi) < 2\pi$ for $\delta_r^{\pm 1}$).  
As $z \rarr \xi$ in the respective domains of $\delta_\ell$ and $\delta_r$,
\begin{align}
\label{delta.xi.asy.+}
\left|\delta_\ell(\zeta)-\delta_{0\ell}(z-\xi)^{i\kappa(\xi)}	\right| 			
	&\lesssim	-|z-\xi| \log |z-\xi|		\\[5pt]
\label{bdelta.xi.asy+}
\left|\delta_r(\zeta)-\delta_{0r}(z-\xi)^{i\kappa(\xi)} \right|					
	&\lesssim	-|z-\xi| \log |z-\xi|		\\[5pt]
\label{delta.xi.asy.-}
\left|\delta_r(\zeta)^{-1}-\delta_{0r}^{-1}(z-\xi)^{-i\kappa(\xi)}	\right| 	
	&\lesssim	-|z-\xi| \log |z-\xi|		\\[5pt]
\label{bdelta.xi.asy.-}
\left|\delta_\ell(\zeta)^{-1}-\delta_{0\ell}^{-1}(z-\xi)^{-i\kappa(\xi)} \right|		
	&\lesssim	-|z-\xi| \log |z-\xi|,
\end{align}
where the implied constants depend on $\norm{\kappa}{H^{2,2}}$ and a fixed $\eps>0$.  The constants
are uniform in $z$ with
$-\pi+\eps < \arg(z-\xi) < \pi-\eps$ (for $\delta_\ell^{\pm 1}$) or $\eps < \arg (z-\xi) < 2\pi-\eps$ (for $
\delta_r^{\pm 1}$).

The constants $\delta_{0\ell}$ and $\delta_{0r}$
are defined as follows.
Let $\chi_-$ be the characteristic function of $(\xi-1,\xi)$, and let $\chi_+$ be the characteristic function of 
$(\xi,\xi+1)$. Then:

\begin{align*}
\delta_{0\ell}	&=		\exp
									\left( 
										i \int_{-\infty}^\xi \frac{\kappa(s) - \chi_-(s)\kappa(\xi)}{s-\xi} \, ds
									\right),	
									\\
\delta_{0r}		&=	e^{\pi \kappa(\xi)} 		
								\exp
									\left(
										-i\int_\xi^\infty \frac{\kappa(s) - \chi_+(s)\kappa(\xi)}{s-\xi} \, ds
									\right),
\end{align*}
These asymptotics are easily deduced from the integral formulas \eqref{delta.app} and \eqref{bdelta.app}. We illustrate the ideas for $\delta_\ell$; these computations are standard but we include them for the reader's convenience.

Using \eqref{delta.app}, we compute,
for $z \in \bbC \setminus (-\infty,\xi]$, 
\begin{align*}
\delta_\ell(z)	&=	\exp\left( 
								i  \int_{\xi-1}^{\xi} 
									\frac{\kappa(\xi)}{s-z} \, ds 
								\right) \cdot
						\exp\left( 
								i \int_{-\infty}^\xi
										\frac{\kappa(s) - \chi_-(s)\kappa(\xi)}{s-z} \, ds 
								\right)\\
					&=	(z-\xi)^{i\kappa(\xi)}  e^{i\beta(z;\xi)}
\end{align*}
where
\begin{equation*}
\beta(z;\xi)	=	-\kappa(\xi) \log(z-\xi+1)
	+  \int_{-\infty}^\xi  \frac{\kappa(s) - \chi_-(s) \kappa(\xi)}{s-z} \, ds.
\end{equation*}
We will show that $\beta(z,\xi)$ is continuous at $z=\xi$ and we set $\delta_{0\ell}(\xi)=\exp(i\beta(\xi,\xi))$. We wish to prove that
\begin{equation}
\label{delta.xi.beta.diff.est}
\left| \delta(z) - \delta_0(\xi)(\xi-z)^{-i\kappa(\xi)} \right| \lesssim_{\, \rho, \phi} -|z-\xi| \log |z-\xi| 
\end{equation}
as $z \rarr \xi$ for $z-\xi = r e^{i\phi}$ with 
 $-\pi < \phi < \pi$
and implied constants independent of $\xi \in \bbR$. To do this, it suffices to show that
$$ \left| \beta(\xi+r e^{i\phi};\xi)-\beta(\xi;\xi) \right| \lesssim_{\, \rho, \phi}  -r \log r $$
where the implied constants have the same uniformity.
But
\begin{align}
\label{delta.xi.beta.diff}
\beta(\xi+re^{i\phi};\xi) - \beta(\xi;\xi)
	&=	\kappa(\xi) \log(1+re^{i\phi}) \\
\nonumber
	&\qquad	+
			  \int_{-\infty}^\xi 
			 		\left( \frac{1}{s-z}-\frac{1}{s-\xi} \right) \left( \kappa(s) -\chi(s)\kappa(\xi) \right) 
			 	\, ds \\
\nonumber
	&=		\int_{\xi-1}^{\xi} 
					\left( \frac{1}{s-z}-\frac{1}{s-\xi} \right) 
					\left(\kappa(s)-\kappa(\xi) \right) 
				\, ds + \bigO{r}
\end{align}
where the implied constants in $\bigO{r}$ depend on $\norm{\kappa}{\infty}$  and are independent of $\xi \in \bbR$. The first right-hand integral in the last line of \eqref{delta.xi.beta.diff} may be written
\begin{align*}
 I(r;\xi) 	&	= 	re^{i\phi} 
 						\int_{\xi-1}^{\xi} 
 							\frac{1}{s-\xi - re^{i\phi}} \frac{\kappa(s)-\kappa(\xi)}{s-\xi} \, ds \\
 			&	=	re^{i\phi} 
 						\int_{\xi-1}^{\xi} 
 							 \frac{1}{s-\xi-re^{i\phi}} \kappa'(\xi) \, ds \\
 			&\quad	+	 re^{i\phi} 
 							\int_{\xi-1}^{\xi}
 								\frac{1}{s-\xi-re^{i\phi}} 
 									\frac{\int_{\xi}^s 
 										(s-y)\kappa''(y) \, dy}{s-\xi} \, ds \\[5pt]
 			&=	I_1(r;\xi) + I_2(r;\xi)
\end{align*}
By explicit computation,
\begin{equation}
\label{delta.xi.beta.diff1}
I_1(r;\xi) = re^{i\phi} \kappa'(\xi) 
 \left( \log(-re^{i\phi}) - \log(-1-re^{i\phi}) \right)
\lesssim -r \log r 
\end{equation}
with constants depending on $\kappa$ through $\norm{\kappa'}{\infty}$  and  otherwise
independent of $\xi$. On the other hand, since
$|s-y|/|s-\xi| \leq 1$ we may estimate
\begin{align*}
\left| I_2(r;\xi) \right| 	&\leq 	r \norm{\kappa''}{2} 
 \int_{\xi-1}^{\xi} 
\frac{1}{|s-\xi-re^{i\phi}|} \, ds
\end{align*}
The right-hand integral is easily seen to equal
$$ 
\int_{-\frac{1}{r}-\cot \phi}^{-\cot \phi} 
\frac{1}{\sqrt{\mu^2+1}}\, d\mu $$
which is $\bigO{\log r}$ as $r \darr 0$ with constants depending on $\phi$. 
These constants are 
bounded if 
 $\eps < \phi < \pi - \eps$ or $-\pi+\eps < \phi < -\eps$  
for some  fixed $\eps>0$. For such $\phi$ we have 
\begin{equation}
\label{delta.xi.beta.diff2}
|I_2(r;\xi)| \lesssim_{\, \rho, \phi} -r \log r
\end{equation}
with constants  independent of $\xi \in \bbR$ and depending on $\rho$ through $\norm{\rho}{H^{2,2}}$ since 
$\norm{\rho}{H^{2,2}}$ controls $\norm{\kappa''}{2}$. 

Since $\norm{\rho}{H^{2,2}}$ also controls $\norm{\kappa}{\infty}$ and $\norm{\kappa'}{\infty}$, we conclude from
\eqref{delta.xi.beta.diff}, \eqref{delta.xi.beta.diff1}, and \eqref{delta.xi.beta.diff2}  that \eqref{delta.xi.beta.diff.est} holds.

\section{Four model RHPs}
\label{app:4-RHPs}
We summarize the key formulas leading to $q_{\mathrm{as}}(x,t)$. 
We will write $\kappa$ for $\kappa(\xi)$ when it appears in 
formulas. We denote by $\eta(z;\xi)$ or simply $\eta$ the function 
$$ \eta(\zeta;\xi) = (z-\xi). $$
Thus $\eta^{\pm i\kappa}$ is shorthand for $(z-\xi)^{\pm i\kappa(\xi)}$, etc. 
We will make use of the identities 
$$ 
\overline{\Gamma(z)}=\Gamma(\overline{z}), \quad 
|\Gamma(i\kappa)|^2 = \frac{ \pi }{ \kappa \sinh(\pi \kappa)}
$$
as well as 
$$ e^{-2\pi\kappa} = 1- \xi |\rho(\xi)|^2 = 1- \xi |\brho(\xi)|^2 $$
in the computations.
Recall that the symbols $\delta$, $\delta_0$, and $\delta_\pm$ are 
defined at the beginning of each subsection and 
\emph{have different meanings in each of them} as indicated in  \eqref{delta.++.def}, \eqref{delta.+-.def}, 
\eqref{delta.-+.def}, and \eqref{delta.--.def}.

\subsection{The Case $t>0$, $x>0$}
\label{app:4-RHPS.++}

To prepare the initial RHP for steepest descent we set
$ \bfN^{(1)} = \bfN \delta_\ell^{-\sigma_3}$. Throughout this subsection,
\begin{equation}
\label{delta.++.def}
\delta = \delta_\ell, 
\quad
\delta_\pm =(\delta_\ell)_\pm,
\quad
\delta_0=\delta_{0\ell}.
\end{equation}
From \eqref{RHP.right} we get a new RHP for $\bfN^{(1)}$ with jump matrix $V^{(1)}$ where
\JumpMatrixFactors{V^{(1)}}{V1++}
	{\Twomat{1}{0}{-\dfrac{\delta_-^{-2} z \rhobar}{1-z|\rho|^2}e^{-2it\theta}}{1}}
	{\Twomat{1}{\dfrac{\delta_+^2\rho}{1-z|\rho|^2}e^{2it\theta}}{0}{1}}
	{\Twomat{1}{\rho \delta^2e^{2it\theta}}{0}{1}}
	{\Twomat{1}{0}{-z\rhobar \delta^{-2}e^{-2it\theta}}{1}}
	
$\bfN^{(1)}$ is then ready for steepest descent. We reduce to 
a mixed $\dbar$-RHP in the new variable $\bfN^{(2)} = \bfN^{(1)} \calR$ where $\calR$ is a piecewise continuous matrix-valued  as shown in Figure \ref{fig:R.++.+-}. Here

\RMatrix{R.++}{z\overline{\rho(z)}\delta(z)^{-2}}
			{\xi \overline{\rho(\xi)}\delta_0^{-2}\eta^{-2i\kappa}}
			{-\dfrac{\rho(z)\delta_+^2(z)}{1-z|\rho(z)|^2}}
			{-\dfrac{\rho(\xi) \delta_0^2}{1-\xi|\rho(\xi)|^2}\eta^{2i\kappa}}
			{-\dfrac{z \overline{\rho(z)} \delta_-^{-2}}{1-z|\rho(z)|^2}}
			{-\dfrac{\xi \overline{\rho(\xi)} \delta_0^{-2}}{1-\xi|\rho(\xi)|^2}\eta^{-2i\kappa}}
			{\rho(z)\delta(z)^2}
			{\rho(\xi)\delta_0 \eta^{2i\kappa}}

The resulting unknown $\bfN^{(2)}$ satisfies a mixed $\dbar$-RHP with jump matrix
$V^{(2)}$ defined on the oriented contours of $\Sigma^{(2)}_\xi$.

As discussed above we reduce to a model RHP with contour $\Sigma$ and jump matrix \eqref{V2}
where $V^{(2)}_0$ is shown in Figure \ref{fig:V20++} and
\begin{equation}
\label{r.++}
r_\xi
	=	\rho(\xi)\delta_{0}^2 
			e^{-2i\kappa}e^{-2i\kappa\log(\sqrt{8t})}
			e^{4it\xi^2}
\end{equation}

Using \eqref{beta12.+},  \eqref{alpha.sq.bis},  \eqref{alpha.arg.bis}, and \eqref{r.++}, we conclude that
\begin{align*}
|\alpha(\xi)|	^2	&=	\frac{\kappa(\xi)}{2\xi},	\\
\arg \alpha(\xi)	
	&=	\frac{\pi}{4}
				+\arg\Gamma(i\kappa)+\arg\rho(\xi)\\
\nonumber
	&\quad
				+\frac{1}{\pi}\int_{-\infty}^{\xi} \log | s - \xi | \, d\log(1-s|\rho(s)|^2).
\end{align*}

\subsection{The Case $t>0$, $x<0$}
\label{app:4-RHPS.+-}

To prepare for steepest descent we set
$\bfN^{(1)} =\bfN \delta_r^{-\sigma_3}$.
Throughout this subsection,
\begin{equation}
\label{delta.+-.def}
\delta = \delta_r,
\quad
\delta_\pm = (\delta_r)_\pm,
\quad
\delta_0=\delta_{0r}.
\end{equation}
The new RHP for $\bfN^{(1)}$ has jump matrix $ \bV^{(1)}$ where
\JumpMatrixFactors{\bV^{(1)}}{V1+-}
	{\Twomat{1}{0}{-z\overline{\brho} \delta^{-2}e^{-2it\theta}}{1}}
	{\Twomat{1}{\brho \delta^2 e^{2it\theta}}{0}{1}}
	{\Twomat{1}{\dfrac{\brho \delta_-^2}{1-z|\brho|^2}e^{2it\theta}}{0}{1}}
	{\Twomat{1}{0}{\dfrac{-z\overline{\brho} \delta_+^{-2}}{1-z|\brho|^2}e^{-2it\theta}}{1}}
$\bfN^{(1)}$ is then ready for steepest descent. As before we reduce to a mixed $\dbar$-RHP problem n the new variable $\bfN^{(2)} = \bfN^{(1)} \calR$, where $\calR$ is the piecewise continuous matrix-valued function as shown in Figure \ref{fig:R.++.+-}. We have the following formulas for $R_1$, $R_3$, $R_4$, and $R_6$:
\RMatrix{R.+-}{\dfrac{z\overline{\brho(z)}\delta_+(z)^{-2}}{1-z|\rho(z)|^2}}
			{\dfrac{\xi \brho(\xi) \delta_0^{-2}}{1-\xi|\rho(\xi)|^2}\eta^{-2i\kappa}}
			{-\brho(z) \delta_+(z)^2}
			{-\brho(\xi) \delta_0^2 \eta^{2i\kappa}}
			{-z\overline{\brho(z)}\delta_-(z)^{-2}}
			{-\xi \overline{ \brho(\xi)}\delta_0^{-2}\eta^{-2i\kappa}}
			{\dfrac{\brho(z)}{1-z|\brho(z)|^2}\delta_-(z)^2}
			{\dfrac{\brho(\xi)}{1-\xi |\brho(\xi)|^2}\delta_0^2 \eta^{2i\kappa}}
The new unknown $\bfN^{(2)}$ satisfies a mixed $\dbar$-RHP in $\bfN^{(2)}$ with jump matrix $V^{(2)}$ on $\Sigma^{(2)}_\xi$. 

Following the procedure outlined at the beginning of this section we arrive at a model RHP with contour $\Sigma^{(2)}_0$ and jump matrix \eqref{V2} where $V_0^{(2)}$ is shown in Figure \ref{fig:V20+-} and
\begin{align}
\label{r.+-}
\br_{\xi}& =  \brho(\xi) \delta_{0}^2 e^{-2i\kappa(\xi)\log\sqrt{8t}}e^{4it\xi^2}.\\
\nonumber
       & =\brho(\xi)e^{2\kappa\pi} \exp\left(-2i\int_\xi^\infty \frac{\kappa(s) - \chi(s)\kappa(\xi)}{s-\xi} \, ds\right) e^{-2i\kappa(\xi)\log\sqrt{8t}}e^{4it\xi^2}
\end{align}

From \eqref{beta12.+}, \eqref{alpha.sq.bis},  \eqref{alpha.arg.bis}, and \eqref{r.+-}, we conclude that
\begin{align*}
|\alpha(\xi)|	^2	&=	\frac{\kappa(\xi)}{2\xi}	\\
\arg \alpha(\xi)	
	&=	-\frac{3\pi}{4}
				+\arg\Gamma(i\kappa)+\arg\rho(\xi)  \\
\nonumber
	&\quad
				+\frac{1}{\pi}\int_{-\infty}^{\xi}\log(\xi-s)d\log(1-s|\rho(s)|^2).
\end{align*}

\subsection{The Case $t<0$, $x>0$}
\label{app:4-RHPS.-+}

In what follows we will set $t'=-t$ so that $|t|=t'$ and 
$$ \theta(z;x,t) = -\left( -z \frac{x}{t'} + 2z^2 \right). $$

To prepare the initial RHP for steepest descent we take $\bfN^{(1)}=\bfN \delta_r^{\sigma_3}$.
Throughout this subsection
\begin{equation}
\label{delta.-+.def}
\delta = \delta_r^{-1},
\quad
\delta_\pm = \left(\delta_r^{-1}\right)_\pm,
\quad
\delta_0=\delta_{0r}^{-1}.
\end{equation}
The resulting RHP for $\bfN^{(1)}$ has jump matrix $V^{(1)}$ where
\JumpMatrixFactors{V^{(1)}(z)}{V1-+}
	{\Twomat{1}{\rho\delta^2 e^{-2it'\theta}}{0}{1}}
	{\Twomat{1}{0}{-z\overline{\rho}e^{2it'\theta}}{1}}
	{\Twomat{1}{0}{\dfrac{-z\rhobar \delta_-^{-2}}{1-z|\rho|^2}e^{2it'\theta}}{1}}
	{\Twomat{1}{\dfrac{\rho \delta_+^2}{1-z|\rho|^2}e^{-2it'\theta}}{0}{1}}
We write $\bfN^{(1)}=\bfN^{(2)} \calR$ where the piecewise continuous matrix-valued function $\calR$ is shown in Figure \ref{fig:R.-+.--}, and the functions $R_i$ are described as follows:
\RMatrix{R.-+}{-\dfrac{\rho(z) \delta_+(z)^2}{1-z|\rho(z)|^2}}
		{-\dfrac{\rho(\xi) \delta_0^2}{1-\xi|\rho(\xi)|^2} \eta^{2i\kappa}}
		{z\overline{\rho(z)}\delta_+(z)^{-2}}
		{\xi\overline{\rho(\xi)}\delta_0^{-2} \eta^{-2i\kappa}}
		{\rho(z)\delta_-(z)^2}
		{\rho(\xi)\delta_0^2 \eta^{2i\kappa}}
		{\dfrac{-z\overline{\rho(z)}\delta_-(z)^{-2}}{1-z|\rho(z)|^2}}
		{\dfrac{-\xi\overline{\rho(\xi)}\delta_0^{-2}}{1-\xi|\rho(\xi)|^2} \eta^{-2i\kappa}}

The function $\bfN^{(2)}$ obeys a mixed $\dbar$-RHP with jump matrix $V^{(2)}$ that we describe below.

Following the standard procedure we arrive at a model RHP with contour $\Sigma^{(2)}_0$ and jump matrix \eqref{V2} 
where $V_0^{(2)}$ is shown in Figure \ref{fig:V20-+} and
\begin{align}
\label{r.-+}
r_{\xi}& =\rho(\xi)\delta_{0r}^{-2} e^{2i\kappa(\xi)\log\sqrt{8t'}}e^{-4it'\xi^2}.\\
\nonumber
       & =\rho(\xi)e^{-2\kappa\pi} \exp\left(2i\int_\xi^\infty \frac{\kappa(s) - \chi(s)\kappa(\xi)}{s-\xi} \, ds\right) e^{2i\kappa(\xi)\log\sqrt{8t'}}e^{-4it'\xi^2}
\end{align}
From \eqref{beta12.-}, \eqref{alpha.sq.bis}, and \eqref{alpha.arg.bis}, and \eqref{r.-+}, we conclude that
\begin{align*}
|\alpha(\xi)|	^2	&=	\frac{\kappa(\xi)}{2\xi}	\\
\arg \alpha(\xi)	
	&=	\frac{3\pi}{4}
				-\arg\Gamma(i\kappa)+\arg\rho(\xi)\\
\nonumber
	&\quad 
				+\frac{1}{\pi}\int^{\infty}_{\xi}\log | s - \xi | \, d\log(1-s|\rho(s)|^2).
\end{align*}

\subsection{The Case $t<0$, $x<0$}
\label{app:4-RHPS.--}

To prepare for steepest descent we set $\bfN^{(1)} = \bfN \delta_\ell^{\sigma_3}$.
Throughout this subsection,
\begin{equation}
\label{delta.--.def}
\delta=\delta_\ell^{-1},
\quad
\delta_\pm = (\delta_\ell^{-1})_\pm,
\quad
\delta_0=\delta_{0\ell}^{-1}.
\end{equation}
The 
resulting RHP for $\bfN^{(1)}$ has jump matrix $ \bV^{(1)}$ where
\JumpMatrixFactors{\bV^{(1)}}{V1--}
	{\Twomat{1}{\dfrac{\brho(z)}{1-z|\brho(z)|^2}\bdelta_-^2 e^{-2it'\theta}}{0}{1}}
	{\Twomat{1}{0}{\dfrac{-z\overline{\brho(z)}\bdelta_+^{-2}}{1-z|\brho(z)|^2}e^{2it'\theta}}{1}}
	{\Twomat{1}{0}{-z\overline{\brho(z)}\bdelta^{-2}e^{2it'\theta}}{1}}
	{\Twomat{1}{\brho(z)\bdelta^2 e^{-2it'\theta}}{0}{1}}
We can now deform to a mixed $\dbar$-RHP by passing to $\bfN^{(2)} = \bfN^{(1)} \calR$ where $\calR$ is the piecewise continuous matrix-valued function shown in Figure \ref{fig:R.-+.--}, and the functions $R_i$ have the boundary values:
\RMatrix{R.--}{-\brho(z) \delta_+(z)^2}
			{-\brho(\xi) \delta_0^2 \eta^{2ik}}
			{\dfrac{z\brho(z)}{1-z|\rho(z)|^2}\delta_+^{-2}}
			{\dfrac{\xi \overline{\brho(\xi)}}{1-\xi|\rho(\xi)|^2}\delta_0^{-2} \eta^{-2i\kappa}}
			{\dfrac{\brho(z)}{1-z|\brho(z)|^2} \delta_-^2}
			{\dfrac{\brho(\xi)}{1-\xi |\brho(\xi)|^2} \delta_0^2 \eta^{2i\kappa}}
			{-z\overline{\brho(z)} \delta(z)^{-2}}
			{-\xi \overline{\brho(\xi)} \delta_0^{-2} \eta^{-2i\kappa}}
 The new unknown $\bfN^{(2)}$ satisfies a mixed $\dbar$-RHP with jump matrix $\bV^{(2)}$ on 
 $\Sigma^{(2)}_\xi$. 

Following the procedure outlined at the beginning of the section we arrive at a model RHP with contour $\Sigma^{(2)}_0$ 
and jump matrix \eqref{V2}, 
where $V^{(2)}_0$ is shown in Figure \ref{fig:V20--} and
\begin{equation}
\label{r.--}
\br_{\xi}=\brho(\xi)\delta_{0\ell}^{2} e^{2i\kappa(\xi)\log\sqrt{8t'}}e^{-4it'\xi^2}.
\end{equation}

From \eqref{beta12.-}, \eqref{alpha.sq.bis}, \eqref{alpha.arg.bis}, and \eqref{r.--}, we conclude that
\begin{align*}
|\alpha(\xi)|	^2	&=	\frac{\kappa(\xi)}{2\xi}\\
\arg \alpha(\xi)	
	&=	- \frac{\pi}{4}
					-\arg\Gamma(i\kappa)+\arg\rho(\xi)\\
\nonumber
	&\quad
					+\frac{1}{\pi}\int_{\xi}^{\infty}\log | s - \xi | \, d\log(1-s|\rho(s)|^2).
\end{align*}

\section{Formulae and Wronskian  for parabolic cylinder functions}
\label{app:Phi-sol}

We record the solution formulae for $\Phi(\zeta,\xi)$ arising in the factorization of the model 
RHP in each of the four cases $\pm t>0$,$\pm x>0$; see Step 4 of    Section  \ref{sec:summary} 
and especially 
\eqref{RHP.Model.factor} for the set-up; see also \eqref{Phi.DE} and the comments following for the solution 
method. These formulae together with the Wronskian  identity for parabolic cylinder functions, allow the  evaluations  of   \eqref{Phi.Wronski.+} and \eqref{Phi.Wronski.-} that in turn provide  $\beta_{12}$ in terms of frozen-coefficient scattering data.

We give explicit formulae for the solutions of the equations \eqref{Phi.DE} with asymptotic behavior
$$ \Phi(\zeta;\xi) \sim e^{\mp \frac{i}{4}\zeta^2 \sigma_3} \zeta^{\pm i\kappa \sigma_3} 
	\left( I + \frac{m^{(1)}}{\zeta} + \littleO{\zeta^{-1}} \right). $$
We denote by $D_a(z)$ the usual parabolic cylinder function, i.e., the solution to   \eqref{para-cyl} 
with asymptotics  prescribed in \eqref{para-123}.
The identity  \eqref{D.recur} 
is easily be derived from the relation 
\begin{equation}
\label{U.to.D}
U(a,z) = D_{-a-\frac{1}{2}}(z)
\end{equation}
(see \cite[\S 12.1]{DLMF}\footnote{\url{http://dlmf.nist.gov/12.1}}) and 
\cite[12.8.2]{DLMF}\footnote{\url{http://dlmf.nist.gov/12.8.E2}}. 
We also record the Wronskian identity
\begin{equation}
\label{D.Wronski}
W(D_{a}(z),D_{a}(-z)) = \frac{\sqrt{2\pi}}{\Gamma(-a)}
\end{equation}
 which is a consequence of \eqref{U.to.D} and 
\cite[eq. (12.2.11)]{DLMF}\footnote{\url{http://dlmf.nist.gov/12.2.E11}}. We use this identity 
to compute $\beta_{12}$ 
 (see proof of Lemma \ref{lemma:beta12}).

For the $+$ case of \eqref{Phi.DE}, taking $-\pi < \arg \zeta < \pi$ corresponding to $t>0$, $x>0$,
the solution $\Phi(\zeta;\xi)$  is given by expressions \eqref{SimpliPhi+} and \eqref{SimpliPhi-} of
Proposition \ref{explicit}.

For the $+$ case of \eqref{Phi.DE}, taking $0 < \arg \zeta < 2\pi$ corresponding to 
$t>0$, $x<0$, the solution $\Phi(\zeta;\xi)$ is given by
$$
\begin{cases}
\Twomat{ e^{-\frac{3\pi } {4} \kappa }																
			D_{i\kappa}(\zeta e^{-\frac{3i\pi}{4}})}				
		{-\dfrac{i\kappa}{\beta_{21}} e^{\frac{\pi}{4}(\kappa-i)	}							
			D_{-i\kappa-1}(\zeta e^{-\frac{\pi i}{4}})}
		{\dfrac{i\kappa}{\beta_{12}}																	
			e^{-\frac{3\pi}{4}(\kappa+i)}
			D_{i\kappa-1}(\zeta e^{-\frac{3i\pi}{4}})}		
		{e^{\frac{\pi \kappa}{4}}  D_{-i\kappa}( e^{-i\pi/4} \zeta) }							
	&	\zeta \in \bbC^+,\\
\\
\Twomat{e^{-\frac{7\pi \kappa}{4}} 																
		D_{i\kappa}(\zeta e^{-\frac{7\pi i}{4}})}   
		{\dfrac{-i\kappa}{\beta_{21}}																	
		 e^{\frac{5\pi}{4}(\kappa-i)}D_{-i\kappa-1}(\zeta e^{-\frac{5\pi i}{4}})}                                                                                                                       
		{\dfrac{i\kappa}{\beta_{12}}																	
			e^{-\frac{7\pi}{4}(\kappa+i)}
			D_{i\kappa-1}(\zeta e^{-\frac{7 \pi i}{4}})}                  					
		{e^{\frac{5\pi\kappa}{4}}D_{-i\kappa}(\zeta e^{-\frac{5\pi i}{4}})}				
	&	\zeta \in \bbC^-
\end{cases}
$$

For the $-$ case of \eqref{Phi.DE}, taking $0< \arg \zeta < 2\pi$ corresponding to  $t<0$, $x>0$,
the solution $\Phi(\zeta;\xi)$ is 
$$
\begin{cases}
\Twomat{e^{\frac{\pi}{4} \kappa} 																	
					D_{-i\kappa} (\zeta e^{-\frac{\pi i}{4}})}											
			{\dfrac{i\kappa}{\beta_{21}} e^{ -\frac{3\pi}{4} (\kappa + i)}					
					D_{i\kappa-1}(\zeta e^{-\frac{3\pi i}{4}})}
			{\dfrac{-i\kappa}{\beta_{12}} e^{{ \frac{\pi}{4} (\kappa}-i)}						
					D_{-i\kappa-1}(\zeta e^{-\frac{\pi i}{4}})}																	
			{e^{-\frac{3\pi \kappa}{4}}  D_{i\kappa}( \zeta e^{-\frac{3\pi i}{4}} )}		
			&	 \zeta \in \bbC^+,\\
			\\
\Twomat{e^{\frac{5\pi}{4}\kappa} D_{-i\kappa} (e^{-\frac{5\pi i}{4}}\zeta)}			
			{\dfrac{i \kappa}{\beta_{21}} e^{{ -\frac{7\pi}{4} (\kappa+i)}}					
				 D_{i\kappa-1}(\zeta e^{-\frac{7\pi i}{4}}) }																	
			{\dfrac{ -i\kappa }{\beta_{12}}e^{\frac{5\pi}{4}(\kappa-i)}						
				D_{-i\kappa-1}(\zeta e^{-5\pi i/4})}
			{e^{-\frac{7\pi}{4}\kappa} D_{i\kappa} ( e^{-\frac{7\pi i}{4}} \zeta)}			
			&  \zeta \in \bbC^-. 
\end{cases}
$$

Finally, for the $-$ case of \eqref{Phi.DE}, taking $-\pi < \arg \zeta < \pi$ corresponding to $t<0$, $x<0$,
the solution $\Phi(\zeta;\xi)$ is 
$$
\begin{cases}
\Twomat{e^{\frac{\pi \kappa}{4}} D_{-i\kappa}(\zeta e^{-\frac{\pi i}{4}})}				
	{\dfrac{i\kappa}{\beta_{21}} e^{-\frac{3\pi}{4}(\kappa+i)} 								
			D_{i\kappa-1} (\zeta e^{-\frac{3\pi i}{4}})}
	{\dfrac{-i\kappa}{\beta_{12}} e^{\frac{\pi}{4}(\kappa-i)} 								
			D_{-i\kappa-1} (\zeta e^{-\frac{\pi i}{4}})}
	{e^{\frac{-3\pi\kappa}{4}}																			
			D_{i\kappa}(\zeta e^{-\frac{3\pi i}{4}})   }
	&	\zeta \in \bbC^+,\\
\\
\Twomat{e^{-\frac{3\pi \kappa}{4}} D_{-i\kappa} (\zeta e^{\frac{3\pi i}{4}})}			
	{\dfrac{i\kappa}{\beta_{21}} e^{\frac{\pi}{4}(\kappa+i)} 								
			D_{i\kappa-1} (\zeta e^{\frac{\pi i}{4}})}
	{\dfrac{-i\kappa}{\beta_{12}} e^{-\frac{3\pi}{4}(\kappa-i)} 								
			D_{-i\kappa-1}(\zeta e^{\frac{3\pi i}{4}})}		
	{e^{\frac{\pi\kappa}{4}}D_{i\kappa}(\zeta e^{\frac{i\pi}{4}})   }						
	&	\zeta \in \bbC^-
\end{cases}
$$

From these formulae and the identities \eqref{D.recur} and \eqref{D.Wronski}, we can compute (cf. \eqref{Phi.Wronski.+}--\eqref{Phi.Wronski.-})
\begin{equation}
\label{Phi.Wronski.+.comp}
\Phi^-_{11}\Phi^+_{21} - \Phi_{21}^-\Phi_{11}^+
	=	\begin{cases}
			\dfrac{1}{\beta_{12}}e^{-\pi \kappa/2} e^{\pi i/4} \dfrac{\sqrt{2\pi}}{\Gamma(-i\kappa)}	&	t>0, \, x>0 \\
			\\
			\dfrac{1}{\beta_{12}}e^{-\pi \kappa/2} e^{i\pi /4} \dfrac{\sqrt{2\pi}}{\Gamma(-i\kappa)} e^{-2\pi \kappa}	& t>0, \, x<0
		\end{cases}
\end{equation}
and 
\begin{equation}
\label{Phi.Wronski.-.comp}
\Phi^-_{11}\Phi^+_{21} - \Phi_{21}^-\Phi_{11}^+
	=	\begin{cases}
			\dfrac{1}{\beta_{12}}e^{-\pi \kappa/2}e^{3i\pi/4} \dfrac{\sqrt{2\pi}}{\Gamma(i\kappa)} e^{2\pi \kappa},	
					&	t<0, \, x >0 \\
			\\
			\dfrac{1}{\beta_{12}}e^{-\pi \kappa/2} e^{3\pi i/4} \dfrac{\sqrt{2\pi}}{\Gamma(i\kappa)},
					&	t<0, \, x<0
		\end{cases}
\end{equation}


\section{$L^\infty$-Bounds for the Model RHP}
\label{app:NRHP.bd}

We prove the bounds \eqref{RHP.bd1} and \eqref{RHP.bd2}. 
it suffices to prove \eqref{RHP.bd1} since 
the bound \eqref{RHP.bd2} follows from \eqref{RHP.bd1} and the fact that $\bfN^\PC(\zeta;\xi)$ takes values in $SL(2,\bbC)$.
together with explicit estimates on the parabolic cylinder functions $D_a(\zeta)$, following a similar discussion in 
\cite[\S3.1.1, Lemma 3.5]{CP14}.

\begin{lemma}
Let $c_1$ and $c_2$ be strictly positive constants, and 
suppose that $\rho \in H^{2,2}(\bbR)$ with $\rho$ with $\norm{\rho}{H^{2,2}} \leq c_1$, $\inf_{z \in \bbR} (1-z |\rho(z)|^2) \geq c_2$. Then,
the estimate
$$
\left| \bfN^\PC(\zeta;\xi) \right|	\lesssim	1\\
$$
holds,
where the implied constant depend only on $c_1$ and $c_2$.
\end{lemma}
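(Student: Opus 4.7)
The plan is to exploit the factorization
\eqref{RHP.Model.factor} to write
$$
\bfN^{\PC}(\zeta;\xi) = \Phi(\zeta;\xi)\,\mathcal P(\xi)\,e^{\pm\frac{i}{4}\zeta^2\sigma_3}\zeta^{\mp i\kappa\sigma_3},
$$
and to estimate each factor separately. Since $\bfN^{\PC}$ takes values in $SL(2,\bbC)$, controlling $\bfN^{\PC}$ in sup-norm also controls its inverse, so the desired estimate on $\bfN^{\PC}$ automatically yields \eqref{RHP.bd2} as well.

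First I would bound $\mathcal P(\xi)$. Its entries are built from $r_\xi$, $\br_\xi$, and $1/(1-\xi|r_\xi|^2)$, $1/(1-\xi|\br_\xi|^2)$. Because $\rho, \brho \in H^{2,2}(\bbR) \hookrightarrow L^\infty(\bbR)$, the values $|r_\xi|, |\br_\xi|$ are bounded by $\norm{\rho}{H^{2,2}}$ times constants depending only on $\norm{\kappa}{\infty}$ (through the factors $\delta_0^{\pm 2}$ which satisfy $e^{-\norm{\kappa}{\infty}/2} \le |\delta_0^{\pm 1}| \le e^{\norm{\kappa}{\infty}/2}$ by Lemma \ref{lemma:delta}(ii)); and the quantity $1-\xi|r_\xi|^2 = 1-\xi|\rho(\xi)|^2$ is bounded below by $c_2>0$ by hypothesis. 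Together with $\norm{\kappa}{\infty} \le (2\pi)^{-1}|\log c_2|$, this shows $|\mathcal P(\xi)| \lesssim 1$ and $|\mathcal P(\xi)^{-1}| \lesssim 1$ uniformly in $\xi \in \bbR$.

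Next I would handle $\Phi(\zeta;\xi)\,e^{\pm\frac{i}{4}\zeta^2\sigma_3}\zeta^{\mp i\kappa\sigma_3}$. The explicit formulas for $\Phi$ from Proposition \ref{explicit} and Appendix \ref{app:Phi-sol} express each entry as a parabolic cylinder function $D_{\pm i\kappa}$ or $D_{\pm i\kappa-1}$ evaluated at a rotated argument $\zeta e^{i\phi_j}$, with prefactors of the form $e^{c\kappa}$, $i\kappa/\beta_{12}$, or $i\kappa/\beta_{21}$. The exponential prefactors are uniformly bounded since $|\kappa|$ is. For the prefactors $i\kappa/\beta_{12}$ and $i\kappa/\beta_{21}$, I would use \eqref{beta12-ampl} together with $\beta_{12}\beta_{21}=\kappa$ to write
$$
\left|\frac{i\kappa}{\beta_{12}}\right|^2 = |\beta_{21}|^2 = \xi\kappa(\xi) = -\frac{\xi}{2\pi}\log\bigl(1-\xi|\rho(\xi)|^2\bigr),
$$
which is a non-negative quantity bounded above uniformly in $\xi \in \bbR$ by $\norm{\rho}{H^{2,2}}^2/(2\pi)$ (via the inequality $-\log(1-y) \le y/(1-y)$ on $[0,1-c_2]$), and which, in particular, remains bounded as $\xi \to 0$ because $\kappa(\xi) \to 0$ linearly in $\xi$.

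With the prefactors controlled, I would then use the asymptotic formulas \eqref{para-123} for $D_a(z)$ in the three sectors $|\arg z| < 3\pi/4$, $\pi/4 < \arg z < 5\pi/4$, $-5\pi/4 < \arg z < -\pi/4$ to verify, sector by sector in $\zeta$, that the combination $\Phi(\zeta;\xi)\,e^{\pm\frac{i}{4}\zeta^2\sigma_3}\zeta^{\mp i\kappa\sigma_3}$ tends to the identity as $|\zeta|\to\infty$ with the correct $\bigO{1/\zeta}$ remainder. Since the factorization \eqref{RHP.Model.factor} is designed precisely to absorb the leading $z^{\pm i\kappa}e^{\mp z^2/4}$ asymptotics of the $D_{\pm i\kappa}$ factors in each sector (upon which the rotated arguments $\zeta e^{-3i\pi/4}$, $\zeta e^{-i\pi/4}$, etc.\ sit within the requisite domains of validity), this large-$\zeta$ estimate reduces to verifying termwise that the correction term $\sqrt{2\pi}\,e^{ia\pi}z^{-a-1}e^{z^2/4}/\Gamma(-a)$ supplied by \eqref{para-123} provides the $1/\zeta$ correction in the right entries. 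For $\zeta$ in a compact set, boundedness is automatic from continuity of $D_a$, uniformity of the bound $|\kappa(\xi)|\le(2\pi)^{-1}|\log c_2|$, and uniform non-vanishing and continuity of $\Gamma(\pm i\kappa)$ away from the integers (which is ensured since $\kappa$ is real and bounded). Combining the compact-set estimate with the uniform sector-by-sector asymptotic estimate yields the claimed uniform bound $|\bfN^{\PC}(\zeta;\xi)| \lesssim 1$.

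The main obstacle is bookkeeping: one must track the precise branch of $\arg\zeta$ (either $(-\pi,\pi)$ or $(0,2\pi)$ depending on the sign case, as indicated after \eqref{RHP.Model.factor}) and match the rotated arguments of $D_{\pm i\kappa}$ to the correct asymptotic regime in \eqref{para-123} inside each of the four sectors of $\Sigma^{(2)}_0$, as well as in each of the four cases $\pm t>0, \pm x>0$. Once this matching is verified uniformly (the uniformity being guaranteed by the bound on $|\kappa|$ and the strict interior containment of $\arg \zeta$ in the relevant sectors away from their boundary rays), the bound follows.
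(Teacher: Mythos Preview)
Your overall strategy matches the paper's: use the factorization \eqref{RHP.Model.factor}, bound the constant matrix $\mathcal P(\xi)$ from the spectral hypotheses, and then control $\Phi(\zeta;\xi)$ times the factors $e^{\pm\frac{i}{4}\zeta^2\sigma_3}\zeta^{\mp i\kappa\sigma_3}$ via explicit parabolic-cylinder formulas. Your treatment of $\mathcal P(\xi)$ and of the prefactors $i\kappa/\beta_{12}$, $i\kappa/\beta_{21}$ is in fact more explicit than what the paper spells out, and it is correct (the bound $\sup_\xi |\xi\rho(\xi)| \lesssim \norm{\rho}{H^{2,2}}$ that underlies your estimate on $\xi\kappa(\xi)$ follows from $x\rho\in H^1$, which in turn follows from the $H^{2,2}$ norm by an integration-by-parts interpolation).

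The gap is at the step where you invoke \eqref{para-123}. Those relations are asymptotic expansions, written with ``$\sim$''; they do not, as stated, come with a remainder bound that is uniform for the parameter $a$ in a compact set. What you actually need is a quantitative estimate of the form
\[
\left| e^{z^2/4}\, z^{-a}\, D_a(z) \right| \lesssim 1, \qquad |z|\ge 1,\ |\arg z| < \tfrac{3\pi}{4},
\]
with the implied constant depending only on a bound for $|a|$ (here $a\in\{\pm i\kappa,\ \pm i\kappa-1\}$ with $|\kappa|\le (2\pi)^{-1}|\log c_2|$). Your plan asserts this uniformity (``the uniformity being guaranteed by the bound on $|\kappa|$'') but does not supply it, and this is precisely where the technical work lies. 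The paper closes this gap by passing to the Whittaker function via $D_a(\zeta)=2^{\frac14+\frac a2}\zeta^{-1/2}W_{\frac14+\frac a2,-\frac14}(\zeta^2/2)$ and using a Barnes-type contour integral representation for $W_{k,\mu}$ to extract the leading two terms plus an explicit remainder $R(a,z)$ obeying $|R(a,z)|\lesssim |z|^{-3/2}$ uniformly for $a$ in compacts (following \cite{CP14}). Once that estimate is in hand, the sector-by-sector matching you describe goes through. Without it, appealing to \eqref{para-123} and ``continuity on compacts'' leaves the large-$\zeta$ uniformity in $\kappa$ unjustified.
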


\begin{proof}
We give the bound for the region $\Omega_1$ since estimates for the other regions are similar. 
 Using  \eqref{tildeN}, \eqref{P-matrix},  \eqref{SimpliPhi+}, \eqref{SimpliPhi-}   and  writing 
$$p_1(\xi)=r_\xi/(1-\xi|r_\xi|^2),$$ 
we have that,   for $\zeta$ with $0 < \arg(\zeta) < \pi/4$,  the entries $N_{ij} $ of $\bfN^\PC$
are given by 
\begin{align*}
N_{11}(\zeta;\xi) 	
	&=	e^{\pi\kappa/4}e^{-\frac{i}{4}\zeta^2} \zeta^{i\kappa} 
				D_{-i\kappa}(\zeta e^{-i\pi/4}) \\[5pt]
N_{12}(\zeta;\xi) 
	&=	p_1(\xi)
				e^{\frac{i}{4}\zeta^2}\zeta^{-i\kappa}
				e^{\pi \kappa/4} 
				D_{-i\kappa}(\zeta e^{-i\pi/4})\\
	&\quad
		+ \frac{1}{\beta_{21}} 
			e^{-3\pi \kappa/4} 
			e^{-3\pi i/4} ( i\kappa)
			e^{\frac{i}{4}\zeta^2} \zeta^{-i\kappa}
			D_{i\kappa-1} (\zeta e^{-3\pi i/4}) \\[10pt]
N_{21}(\zeta;\xi)
	&=	e^{\pi \kappa/4} e^{-\pi i/4}
			(-i\kappa)
			e^{-\frac{i}{4}\zeta^2}\zeta^{i\kappa}
			D_{-i\kappa-1}(\zeta e^{-i\pi/4})	\\[5pt]
N_{22}(\zeta;\xi)
	&=	\frac{1}{\beta_{12}}
			p_1(\xi)
			e^{\frac{i}{4}\zeta^2} \zeta^{-i\kappa}
			e^{\frac{\pi }{4}\kappa} e^{-i \pi/4} 
			(-i\kappa) D_{-i\kappa-1}(\zeta e^{-i\pi/4})\\
	&\quad
		+	
		e^{ -3 \pi \kappa/4}
			e^{\frac{i}{4}\zeta^2} \zeta^{-i\kappa}
			D_{i\kappa}(\zeta e^{-3 i \pi /4}).
\end{align*}
Since
$$
D_{-i\kappa}(\zeta e^{-i\pi/4}) 
	\sim e^{-\pi \kappa/4} \zeta^{-i\kappa} e^{\frac{i}{4}\zeta^2},
\quad
D_{i\kappa}(\zeta e^{-3i\pi/4})
	\sim	e^{3 \pi \kappa/4} e^{\frac{i}{4} \zeta^2}
$$
it is clear that $\bfN^\PC(\zeta;\xi) \rarr I$ as $\zeta \rarr \infty$ in $\Omega_1$. 
To prove the uniform $L^\infty$-estimate, we need a quantitative version of the asymptotics. 
We claim that, uniformly in $a$, in compacts of $\bbC$  and $z$ with $|z| \geq 1$ and $|\arg(z)| < 3\pi /4$, the estimate
\begin{equation}
\label{para-cyl-est}
\left| e^{z^2/4} z^{-a} D_a(z) \right| \lesssim 1
\end{equation}
 holds.
The uniform $L^\infty$- estimate will follow from the boundedness of $\kappa$, the fact that $\left| e^{\frac{i}{4}\zeta^2} \right| \leq 1$ for $\zeta \in \Omega_1$,  and the estimates
\begin{align*}
\left| 	e^{-\frac{i}{4} \zeta^2} \zeta^{i\kappa} D_{-i\kappa}(\zeta e^{-i/\pi/4}) \right| 			&\lesssim	1\\
\left| 	e^{-\frac{i}{4} \zeta^2} \zeta^{-i\kappa} D_{i\kappa}(\zeta e^{-3i\pi/4}) \right| 			&\lesssim	1\\
\left|	e^{-\frac{i}{4} \zeta^2} \zeta^{-i\kappa} D_{-i\kappa-1}(\zeta e^{-i\pi/4} ) \right| 		&\lesssim 	1\\
\left|  e^{-\frac{i}{4} \zeta^2} \zeta^{-i\kappa} D_{i\kappa -1}(\zeta e^{-3\pi i/4}) \right|		&\lesssim 	1
\end{align*}
which are a consequence of \eqref{para-cyl-est}.

To complete the proof, we recall from \cite{CP14} the proof of \eqref{para-cyl-est}. The parabolic 
cylinder function $D_a(z)$ can be expressed in terms of the Whittaker function $W_{k,\mu}(z)$   \cite{WW15}
(see Lemma \ref{lemma:Whittaker} below)
via the formula
\begin{equation}
\label{Whitt-to-par}
D_a(\zeta) = 2^{\frac{1}{4}+\frac{a}{2}} \zeta^{-1/2} W_{\frac{1}{4} + \frac{a}{2},-1/4}(\zeta^2/2)
\end{equation}
while, for $|\arg(z)| < 3\pi /2$, the Whittaker function admits the integral representation
\begin{equation}
\label{Whitt}
W_{\frac{1}{4}+\frac{a}{2},-1/4}(z) = e^{-z/2} z^{\frac{1}{4}+\frac{a}{2}}
		\left[ 	
			1 -\frac	{\Gamma(\frac{3}{2}-a)\Gamma(1-\frac{a}{2})}
						{\Gamma(\frac{1}{2}-\frac{a}{2})\Gamma(-\frac{a}{2})} \frac{1}{z}	
		+ R(a,z)
		\right]
\end{equation}
where
\begin{equation}
\label{Whitt.R}
R(a,z) = 
				\frac{1}{\Gamma(\frac{1}{2}-\frac{a}{2})\Gamma(-\frac{a}{2})}
						\int_{-i\infty-\frac{3}{2}}^{+i\infty-\frac{3}{2}}
								z^\zeta \Gamma(\zeta) \Gamma(-\zeta+\frac{1}{2}-\frac{a}{2})
											\Gamma(-\zeta-\frac{a}{2}) \, d\zeta										
\end{equation}
The computations in \cite[proof of Lemma 3.5]{CP14} show that 
\begin{equation}
\label{Whitt.R.est}
\left| R(a,z) \right| \lesssim |z|^{-3/2}\left(\frac{3}{2}\pi - |\arg(z)|\right)^{-3/2},
\end{equation}
where the implied constant depends only on $c_1$ and $c_2$, if $a=\pm i \kappa$ or $a=\pm i \kappa -1$. This estimate, \eqref{Whitt-to-par}, \eqref{Whitt}, and \eqref{Whitt.R.est} imply \eqref{para-cyl-est}.
\end{proof}

\begin{lemma}
\label{lemma:Whittaker}
The integral representation \eqref{Whitt} holds.
\end{lemma}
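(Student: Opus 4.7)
The plan is to derive \eqref{Whitt} from the classical Mellin--Barnes integral representation of the Whittaker function and then extract the first two terms of the asymptotic expansion by contour deformation. Concretely, I would begin from the representation
\[
W_{\kappa,\mu}(z) \;=\; \frac{e^{-z/2}\, z^{\kappa}}{2\pi i\, \Gamma(\tfrac{1}{2}-\mu-\kappa)\Gamma(\tfrac{1}{2}+\mu-\kappa)}
\int_{C}\Gamma(\zeta)\,\Gamma(-\zeta+\tfrac{1}{2}-\mu-\kappa)\,\Gamma(-\zeta+\tfrac{1}{2}+\mu-\kappa)\, z^{\zeta}\, d\zeta,
\]
where $C$ is a vertical line placed so that the poles of $\Gamma(\zeta)$ (at $\zeta=0,-1,-2,\dots$) lie to its left and the poles of the other two Gamma factors lie to its right. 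This formula is standard (Whittaker--Watson \cite{WW15}, or DLMF \S 13.14); its validity in the range $|\arg z|<3\pi/2$ follows from the exponential decay supplied by Stirling's formula on vertical lines together with the $e^{-z/2}$ prefactor.

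Specializing to $\kappa=\tfrac14+\tfrac{a}{2}$ and $\mu=-\tfrac14$, the arguments simplify to $\tfrac12-\mu-\kappa=\tfrac12-\tfrac{a}{2}$ and $\tfrac12+\mu-\kappa=-\tfrac{a}{2}$, and the representation becomes
\[
W_{\frac{1}{4}+\frac{a}{2},-\frac{1}{4}}(z)
=\frac{e^{-z/2}\, z^{\frac14+\frac{a}{2}}}{2\pi i\,\Gamma(\tfrac12-\tfrac{a}{2})\Gamma(-\tfrac{a}{2})}
\int_{C}\Gamma(\zeta)\,\Gamma(-\zeta+\tfrac12-\tfrac{a}{2})\,\Gamma(-\zeta-\tfrac{a}{2})\, z^{\zeta}\, d\zeta.
\]
Next I would push the contour $C$ to the left, to the vertical line $\mathrm{Re}(\zeta)=-\tfrac32$. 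For $a=\pm i\kappa$ or $a=\pm i\kappa-1$ with $\kappa$ real, the poles of the two rightmost Gamma factors have real parts $\ge 0$ (resp.\ $\ge\tfrac12$), so the only singularities of the integrand that are crossed are the simple poles of $\Gamma(\zeta)$ at $\zeta=0$ and $\zeta=-1$. The residue theorem then yields
\[
\int_{C}\!\cdots\, d\zeta \;=\; \int_{-i\infty-3/2}^{+i\infty-3/2}\!\cdots\, d\zeta
\;+\;2\pi i\bigl[\mathrm{Res}_{\zeta=0}+\mathrm{Res}_{\zeta=-1}\bigr].
\]
Using $\mathrm{Res}_{\zeta=0}\Gamma(\zeta)=1$ and $\mathrm{Res}_{\zeta=-1}\Gamma(\zeta)=-1$, a direct computation of the two residues produces exactly the terms $\Gamma(\tfrac12-\tfrac{a}{2})\Gamma(-\tfrac{a}{2})$ and $-\Gamma(\tfrac32-\tfrac{a}{2})\Gamma(1-\tfrac{a}{2})\,z^{-1}$, which after dividing by $\Gamma(\tfrac12-\tfrac{a}{2})\Gamma(-\tfrac{a}{2})$ reproduce the first two bracketed terms in \eqref{Whitt}; the surviving integral on $\mathrm{Re}(\zeta)=-\tfrac32$ is precisely $R(a,z)$ as given in \eqref{Whitt.R}.

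The only non-routine step is justifying the contour shift, i.e.\ showing that the contribution from the horizontal arcs at $\mathrm{Im}(\zeta)=\pm T$ tends to $0$ as $T\to\infty$. This is where I expect the only real work: one applies Stirling's asymptotic expansion $|\Gamma(x+iy)|\sim \sqrt{2\pi}\,|y|^{x-1/2}e^{-\pi|y|/2}$ for each of the three Gamma factors in the integrand, which altogether gives a factor $e^{-3\pi|y|/2}$, and combines this with the bound $|z^{\zeta}|=|z|^{\mathrm{Re}\,\zeta} e^{-y\arg z}$ valid in $|\arg z|<3\pi/2$. The resulting exponent $-\tfrac{3\pi}{2}|y|-y\arg z$ is negative provided $|\arg z|<\tfrac{3\pi}{2}$, which is the hypothesis under which \eqref{Whitt} is asserted. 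This confirms both the legitimacy of the deformation and the precise domain of validity, completing the proof.
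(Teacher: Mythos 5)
Your proposal is correct and follows essentially the same route as the paper: both start from the Mellin--Barnes representation DLMF 13.16.11, specialize to $k=\tfrac14+\tfrac a2$, $\mu=-\tfrac14$, shift the contour across the first two poles of the $\Gamma(\zeta)$ factor to pick up the $1$ and $-\Gamma(\tfrac32-\tfrac a2)\Gamma(1-\tfrac a2)/(\Gamma(\tfrac12-\tfrac a2)\Gamma(-\tfrac a2))\,z^{-1}$ terms, and identify the leftover line integral with $R(a,z)$. The only difference is cosmetic --- you work with $z^{\zeta}$ and shift left to $\Real\zeta=-\tfrac32$, whereas the paper keeps $z^{-\zeta}$, shifts right, and performs the change of variable $\zeta\mapsto-\zeta$ at the very end --- and your explicit Stirling justification of the contour shift only makes precise what the paper leaves implicit.
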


\begin{proof}
We begin with the following representation formula from \cite[(13.16.11)]{DLMF}:\footnote{\url{http://dlmf.nist.gov/13.16.E11}}
\begin{equation*}
W_{k,\mu}(z) = \frac{e^{-\frac{1}{2}z}}{2\pi i}
	\int_{-i \infty}^{+i \infty} 
			\frac	{\Gamma(\frac{1}{2}+ \mu + t)\Gamma(\frac{1}{2}-\mu+t)\Gamma(-k-t)}
					{\Gamma(\frac{1}{2}+ \mu - k)\Gamma(\frac{1}{2}-\mu-k)}
			z^{-t} \, dt
\end{equation*}
where the contour separates the poles of $\Gamma(\frac{1}{2}+ \mu + t)\Gamma(\frac{1}{2}-\mu+t)$
from those of $\Gamma(-k-t)$, and $|\arg(z)| < 3\pi/2$.  Thus, taking $k=\frac{1}{2}+\frac{a}{2}$
and $\mu=-\frac{1}{4}$, we obtain
\begin{equation*}
W_{\frac{a}{2}+\frac{1}{4}, -\frac{1}{4}}(z) = \frac{e^{-\frac{1}{2}z}}{2\pi i}
	\int_{-i \infty}^{+i \infty} 
			\frac	{\Gamma(\frac{1}{4} + t)\Gamma(\frac{3}{4}+t)\Gamma(-\frac{a}{2}-\frac{1}{4}-t)}
					{\Gamma(-\frac{a}{2})\Gamma(\frac{1}{2}-\frac{a}{2})}
			z^{-t} \, dt
\end{equation*}
We wish to set $t=\zeta-\left(\frac{1}{4}+\frac{a}{2}\right)$. If $a = \pm i\kappa$ this contour shift can be made without picking up contributions from poles. We recover
\begin{multline*}
W_{\frac{a}{2}+\frac{1}{4}, -\frac{1}{4}}(z) = \frac{e^{-\frac{1}{2}z} z^{\frac{1}{4}+\frac{a}{2}}}{2\pi i}
	\times\\
	\frac{1}{\Gamma(-\frac{a}{2})\Gamma(\frac{1}{2}-\frac{a}{2})}
	\int_{-i\infty}^{+i \infty} 
		\Gamma\left(\zeta-\frac{a}{2}\right) 
		\Gamma\left(\frac{1}{2}+ \zeta -\frac{a}{2}\right)
		\Gamma(-\zeta) z^{-\zeta} \, d\zeta
\end{multline*}
We can now obtain a large-$z$ expansion by shifting the contour to the right. We will pick up poles at $\zeta=0,1,\cdots$ depending on how far we shift. It is easy to compute the residues of the integrand at $\zeta=0$ and $\zeta=1$ using the facts that 
$\Gamma(-\zeta)=\Gamma(1-\zeta)/(-\zeta) = \Gamma(2-\zeta)/(-\zeta(1-\zeta))$. Note that the residues get multiplied by $-1$ in the computations since we shift the contour to the right. We then obtain
\begin{align*}
W_{\frac{a}{2}+\frac{1}{4}, -\frac{1}{4}}(z) 
	&= 	\frac{e^{-\frac{1}{2}z} z^{\frac{1}{4}+\frac{a}{2}}}{2\pi i} \times \\
	&		
			\left( 1 - 
				\frac	{\Gamma(1-\frac{a}{2})\Gamma(\frac{3}{2}-\frac{a}{2})}
						{\Gamma(-\frac{a}{2})\Gamma(\frac{1}{2}-\frac{a}{2})}
						\frac{1}{z}
			\right. \\
	&		-
			\left.
				\frac{1}{\Gamma(-\frac{a}{2})\Gamma(\frac{1}{2}-\frac{a}{2})}
					\int_{-i\infty+\frac{3}{2}}^{+i \infty+\frac{3}{2}}
						\Gamma\left(\zeta-\frac{a}{2}\right) 
						\Gamma\left(\frac{1}{2}+ \zeta -\frac{a}{2}\right)
						\Gamma(-\zeta) z^{-\zeta} \, d\zeta
			\right)
\end{align*}
A trivial change of variable gives \eqref{Whitt.R}.
\end{proof}

%
%

\newpage

\section{Figures}
\label{app:figures}

{
\SixMatrix{The Matrix  $\calR^{(2)}$ for $t>0$, $\pm x>0$}{fig:R.++.+-}
	{\twomat{1}{0}{R_1 e^{-2it\theta}}{1}}
	{\twomat{1}{R_3  e^{2it\theta}}{0}{1}}
	{\twomat{1}{0}{R_4e^{-2it\theta}}{1}}
	{\twomat{1}{R_6e^{2it\theta}}{0}{1}}
}

{
\SixMatrix{The Matrix  $\calR^{(2)}$ for $t<0$, $\pm x>0$ (note that $t'=-t$)}{fig:R.-+.--}
	{\twomat{1}{R_1e^{-2it'\theta}}{0}{1}}
	{\twomat{1}{0}{R_3e^{2it'\theta}}{1}}
	{\twomat{1}{R_4e^{-2it'\theta}}{0}{1}}
	{\twomat{1}{0} {R_6e^{2it'\theta}}{1}}
}

\newpage

\JumpMatrixLeftCut{The Jump Matrix $V^{(2)}_{0}$ for $t>0$, $x>0$}{fig:V20++}
	{\Twomat{1}{0}{\xi \overline{r_\xi}}{1}}
	{\Twomat{1}{-\dfrac{r_\xi}{1-\xi|r_\xi|^2}}{0}{1}}
	{\Twomat{1}{0}{\dfrac{-\xi \overline{r_\xi}}{1-\xi |r_\xi|^2}}{1}}
	{\Twomat{1}{r_\xi}{0}{1}}

\JumpMatrixRightCut{The Jump Matrix $V^{(2)}_{0}$ for $t>0$, $x<0$}{fig:V20+-}
	{\Twomat{1}{0}{\dfrac{\xi\overline{\br_\xi}}{1-\xi|\br_\xi|^2}}{1}}
	{\Twomat{1}{-\br_\xi}{0}{1}}
	{\Twomat{1}{0}{-\xi\overline{\br_\xi}}{1}}
	{\Twomat{1}{\dfrac{\br_\xi}{1-\xi|\br_\xi|^2}}{0}{1}}

\JumpMatrixRightCut{The Jump Matrix $V^{(2)}_0$ for $t<0$, $x>0$}{fig:V20-+}
	{\Twomat{1}{\dfrac{-r_\xi}{1-\xi|r_\xi|^2}}{0}{1}}
	{\Twomat{1}{0}{\xi\overline{r_\xi}}{1}}
	{\Twomat{1}{r_\xi}{0}{1}}
	{\Twomat{1}{0}{\dfrac{-\xi\overline{r_\xi}}{1-\xi|r_\xi|^2}}{1}}

\JumpMatrixLeftCut{The Jump Matrix $V^{(2)}_0$ for $t<0$, $x<0$}{fig:V20--}
	{\Twomat{1}{-\br_\xi}{0}{1}}
	{\Twomat{1}{0}{\dfrac{\xi \overline{\br_\xi}}{1-\xi|\br_\xi|^2}}{1}}
	{\Twomat{1}{\dfrac{\br_\xi}{1-\xi|\br_\xi|^2}}{0}{1}}
	{\Twomat{1}{0}{-\xi \overline{\br_\xi}}{1}}

\SixMatrix{The Matrix $P$ in terms of the Jump Matrix $V^{(2)}_0$, where $V_i = \left. V^{(2)}_0 \right|_{\Sigma_i}$}	
	{fig:V.to.P}
	{V_1^{-1}}
	{V_2^{-1}}
	{V_3}
	{V_4}
	
%
%

\FloatBarrier

\bigskip\noindent
 {\bf Acknowledgments.} We thank R. Jenkins and K. McLaughlin for useful discussions, and for sharing with us their recent preprint \cite{BJM16}  with M. Borghese. J. L. and P. P. thank the Department of Mathematics at the University of Toronto and the Fields Institute for hospitality during part of the time that this work was done.

\end{document}